\documentclass[11pt]{amsart}


\voffset=5mm
\oddsidemargin=5pt \evensidemargin=5pt
\headheight=9pt     \topmargin=-24pt
\textheight=655pt   \textwidth=463.pt

\usepackage[PostScript=dvips]{diagrams}

\usepackage{tikz-cd}
\usepackage{amsmath,amstext,amscd, amssymb}
\usepackage{enumerate}
\usepackage{epsfig, psfrag, color, multicol, graphicx}
\usepackage{hyperref} 
\hypersetup{colorlinks} 

\newcommand{\ra}{\rangle}
\def\<{\langle}
\def\>{\rangle}
\def\cG{\mathcal G}
\def\.{\hskip.06cm}
\def\ts{\hskip.03cm}

\def\ssu{\subset}

\def\wt{\widetilde}
\def\rT{{\mathbf{T}}}
\def\zz{\mathbb Z}
\def\zzz{{{\zz_2}}}
\def\nn{\mathbb N}

\def\rr{\mathbb R}
\def\qqq{\mathbb Q}

\DeclareMathOperator{\PSL}{PSL}
\def\ii{{\text{\textbf{\em i}}}}

\newcommand{\toto}{{\ts\twoheadrightarrow\ts}}
\DeclareMathOperator{\bp}{{{{ \bigotimes \ts }}}}

\def\op{{{{ \ts \otimes \ts }}}}

\def\Ga{\Gamma}

\def\Si{\Sigma}
\def\ga{\gamma}
\def\si{\sigma}

\def\al{\alpha}
\def\be{\beta}
\def\om{\omega}
\def\k{\kappa}

\def\ve{\varepsilon}

\def\ff{{\rm \textbf{F}}}
\newcommand{\F}{\ff}

\theoremstyle{plain}

\newtheorem{thm}{Theorem}[section]
\newtheorem{lemma}[thm]{Lemma}

\newtheorem{cor}[thm]{Corollary}
\newtheorem{prop}[thm]{Proposition}

\newtheorem{conj}[thm]{Conjecture}
\newtheorem{question}[thm]{Question}

\newtheorem{clm}[thm]{Claim}
\theoremstyle{remark}
\newtheorem{ex}[thm]{Example}

\newtheorem{rem}[thm]{Remark}

\theoremstyle{definition}
\newtheorem{defn}[thm]{Definition}

\newtheorem{Open questions}[thm]{Open questions}
\newtheorem{Open question}[thm]{Open question}
\newtheorem{Open problems}[thm]{Open problems}
\newtheorem{Open problem}[thm]{Open problem}
\newtheorem{OP}[thm]{Open Problem}

\newcommand{\bG}{\mathbf{G}}

\DeclareMathOperator{\Aut}{Aut}

\usepackage[colorinlistoftodos]{todonotes}

\RequirePackage{cleveref}
\usepackage{hypcap}
\hypersetup{colorlinks=true, citecolor=darkblue, linkcolor=darkblue}
\definecolor{darkblue}{rgb}{0.0,0,0.7}

\definecolor{darkred}{rgb}{0.68,0,0}

\definecolor{darkgreen}{rgb}{0,.38,0}


\def\emp{\nothing}

\def\zz{\mathbb Z}
\def\nn{\mathbb N}

\def\rr{\mathbb R}
\def\qqq{\mathbb Q}

\def\ff{\mathbb F}

\def\pp{\mathbb P}

\def\ov{\overline}

\def\sm{\smallsetminus}

\def\Ga{\Gamma}

\def\Si{\Sigma}

\def\ga{\gamma}
\def\si{\sigma}

\def\al{\alpha}
\def\be{\beta}
\def\om{\omega}
\def\k{\kappa}

\def\ve{\varepsilon}

\def\cH{\mathcal H}

\def\ssu{\subset}

\def\wt{\widetilde}
\def\<{\langle}
\def\>{\rangle}

\def\rH{ {\text {\rm H} } }

\def\rT{{\text {\rm T} } }

\def\site{{\text {\rm s} } }
\def\bond{{\text {\rm b} } }

\def\0{{\mathbf 0}}

\def\nothing{\varnothing}

\def\.{\hskip.06cm}
\def\ts{\hskip.03cm}

\newcommand{\supp}{\mathrm{supp}}


\def\.{\hskip.06cm}
\def\ts{\hskip.03cm}


\title[Monotone parameters on Cayley graphs]{Monotone parameters on Cayley graphs \\ of finitely generated groups}
\author[Martin~Kassabov]{ \ Martin~Kassabov$^\star$}

\author[Igor~Pak]{ \ Igor~Pak$^\diamond$}

\thanks{\thinspace ${\hspace{-.45ex}}^\star$Department of Mathematics, Cornell University, Ithaca, NY 14853, USA;
\texttt{martin.kassabov@gmail.com}}

\thanks{\thinspace ${\hspace{-.45ex}}^\diamond$Department of Mathematics,
UCLA, Los Angeles, CA 90095, USA;
\texttt{pak@math.ucla.edu}}

\date \today

\begin{document}

\begin{abstract}
We construct a new large family of finitely generated groups with
continuum many values of the following monotone parameters:
spectral radius, critical probabilities, and asymptotic entropy.
We also present several open problems on other monotone parameters.
\end{abstract}

\maketitle

\vskip.6cm

\section{Introduction}\label{s:intro}

\smallskip

\subsection{Main results}\label{ss:intro-main}
There are several probabilistic parameters of Cayley graphs
of finitely generated groups
that capture ``global properties'' of the groups, i.e.,\ independent
of the generating sets (see below and Section~\ref{s:other}).
Typically, these parameters are \emph{monotone}:
as groups get ``larger'' the parameters increase/decrease, usually in
a difficult to control way.

While there is an extensive literature on bounds for these parameters
and their relations to each other, computing them remains challenging
and the exact values are known in only few examples and special families.
Our main result shows that these parameters can be as unwieldy as the finitely
generated group themselves.

\smallskip

\begin{thm}[Main theorem] \label{t:main}
Let $G=\<S\>$ be a finitely generated group, and let $f(G,S)$ denote
one of the following parameters:

\smallskip

$\circ$ \ spectral radius \ts $\rho(G,S)$,

$\circ$ \ asymptotic entropy \ts $h(G,S)$,

$\circ$ \ site percolation critical probability \ts $p^\site_c(G,S)$,

$\circ$ \  bond percolation critical probability \ts $p^\bond_c(G,S)$.

\smallskip

\noindent
Then, there is a family of \ts $4$-regular Cayley graphs
\. $\big\{\ts \text{\em Cay}(G_\om,S_\om)\ts\big\}$, such that the set of  parameter values \.
$\big\{f(G_\om,S_\om)\big\}$ \. has cardinality of the continuum.
\end{thm}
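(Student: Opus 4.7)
The plan is to construct, for each of the four parameters $f$, a family of $2$-generated groups $\{G_\omega\}_{\omega\in\{0,1\}^{\mathbb{N}}}$ with involutive generators (so that every Cayley graph is $4$-regular), and to show that $f(G_\omega,S_\omega)$ takes continuum many distinct values as $\omega$ ranges over a Cantor subset. The natural ambient space is the space of marked groups on two involutive generators, equipped with the Chabauty topology (equivalently, rooted $4$-regular Cayley graphs in the Gromov--Hausdorff topology). The construction itself should be Grigorchuk-flavored: either an iterated branch-group construction in which each coordinate $\omega_i$ selects a local action at depth $i$, or a chain of quotients $G_\omega=F/N_\omega$ of a fixed $2$-generated group $F$ by the normal closure of a bit-selected relator sequence. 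The decisive feature to arrange is that $\omega\mapsto \text{\em Cay}(G_\omega,S_\omega)$ is continuous from Cantor space into the marked Cayley graph space: whenever $\omega$ and $\omega'$ agree on their first $k$ coordinates, the associated rooted Cayley graphs agree on a ball whose radius tends to infinity with $k$.

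Given such a family, the second step is to verify that each of the four parameters is continuous (or at worst upper/lower semi-continuous) on the Chabauty space of $4$-regular Cayley graphs. For the spectral radius $\rho$ this is immediate from Kesten's formula $\rho(G,S)=\limsup_n p_{2n}(e,e)^{1/2n}$, since each return probability is determined by loops inside a finite ball. For the asymptotic entropy $h$ one uses the Kaimanovich--Vershik formula together with a local truncation argument. For the site and bond percolation thresholds $p_c^\site$ and $p_c^\bond$, the needed continuity on Benjamini--Schramm limits is by now standard (see e.g.\ work of Benjamini, Nachmias, Peres, and Schramm). Combined with the local-agreement property of the family, each of these gives continuity of $\omega\mapsto f(G_\omega,S_\omega)$ on the Cantor space.

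The heart of the argument is a non-triviality statement: on every cylinder $[\omega_1\cdots\omega_k]$ the function $\omega\mapsto f(G_\omega,S_\omega)$ is non-constant. The natural route is a quantitative monotonicity comparison in which flipping a single bit either forces a strict quotient (along which $\rho$, $p_c^\site$, $p_c^\bond$ weakly increase and $h$ weakly decreases by standard monotonicity) or else produces a strict isoperimetric/spectral gap. Once such non-triviality is in hand on every cylinder, a standard inductive bisection of cylinders extracts a Cantor subset $T\subset\{0,1\}^{\mathbb{N}}$ on which $f$ is injective, yielding $2^{\aleph_0}$ distinct values. The main obstacle is precisely this quantitative non-triviality: showing that a local modification of the defining sequence produces a \emph{detectable} change in the global spectral, entropic, or percolative parameter. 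This is likely to require different tools tailored to each parameter --- Kesten/Sobolev-type arguments for $\rho$, Kaimanovich--Vershik tail estimates for $h$, and coupling or Russo-type surgery for $p_c^\site$ and $p_c^\bond$ --- so it is plausible that the paper treats the four parameters with carefully coordinated, or slightly different, constructions within a common framework.
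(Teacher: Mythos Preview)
Your overall architecture --- a Chabauty-continuous family indexed by a Cantor set, monotonicity under quotients, and an inductive bisection to extract an injective subfamily --- matches the paper's. The genuine gap is the step you yourself flag as ``the main obstacle'': showing that a local bit-flip produces a \emph{strict} change in each parameter. You propose parameter-specific attacks (Sobolev bounds for $\rho$, tail estimates for $h$, Russo-type surgery for $p_c$), but this is both vaguer and harder than necessary, and weak monotonicity under quotients is certainly not enough (e.g.\ $\rho(\zz)=\rho(\zz/n)=1$). The idea you are missing is that strict monotonicity of all four parameters under a quotient $G\twoheadrightarrow G/N$ is already a theorem \emph{provided the kernel $N$ is nonamenable}: Kesten for $\rho$, Kaimanovich for $h$, Martineau--Severo for $p_c^\site$ and $p_c^\bond$. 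The paper therefore engineers the family so that for $J\subsetneq J'$ the surjection $G_{J'}\twoheadrightarrow G_J$ has nonamenable kernel; concretely, $G_J$ is a diagonal (marked) product of Grigorchuk-type iterates $F^i_\omega(\cH)$ applied to a fixed nonamenable linear group $\cH\subset\PSL(2,\zz[\ii,1/2])$, and one exhibits an explicit nonamenable subgroup sitting inside each kernel. Once this is arranged, the ``non-triviality'' step is a one-line citation, uniformly across all four parameters --- no tailored analytic arguments are needed.

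Two smaller points. First, full continuity of $\rho$, $h$, $p_c$ on the Chabauty space fails (finite quotients of $\F_k$ have $\rho=1$ yet converge to $\F_k$); only the semi-continuous direction holds, and the paper combines it with the quotient inequality to squeeze the limit (this is what forces passing to a sparse subset $M\subset\nn$ before running the bisection). Second, two \emph{involutive} generators give a $2$-regular Cayley graph, hence a quotient of the infinite dihedral group --- all such groups are virtually cyclic and useless here. The paper works with four involutions $a,b,c,d$ subject to $bcd=1$, i.e.\ quotients of $\cG=\zz_2\ast\zz_2^2$, which is what makes the Cayley graphs $4$-regular.
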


The construction of this family of Cayley graphs is based on
properties of an uncountable family of \emph{decorated Grigorchuk groups},
the setting we previously considered in~\cite{KP} and related to the
approach in~\cite{TZ19}, see~$\S$\ref{ss:finrem-hist}.
See also $\S$\ref{ss:finrem-small} for an alternative approach
to the theorem.  We also obtain the following result of independent interest.

\smallskip

\begin{thm}\label{t:isol}
In notation of Theorem~\ref{t:main}, for \ts $k\ge 3$ \ts the sets \.
$$
X_{\rho,k} \. := \. \big\{\rho(G,S) \. : \. |S|=k \big\}
$$
have no isolated points on \ts $[\al_k,1]$, \ts where \.
$\al_k :=\frac{2\sqrt{k-1}}{k}$\..
\end{thm}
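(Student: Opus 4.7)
The plan is to deduce Theorem~\ref{t:isol} from a continuous-parameter extension of the construction used in Theorem~\ref{t:main}, in three stages.

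\emph{Stage~1: Extension to general $k \geq 3$.} The construction underlying Theorem~\ref{t:main} produces $4$-regular Cayley graphs. To reach degree $k$, I would modify the decorated Grigorchuk family: for $k \geq 5$, by taking free products with free groups of suitable rank (augmenting the symmetric generating set to size $k$); for $k = 3$, by passing to a three-generated self-similar variant. This produces, for each $k \geq 3$, a family $\{(G_\omega, S_\omega)\}_{\omega \in \Omega}$ of $k$-regular Cayley graphs parameterized by a Cantor space $\Omega$.

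\emph{Stage~2: Continuity and perfectness.} Next, I would establish continuity of the map $\omega \mapsto \rho(G_\omega, S_\omega)$ on $\Omega$. The self-similar structure of the decorated Grigorchuk family gives that if $\omega, \omega'$ agree on a long prefix, then large balls around the identity in the two Cayley graphs coincide, so all short-time return probabilities agree; combined with standard tail estimates on $p_{2n}(e,e)^{1/2n}$, this yields continuity of $\rho$. Combining continuity with the non-local-constancy provided by Theorem~\ref{t:main} (continuum-many values are achieved on every sufficiently rich subcylinder) shows that the image $\rho(\Omega) \subset (\alpha_k, 1]$ is a compact perfect set, and so contains no isolated points in $(\alpha_k, 1]$.

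\emph{Stage~3: Arbitrary $\rho_0 \in X_{\rho,k} \cap (\alpha_k, 1]$.} For $\rho_0 \in \rho(\Omega)$, Stage~2 already shows $\rho_0$ is non-isolated in $X_{\rho,k}$. For $\rho_0$ realized by some Cayley graph $(G, S)$ outside the family, the idea is to approximate $(G, S)$ by distinct $k$-regular Cayley graphs with spectral radii tending to $\rho_0$. One approach uses proper quotients: for a descending chain $\{N_n\}$ of normal subgroups of $G$ with trivial intersection, the Cayley graphs of $G/N_n$ with respect to the image of $S$ satisfy $\rho(G/N_n) \geq \rho(G, S) = \rho_0$, while upper semicontinuity of $\rho$ on the space of marked $k$-generated groups forces $\rho(G/N_n) \to \rho_0$; choosing the $N_n$ so that infinitely many inequalities are strict yields accumulation at $\rho_0$. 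When $G$ has insufficient proper quotients, one can instead graft a shrinking Grigorchuk-type perturbation onto $(G, S)$ via an HNN extension or amalgamated product parameterized by a Cantor space, producing a continuous family of marked groups converging to $(G, S)$ with spectral radii accumulating at $\rho_0$.

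The main obstacle is Stage~3: a uniform perturbation principle that works for arbitrary $(G, S)$, including rigid groups such as infinite simple ones, and that produces a non-constant family of neighboring $k$-regular Cayley graphs with spectral radii converging to $\rho_0$. Establishing strict inequalities (to get genuinely distinct values, not merely coincidences with $\rho_0$) and convergence (via semicontinuity or direct spectral estimates for the grafted family) are the key technical hurdles of the argument.
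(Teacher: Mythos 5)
There is a genuine gap, and it sits exactly where you locate it: Stage~3. The theorem is a statement about \emph{all} of $X_{\rho,k}$, so Stages~1--2 (a $k$-generated Cantor family and perfectness of its image) do not address the actual claim; the whole content is the perturbation principle for an arbitrary marked group $(G,S)$ with $\rho(G,S)>\al_k$, which you name as the ``main obstacle'' but do not supply. Moreover, both of your candidate mechanisms for Stage~3 fail. The quotient approach $G/N_n\to G$ approaches $\rho_0$ from \emph{above}, and the claim that ``upper semicontinuity of $\rho$ forces $\rho(G/N_n)\to\rho_0$'' is false in that direction: the spectral radius is only semicontinuous in the sense of \eqref{eq:mono-limit} ($\limsup$ of the approximants is $\ge$ the limit value), and the paper's Example~\ref{e:finite} (finite quotients $\PSL(2,\zz_p)$ converging to the virtually free $\PSL(2,\zz)$, with $\rho\equiv 1$ along the sequence) shows the quotients' spectral radii need not converge to $\rho(G,S)$; in addition the method is vacuous for simple or just-infinite groups. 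The HNN/amalgam grafting fares no better: those constructions do not come with marked-group surjections onto $(G,S)$ preserving the generating set, so neither monotonicity \eqref{eq:mono} nor Kesten's strict inequality (Lemma~\ref{l:radius-strict}) can be invoked to compare or to separate the perturbed spectral radii from $\rho_0$.

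The paper's resolution is to perturb from \emph{below} using the product of marked groups: set $\Gamma_i=\Gamma\otimes H_i$, which by construction surjects onto $\Gamma=(G,S)$ with kernel contained in $H_i$. Lemma~\ref{lm:converges} produces nonamenable $k$-generated $H_i$ converging in the Chabauty topology to an amenable $H$, with $\ker(\Gamma\otimes H_i\toto\Gamma)$ nonamenable; the only hypothesis is that $\Gamma$ is not free as a marked group (guaranteed here since $\rho>\al_k$), which makes $\ker(\F_k\toto\Gamma)$ nontrivial and lets one route a relation of $\Gamma$ onto a normal generator of a nonamenable subgroup of $F^i_{(012)^\infty}(\cH)$. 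Then Kesten's ``if and only if'' (sharpness) gives $\rho(\Gamma_i)<\rho(\Gamma)$ for each $i$ and $\rho(\Gamma\otimes H)=\rho(\Gamma)$ for the limit, while \eqref{eq:mono-limit} gives $\limsup_i\rho(\Gamma_i)\ge\rho(\Gamma\otimes H)$, so the values accumulate at $\rho_0$ from below. This is the uniform principle you were missing, and it works for rigid and simple groups alike because the perturbation happens in the free group, not in $G$.
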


\smallskip

Note that \ts $X_{\rho,k} \subseteq [\al_k,1]$, so the theorem implies that \ts $X_{\rho,k}$ \ts
has no isolated points, except possibly at~$\al_k$\ts.
Here \. $\al_k$ \. is the spectral radius of the infinite $k$-regular tree, which can be
viewed as the Cayley graph of a free product of \ts $k$ \ts copies of \ts $\zz_2$\ts.
A version of Theorem~\ref{t:isol} for the asymptotic entropy was given
by Tamuz and Zheng in \cite{TZ19}.

\smallskip

\subsection{Monotone parameters} \label{ss:intro-mono}
Let \ts $f: \{(G,S)\} \to \rr$ \ts be a function on \emph{marked groups},
i.e.,\ pairs of finitely generated groups~$G$ and finite symmetric
generating sets \ts $S=S^{-1}$.
We refer to $f$ as \emph{parameters} (see the discussion in~$\S$\ref{ss:finrem-par}).
%
We say that $f$ is \emph{decreasing} \ts if the following conditions hold:
\begin{equation}\label{eq:mono}\tag{$\ast$}
f(G/N,S') \, \ge \, f(G,S) \qquad \text{for all \ \ $N \lhd G$.}
\end{equation}
\begin{equation}\label{eq:mono-limit}\tag{$\ast\ast$}
(G_n,S_n) \. \underset{n\to \infty}{\to} \. (G,S) \ \. \Longrightarrow \. \ \limsup_{n\to \infty} \. f(G_n,S_n) \. \ge \. f(G,S).
\end{equation}
Here \ts $N \lhd G$ \ts denotes a normal subgroup~$N$ of~$G$,
and $S'$ is a projection of~$S$ onto~$G/N$.  In~\eqref{eq:mono-limit}, the
convergence on the left\footnote{The
inequality~\eqref{eq:mono-limit} states that \ts $f$ \ts is \emph{semi-continuous},
see Lemma~\ref{lm:semicontunuous}; we include it in the definition
to emphasize the direction of the inequality.}
is in the Chabauty topology for marked groups,
see~$\S$\ref{ss:marked-limits}.

We say that $f$ is \emph{increasing} if the inequalities \eqref{eq:mono} and~\eqref{eq:mono-limit}
are reverted.  We say that $f$ is \emph{monotone} if it is either increasing or decreasing.
We say that $f$ is \emph{strictly decreasing} (respectively, \emph{strictly increasing} and
\emph{strictly monotone}) if the inequality~\eqref{eq:mono} is strict,
provided that $N$ is nonamenable.  Similarly, we say that $f$ is \emph{sharply decreasing}
(respectively, \emph{sharply increasing} and
\emph{sharply monotone}) if the inequality~\eqref{eq:mono} is strict \ts
{if and only if} \. $N$ \ts is nonamenable (see also~$\S$\ref{ss:finrem-par}).

\subsection{Spectral radius}\label{ss:intro-radius}
Let \ts $\text{Cay}(G,S)$ \ts denote the Cayley graph of a finitely generated
group $G=\<S\>$.    The \emph{spectral radius} \ts $\rho(G,S)$ \ts
is defined as:
$$
\rho(G,S) \,  := \, \limsup_{n\to \infty} \. \frac{1}{|S|} \sqrt[n]{c(n)} \.,
$$
where the \emph{cogrowth sequence} \ts $\big\{c(n)=c(G,S; \ts n)\big\}$ \ts is the number of words in the alphabet $S$ which are equal to
the identity~$e$ in~$G$.
Equivalently, this is the number of loops in \ts $\text{Cay}(G,S)$ \ts of length~$n$,
starting and ending at~$e$.

Famously, it was shown by Kesten~\cite{Kes},
that $\rho(G,S) =1$ if and only if $G$ is nonamenable, and that
$$
\al_k \. = \, \tfrac{2\. \sqrt{k-1}}{k} \, \le \,
\rho(G,S) \, \le \. 1\,, \quad \text{where} \ \ k=|S|.
$$
For \ts $k=2m$, the lower bound is 
attained on a
free group \. $\F_m=\<x_1^{\pm 1},\ldots,x_m^{\pm 1}\>$.

Unfortunately, relatively little is known about the spectral radius
in full generality, beyond these basic inequalities.  Notably,
it is open whether \emph{every} \ts $\al \in (0,1]$ \ts is a spectral
radius of \emph{some} \ts finitely generated group, cf.~\cite[Question~4.2]{BLM}.
On the other hand, until this paper it was not known if there is a single example of a
group with a transcendental spectral radius, a problem discussed
in~\cite[$\S$2.4]{Pak18}.

For several families of nonamenable groups, the exact value of $\rho(G,S)$ is known,
see e.g.~\cite{GH,Woe} (see also~\cite{BLM,ERVW,Kuk99}).
In all these cases the spectral radii are algebraic.
Let  \ts $\Ga_2$ \ts denote the surface group of genus~$2$, i.e.
$$
\Ga_2 \, = \, \<a_1,a_2,b_1,b_2 \. \mid \. a_1b_1a_1^{-1}b_1^{-1}a_2b_2a_2^{-1}b_2^{-1} = 1\>.
$$
After much effort, it was shown that
\. $0.6624 \le \rho(\Ga_2,S) \le 0.6629$, where \ts $S$ \ts
are standard generators as above (see \cite[$\S$7]{GH} and references
therein).  Sarnak's question whether this spectral radius is transcendental
remains unresolved (ibid.)

\begin{prop}\label{p:radius-mono}
Spectral radius \. $\rho: \{(G,S)\} \to (0,1]$ \. is a decreasing parameter.
\end{prop}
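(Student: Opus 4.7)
The plan is to verify the two defining conditions \eqref{eq:mono} and~\eqref{eq:mono-limit} directly from the combinatorial definition of cogrowth. For the quotient inequality~\eqref{eq:mono}, fix a normal subgroup $N \lhd G$, let $\pi : G \to G/N$, and put $S' = \pi(S)$. Every word $w = s_{i_1} \cdots s_{i_m}$ in the alphabet $S$ that evaluates to $e$ in $G$ projects to a word that evaluates to $e$ in $G/N$, so $c(G,S;m) \le c(G/N, S'; m)$ for all $m$. Passing to the $\limsup$ in $m$ gives $\rho(G,S) \le \rho(G/N, S')$, which is \eqref{eq:mono} with the inequality going the right way for a decreasing parameter.

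The key auxiliary fact for condition~\eqref{eq:mono-limit} is supermultiplicativity of the cogrowth sequence: since the concatenation of a loop at $e$ of length $m$ with a loop at $e$ of length $n$ is again a loop at $e$, one has $c(G,S;m+n) \ge c(G,S;m) \. c(G,S;n)$ for all $m,n$ (with $c > 0$). Fekete's lemma applied to $\log c(G,S;m)$, along any subsequence where $c$ is positive, therefore upgrades the $\limsup$ in the definition of $\rho$ to a sup-formula
$$
\rho(G,S) \, = \, \sup_{m \, : \, c(G,S;m) > 0} \. \frac{c(G,S;m)^{1/m}}{|S|}\.,
$$
and in particular $\rho(G,S)$ is approximated to arbitrary accuracy by a single term $c(G,S;m_0)^{1/m_0}/|S|$ with $m_0$ finite.

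Now assume $(G_n,S_n) \to (G,S)$ in the Chabauty topology, with $k = |S|$ and $k = |S_n|$ for all $n$. By definition this means that the kernels $\Ker\bigl(F_k \twoheadrightarrow G_n\bigr)$ converge to $\Ker\bigl(F_k \twoheadrightarrow G\bigr)$ in the space of subgroups of the free group on $k$ generators, so for every fixed $m$ and all $n$ large enough (depending on $m$) one has $c(G_n, S_n; m) = c(G, S; m)$; this uses only that the ball of radius $m$ in the free group is finite. Given $\ve > 0$, choose $m_0$ with $c(G,S;m_0)^{1/m_0}/k > \rho(G,S) - \ve$. The sup-formula applied to each $(G_n,S_n)$ then gives, for all $n$ sufficiently large,
$$
\rho(G_n, S_n) \, \ge \, \frac{c(G_n, S_n; m_0)^{1/m_0}}{k} \, = \, \frac{c(G,S;m_0)^{1/m_0}}{k} \, > \, \rho(G,S) - \ve.
$$
Letting $n \to \infty$ first and then $\ve \to 0$ yields $\liminf_{n} \rho(G_n,S_n) \ge \rho(G,S)$, which is strictly stronger than the required $\limsup_{n} \rho(G_n,S_n) \ge \rho(G,S)$ of~\eqref{eq:mono-limit}.

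No step here is genuinely hard; the only point worth flagging is that the direction of \eqref{eq:mono-limit} must be compatible with \eqref{eq:mono}, and the supermultiplicative sup-formula is exactly what converts the per-scale identity $c(G_n,S_n;m_0) = c(G,S;m_0)$ (which Chabauty convergence furnishes only for one $m_0$ at a time) into a uniform lower bound on the $\limsup$-defined quantity $\rho(G_n,S_n)$.
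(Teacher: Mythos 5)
Your argument is correct and matches the paper's proof exactly: the quotient inequality follows by projecting trivial words, and the semicontinuity \eqref{eq:mono-limit} is obtained from the supermultiplicativity $c(m+n)\ge c(m)\ts c(n)$ via Fekete's lemma, which converts the $\limsup$ into a supremum over finite scales that Chabauty convergence controls one scale at a time. Your handling of the parity/vanishing issue (restricting to the additive semigroup where $c>0$) is a correct and welcome detail that the paper leaves implicit.
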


Indeed, the property~(\ref{eq:mono}) is straightforward, while the property~(\ref{eq:mono-limit})
uses the fact that \ts $\rho(G,S)$ \ts is a limit of a supermultiplicative sequence: \ts $c(n+m) \geq c(n)c(m)$.
 The following result by Kesten
shows that the spectral radius is sharply decreasing:

\begin{lemma}[{\rm \cite[Lemma~3.1]{Kes}}]
\label{l:radius-strict}
Let \ts $G=\<S\>$ \ts and let $N$ be a 
normal subgroup of~$G$ and $S'$ be the projection of $S$ onto~$G/N$.
Then \ts $\rho(G,S) < \rho(G/N,S')$ \. \underline{if and only if} \.
$N$ \ts is nonamenable.
\end{lemma}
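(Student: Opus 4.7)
The plan is to reformulate the spectral radius as an operator norm and invoke Hulanicki's weak-containment characterization of amenability. Writing $M_G := \frac{1}{|S|}\sum_{s\in S}\lambda_G(s)$ for the Markov operator on $\ell^2(G)$ and $M_{G/N}$ similarly on $\ell^2(G/N)$, one has $\rho(G,S) = \|M_G\|$ and $\rho(G/N,S') = \|M_{G/N}\|$. The inequality $\|M_G\| \le \|M_{G/N}\|$ is already Proposition~\ref{p:radius-mono}, so the task reduces to: (i) equality when $N$ is amenable, and (ii) strict inequality when $N$ is nonamenable.

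For (i), I would use induction of representations. Hulanicki's theorem gives $1_N \prec \lambda_N$ (weak containment of the trivial representation in the regular representation), and induction from $N$ to $G$ preserves weak containment, so
\[
\lambda_{G/N} \;\cong\; \mathrm{Ind}_N^G(1_N) \;\prec\; \mathrm{Ind}_N^G(\lambda_N) \;\cong\; \lambda_G,
\]
where the first isomorphism uses $N \lhd G$. Applied to the self-adjoint element $\frac{1}{|S|}\sum_{s \in S} s$ of the group algebra, weak containment gives $\|M_{G/N}\| \le \|M_G\|$, and hence equality.

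For (ii), I would decompose a simple random walk $(X_n)$ on $G$ (with step distribution $\mu = \frac{1}{|S|}\sum_{s \in S}\delta_s$) by its successive visits to $N$. Between consecutive visits the process performs an excursion in $G \setminus N$; summing over excursions produces a Green's-function identity relating $\sum_n \mu^{*n}(e_G)\,z^n$ to the Green's function of the projected walk on $G/N$ and that of the induced random walk $\nu$ on $N$ whose step distribution is the law of the first return of $(X_n)$ to $N$. A short connectivity argument shows $\nu$ is symmetric and generates $N$, so Kesten's amenability criterion applied to $N$ yields $\|M_N(\nu)\| < 1$. This spectral gap on $N$ translates through the Green's-function identity into a strict gap between the radii of convergence of the cogrowth generating functions $\sum_n c(n) z^n$ and $\sum_n c'(n) z^n$, that is, $\rho(G,S) < \rho(G/N,S')$.

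The main obstacle is step (ii): setting up the excursion decomposition rigorously and turning the qualitative inequality $\|M_N(\nu)\| < 1$ into a quantitative separation of the two radii via the Green's-function identity. An alternative that avoids excursion bookkeeping is Kesten's original combinatorial argument in~\cite{Kes}, which decomposes loop-words in $S^*$ directly according to their projection to $G/N$ and bounds the ratio $c(n)/c'(n)$ using nonamenability of $N$.
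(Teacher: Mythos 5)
The paper offers no proof of this lemma; it is quoted verbatim from Kesten, so you are being measured against a citation rather than an argument. Your direction (i) ($N$ amenable $\Rightarrow$ equality) is correct and essentially complete as sketched: Hulanicki plus continuity of induction gives $\lambda_{G/N}\cong\mathrm{Ind}_N^G 1_N\prec\mathrm{Ind}_N^G\lambda_N\cong\lambda_G$, hence $\|M_{G/N}\|\le\|M_G\|$, and the reverse inequality is the easy monotonicity. (The identification $\rho(G,S)=\|M_G\|$ uses the symmetry of $S$ and the faithfulness of the canonical trace on the group von Neumann algebra; both are standard.)

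Direction (ii) is where the genuine gap sits, and it is more than bookkeeping. The excursion decomposition must be run with $z$-weighted generating functions, since the first-return law of the $G$-walk to $N$ is defective whenever the projected walk on $G/N$ is transient: one sets $\nu_z(x)=\sum_n \Pr[T=n,\,X_T=x]\,z^n$ and uses the identities $G(z)=\sum_k\nu_z^{*k}(e)$ and $\overline{G}(z)=\bigl(1-\|\nu_z\|_1\bigr)^{-1}$. Kesten's criterion for the normalized measure gives $\|\lambda_N(\nu_z)\|<\|\nu_z\|_1$, and this does close the argument when the quotient walk is $\rho$-recurrent, i.e.\ when the singularity of $\overline{G}$ at its radius of convergence $\overline{R}=1/\rho(G/N,S')$ comes from $\|\nu_z\|_1$ reaching $1$: then $\|\lambda_N(\nu_{\overline{R}})\|<1$ and a small increase of $z$ pushes the radius of convergence of $G$ strictly past $\overline{R}$. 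But when the quotient walk is $\rho$-transient --- the typical case when $G/N$ is itself nonamenable, e.g.\ $G=\F_2\times\F_2$ with $N=1\times\F_2$ --- one has $\|\nu_{\overline{R}}\|_1<1$, and then the radius of convergence of $z\mapsto\|\nu_z\|_1$ is exactly $\overline{R}$ (otherwise $\overline{G}$ would be analytic there). Your spectral-gap mechanism then buys nothing: $G(\overline{R})<\infty$ already holds without using nonamenability of $N$, and the entire difficulty is to continue past $\overline{R}$, where $\nu_z$ need not even be summable. Handling that case requires a different idea --- Kesten's original combinatorial estimate, or the standard argument that near-maximizers of $\langle M_G\eta,\eta\rangle$ force Cauchy--Schwarz to be almost tight coset-by-coset and hence produce almost-invariant vectors for $N$ in $\ell^2(N)$ --- and your sketch does not supply it.
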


Note that the corresponding inequality for the Cheeger
constant remains open (Conjecture~\ref{conj:isop-strict}).
Note also that there is an alternative notion of the \emph{cogrowth
sequence}, where only reduced words are considered (equivalently,
paths on the Cayley graph are not allowed to backtrack),
see e.g.\ \cite[$\S$14.6]{CD21}.  Much of what we present
translates easily to this setting; we omit it to
avoid the confusion.

\subsection{Critical probabilities} \label{ss:intro-pc}
In the \emph{Bernoulli site percolation}, the vertices of the graph
\ts $\text{Cay}(G,S)$ \ts are open with probability $p$ and closed
with probability $(1-p)$, independently at random.  The \emph{Bernoulli bond percolation}
is defined analogously, but now the edges are open/closed. 
These notions are different, but closely related to each other, see below.

Denote by \ts $\theta^\site(G,S,p)$ \ts and \ts $\theta^\bond(G,S,p)$ \ts
the probability that the identity element~$e$ is in an infinite connected component in the
site and bond percolation, respectively.  We omit the superscript when
the notation or results hold for both site and bond percolations,
hoping this would not lead to confusion.

The \emph{critical probability} is defined as follows:
$$
p_c(G,S) \. := \. \sup \big\{ \ts p \, : \, \theta(G,S,p)=0\ts\big\}.
$$
We refer to~\cite{BS} for an introduction to percolation on Cayley graphs,
to~\cite{BR06,Gri99,Wer09} for a thorough treatment of both classical and recent
aspects, and to~\cite{Dum18} for a recent overview of the subject
(see also~$\S$\ref{ss:finrem-perc}).

It is easy to see that \ts $p_c=1$ \ts for all groups of linear
growth (which are all virtually~$\zz$).  It was conjectured in \cite[Conj.~2]{BS},
that \ts $p_c<1$ for all groups of superlinear growth.  Special cases of
this conjecture have been established in a long series of papers, until it
was eventually proved in~\cite{D+}. 
The ultimate result, the remarkable ``gap inequality'' \. $p_c\le 1-\ve$ \. for a
universal constant \ts $\ve>0$, was obtained in~\cite{PS23} for all groups of
superlinear growth, see also \cite{HT21}.   This shows that Theorem~\ref{t:isol}
does not apply to critical probabilities.

Famously, it was shown in \cite{GS98}, that
$$p_c^\bond(G,S) \. \le \. p^\site_c(G,S) \. \le \. 1 - \big(1-p_c^\bond\big)^{k-1} \quad \text{where} \quad k\ts = \ts |S|.
$$
It is also known that bond percolation can be simulated by site percolation,
but not vice versa, see \cite{GZ24}.  In general, these critical probabilities
do not coincide.  For example, the celebrated Kesten's theorem
states that \ts $p_c^\bond = \ts \frac{1}{2}$ \ts for the square grid (Cayley graph
of~$\zz^2$ with standard generators).
By comparison, $p^\site_c \approx 0.592746$ in this case~\cite{NZ01},
although the exact value is not known.

In all known examples when the critical probabilities \ts $p^\site_c(G,S)$ \ts and \ts $p^\bond_c(G,S)$ \ts are
computed exactly, they are always algebraic, cf.~\cite{Koz,SZ10}.  For example, it is known that
$$p^\site_c(G,S) \, = \, p_c^\bond(H,R) \, = \, 1 \. - \. 2 \ts \sin \big(\tfrac{\pi}{18}\big) \, \in \. \ov{\qqq}\,,
$$
where
$$\aligned
G \. &= \. \<x,y \.  \mid  \. x^3=y^3=(xy)^3=1\>, \quad S \. = \. \{x,x^{-1},y,y^{-1}\}, \\
H \. & = \. \<a,b,c  \.  \mid  \. a^2=b^2=c^2=(abc)^2=1\>, \quad R \. = \. \{a,b,c\}.
\endaligned
$$
In this case, \ts $\text{Cay}(G,S)$ \ts is the \emph{Kagom\'e lattice} \ts and
\ts $\text{Cay}(H,R)$ \ts is the \emph{hexagonal lattice},
see e.g.~\cite[$\S$5.5]{BR06}.
It was asked in~\cite{PeS}, if there exist critical probabilities
\ts $p_c(G,S)$ \ts that are transcendental.
Theorem~\ref{t:main} gives a positive answer to this question.
The theorem also resolves a closely related H\"aggstr\"om's question,
see~$\S$\ref{ss:finrem-comp}.  As before, we start with the
following observation.

\begin{prop}\label{p:perc-mono}
Critical probabilities \, $p_c^\site\., \ts p_c^\bond\.: \. \{(G,S)\} \to (0,1]$ \.
are decreasing parameters.
\end{prop}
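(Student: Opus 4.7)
Plan: The goal is to verify the two defining conditions~$(\ast)$ and~$(\ast\ast)$ for both $f = p_c^{\site}$ and $f = p_c^{\bond}$.

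For the quotient monotonicity~$(\ast)$, namely $p_c(G/N, S') \ge p_c(G, S)$ whenever $N \lhd G$, the standard approach uses the natural graph covering $\pi \colon \text{Cay}(G, S) \to \text{Cay}(G/N, S')$. Starting from Bernoulli-$p$ percolation on the quotient, pull it back through $\pi$ to a dependent percolation on $\text{Cay}(G, S)$ (a site or bond in $G$ is declared open iff its image in $G/N$ is open), which has marginal $p$ at each site or bond. Any open path in the quotient lifts to an open path in the cover, so an infinite open cluster at $\bar{e}$ in $G/N$ forces an infinite connected open set at $e$ in $G$. A comparison with an honest Bernoulli-$p$ percolation on $\text{Cay}(G, S)$, via a standard coupling or FKG-type inequality, then yields $\theta(G/N, S', p) \le \theta(G, S, p)$, and hence $p_c(G/N, S') \ge p_c(G, S)$.

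For the semi-continuity~$(\ast\ast)$, namely $\limsup_n p_c(G_n, S_n) \ge p_c(G, S)$ under Chabauty convergence $(G_n, S_n) \to (G, S)$, the plan is to combine~$(\ast)$ with the local nature of Chabauty convergence. Writing $G = F/N$ and $G_n = F/N_n$ as marked quotients of the free group $F = F(S)$, Chabauty convergence gives $N \cap B_r(F) = N_n \cap B_r(F)$ for every $r$ and all sufficiently large~$n$. Consequently, any finite set of relations defining a finitely presented quotient $\widetilde G$ of $G$ is contained in $N_n$ for $n$ large, so both $G$ and (eventually) $G_n$ are quotients of $\widetilde G$; by~$(\ast)$, $p_c(\widetilde G) \le p_c(G)$ and $p_c(\widetilde G) \le p_c(G_n)$ for $n$ large. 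Exhausting $G$ by a sequence of finitely presented Chabauty approximants $\widetilde G_k \to G$ and using sharpness of the Bernoulli percolation phase transition to identify $p_c(\widetilde G_k) \to p_c(G)$, one concludes $\liminf_n p_c(G_n, S_n) \ge p_c(G, S)$, which implies~$(\ast\ast)$.

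The main technical obstacle is the final limit identification $\lim_k p_c(\widetilde G_k) = p_c(G)$ for the Chabauty exhaustion. This is automatic when $G$ itself is finitely presented (for then $G$ is a quotient of $G_n$ for $n$ large and~$(\ast)$ applies directly); in the general case it requires sharpness of the phase transition to upgrade the local information provided by Chabauty convergence to the correct global value of $p_c$ in the limit.
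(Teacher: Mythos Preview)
The paper itself gives no argument beyond ``straightforward'', so there is nothing to compare against; let me evaluate your sketch on its own merits.

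Your argument for~$(\ast)$ has a genuine gap. You pull back Bernoulli-$p$ from $G/N$ to $G$ by declaring a site (or bond) in $G$ open iff its image is open, and correctly note that an infinite cluster at $\bar e$ lifts to an infinite open set at~$e$ in this dependent configuration. The trouble is the next step: the pulled-back measure has the \emph{same} marginals as Bernoulli-$p$ on $G$ but strictly \emph{more} positive correlation (sites in a common fiber are perfectly correlated), so for increasing events it gives probabilities at least as large as the product measure, not at most. Concretely, take $G=\mathbb Z$, $N=2\mathbb Z$: in the pullback the cluster of $0$ is infinite with probability $p^2>0$, while under honest Bernoulli-$p$ it is a.s.\ finite. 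Thus neither FKG nor any stochastic-domination argument will convert ``infinite cluster in the pullback'' into ``infinite cluster under Bernoulli-$p$ on $G$''. The standard route to $p_c(G)\le p_c(G/N)$ is instead a \emph{coupled exploration}: explore the cluster of $\bar e$ in $G/N$ edge by edge, simultaneously lifting the exploration to $G$ starting at~$e$ and using the same Bernoulli coin at each step. In $G/N$ an edge may be reached a second time (from a cycle) and not re-probed, whereas its lift in $G$ is a fresh edge and gets an independent coin; hence the spanning tree of the $G/N$-cluster embeds into the $G$-cluster, and the $G$-side is genuine Bernoulli-$p$.

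Your plan for~$(\ast\ast)$ is circular as written. You reduce $\limsup_n p_c(G_n)\ge p_c(G)$ to the statement $\lim_k p_c(\widetilde G_k)=p_c(G)$ for a sequence of finitely presented covers $\widetilde G_k\to G$ with $\widetilde G_k\to G$ in Chabauty topology; but that \emph{is} $(\ast\ast)$ for the particular sequence $\{\widetilde G_k\}$, so nothing has been gained. You are right that sharpness is the missing ingredient, but it should be used directly: by Duminil-Copin--Tassion, $p<p_c(G)$ is equivalent to the existence of a finite set $S\ni e$ with $\varphi_p(S)<1$, and $\varphi_p(S)$ depends only on a finite ball around~$S$. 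Chabauty convergence then transfers this finite-size subcriticality witness verbatim to $G_n$ for all large~$n$, giving $p\le p_c(G_n)$ and hence $\liminf_n p_c(G_n)\ge p_c(G)$. No detour through finitely presented groups is needed.
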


The proof of this result is straightforward.  The following result by
Martineau and Severo shows that critical probabilities are strictly
decreasing:

\begin{lemma}[{\rm \cite{MS19}}]
\label{l:perc-strict}
Let \ts $G=\<S\>$ \ts and let \ts $N\ne \mathbf{1}$ \ts be a 
normal subgroup of~$G$.
Let $S'$ be the projection of $S$ onto~$G/N$.
Then:
$$
p_c^\site(G,S) \. < \. p_c^\site(G/N,S') \quad \. \text{and} \.
\quad p_c^\bond(G,S)  \. < \. p_c^\bond(G/N,S').
$$
\end{lemma}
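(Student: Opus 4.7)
The plan is to exploit the graph covering \. $\bar\pi: \text{Cay}(G,S) \to \text{Cay}(G/N,S')$ \. induced by the quotient homomorphism \. $\pi: G \twoheadrightarrow G/N$, whose fibers are indexed by~$N$. The non-strict inequalities \. $p_c^\site(G,S) \le p_c^\site(G/N,S')$ \. and \. $p_c^\bond(G,S) \le p_c^\bond(G/N,S')$ \. are Proposition~\ref{p:perc-mono} and follow from the elementary observation that an infinite open cluster upstairs projects to an infinite connected open subgraph downstairs. The content of the lemma is therefore the strict separation of these thresholds when $N \neq \mathbf{1}$.

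I would compare two percolation models on \. $\text{Cay}(G,S)$: model~(a), the \emph{lifted} model, obtained by running Bernoulli-$p$ percolation on \. $\text{Cay}(G/N,S')$ \. and declaring every site/edge in a $\bar\pi$-fiber to inherit the state of its image; and model~(b), the independent Bernoulli-$p$ model upstairs. Model~(a) has the same critical parameter as Bernoulli percolation on \. $\text{Cay}(G/N,S')$, while model~(b) has critical parameter \. $p_c(G,S)$, so the lemma amounts to a strict stochastic advantage of independent sampling over fiber-correlated sampling on the same underlying graph.

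To extract this advantage, I would interpolate between~(a) and~(b) via a one-parameter family of couplings in which each site/edge is independently resampled with probability~$t$ and otherwise remains locked to the rest of its fiber. The probability that the identity lies in an infinite open cluster is monotone nondecreasing in~$t$ at fixed $p$, and an essential-enhancement / Russo-style differential inequality converts this monotonicity into a quantitatively strict decrease of the critical parameter as $t$ moves from $0$ to $1$. Nontriviality of~$N$ enters exactly here: when $|N|>1$ some fiber has at least two elements, so the interpolation is genuinely nontrivial and the pivotal configurations needed for the differential inequality occur with positive density.

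The main obstacle is to make the argument quantitative when $N$ is infinite, since the classical essential-enhancement framework is designed for finite modifications of a graph. One has to truncate to a finite sub-fiber, establish a strict gap there, and then pass to the limit using the sharpness of the phase transition (subcritical exponential decay). Martineau and Severo~\cite{MS19} carry out precisely this truncation-and-limit argument, and do so uniformly across the site and bond cases, which is why a single reference suffices for both halves of the lemma.
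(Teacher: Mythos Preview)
The paper does not give a proof of this lemma at all: it is stated with the attribution \cite{MS19} and used as a black box (the surrounding text reads ``The following result by Martineau and Severo shows that critical probabilities are strictly decreasing''). Your proposal is therefore not really comparable to ``the paper's own proof'' --- there is none --- but rather to the argument in \cite{MS19} itself, which you explicitly invoke in your final paragraph.

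As a sketch of the Martineau--Severo strategy your outline is broadly accurate: they do work with the covering map $\text{Cay}(G,S)\to\text{Cay}(G/N,S')$, compare the lifted percolation to the independent one, and use an Aizenman--Grimmett style essential-enhancement argument to get strictness, with a truncation to handle infinite fibers. Two caveats. First, your one-parameter ``resample each site independently with probability $t$'' interpolation is a plausible heuristic, but the actual enhancement in \cite{MS19} is set up somewhat differently, and the work lies in verifying that the enhancement is \emph{essential} in the technical sense (constructing pivotal configurations with the right geometry), which is more than just observing that fibers have size $\ge 2$. Second, your proposal is itself not a self-contained proof: it ends by deferring the quantitative step to \cite{MS19}, so in effect you and the paper are doing the same thing --- citing the reference --- only you have added a paragraph of exposition about what that reference contains.
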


Note that there is no assumption that $N$ is nonamenable,
and, in fact, the main result in \cite{MS19} is stated
in the greater generality
of quasi-transitive group actions on graphs.
This result resolved a well-known open problem by
Benjamini and Schramm \cite[Question~1]{BS}.  Thus, for example,
we have \. $1> p_c (\zz^2,S') > p_c(\zz^3,S)$, so the critical
probabilities are not {sharply} decreasing.

\subsection{Entropy} \label{ss:intro-entropy}
Let $G$ be a finitely generated group, and let \ts $\mu: G\to \rr_{\ge 0}$ \ts
be a probability distribution.  \emph{Shannon's entropy} \ts is defined as
$$
\rH(\mu) \, := \, - \sum_{g \ts \in \ts \supp(\mu)} \. \mu(g)  \. \log \mu(g).
$$
As before, let \. $G=\<S\>$, where \ts $S=S^{-1}$. 
Denote by \. $\mu_n(g) :=\pp[x_n=g]$ \. the distribution of the simple random
walk \ts $\{x_n\}$ \ts on \ts Cay$(G,S)$ \ts starting at identity \ts $x_0=e$. Finally,
let
$$
h(G,S) \. := \. \lim_{n\to \infty} \. \frac{\rH(\mu_n)}{n}
$$
denote the \emph{asymptotic entropy}, see \cite{KV83}.  Recall that \ts $h(G,S)>0$ \ts
if and only if \ts Cay$(G,S)$ \ts has the \emph{non-Liouville property} (existence
of non-constant bounded harmonic functions).  Equivalently, \ts $h(G,S)=0$ \ts
if and only if the random walk \ts $\{x_n\}$ \ts has trivial Poisson boundary, ibid.

We note that there are solvable groups of exponential growth with positive
asymptotic entropy; the \emph{lamplighter group} \. $\zz_2 \wr \zz^d$ \ts for $d \geq 3$ is
the most famous example \cite[$\S$6]{KV83} (see also \cite[$\S$9.1]{Pete}
and an introduction in~\cite{Tab17}).
Note also that the asymptotic entropy is known explicitly only in a few cases as it is
so hard to compute.  For example, it was computed in \cite[p.~22]{Gri-thesis} and
\cite[Prop.~2.11]{Bis92}, that
$$
h(\F_m,S) \, = \, \frac{m-1}{m} \. \log \ts (2m-1).
$$
Here \. $\F_m = \big\<z_1^{\pm 1},\ldots,z_m^{\pm 1}\big\>$ \.
is a free group with the standard generating set.  Note that the asymptotic entropy
is transcendental in this case: \. $h(\F_m,S)\notin \ov{\qqq}$.

\begin{prop}\label{p:entropy-mono}
Asymptotic entropy \. $h: \{(G,S)\} \to \rr_{\ge 0}$ \. is an increasing parameter.
\end{prop}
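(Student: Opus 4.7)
The plan is to verify the two conditions \eqref{eq:mono} and \eqref{eq:mono-limit} (in their reversed forms) defining an increasing parameter.

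For the quotient inequality $h(G/N,S') \le h(G,S)$, I would use that the projection $\pi\colon G \to G/N$ sends the simple random walk on $\text{Cay}(G,S)$ to the simple random walk on $\text{Cay}(G/N,S')$, so that the $n$-step distribution on the quotient is exactly the pushforward $\pi_*\mu_n$. Since Shannon entropy is non-increasing under deterministic maps, $\rH(\pi_*\mu_n) \le \rH(\mu_n)$, and dividing by $n$ and passing to the limit yields the desired inequality.

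The Chabauty semicontinuity is the less immediate half. The plan is to combine a subadditivity-plus-Fekete argument (analogous to the use of supermultiplicativity of the cogrowth sequence in Proposition~\ref{p:radius-mono}) with the local nature of finite-length walks. First I would establish
$$
\rH(\mu_{n+m}) \. \le \. \rH(\mu_n) \. + \. \rH(\mu_m),
$$
by writing $x_{n+m} = x_n \cdot (x_n^{-1} x_{n+m})$ as a product of independent factors with distributions $\mu_n$ and $\mu_m$ and using that $x_{n+m}$ is a deterministic function of this pair. Fekete's lemma then upgrades the defining limit to $h(G,S) = \inf_{n\ge 1} \rH(\mu_n)/n$.

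Next I would exploit Chabauty convergence: $(G_m,S_m) \to (G,S)$ means that for every fixed $r$, the labeled $r$-balls around the identity in the respective Cayley graphs eventually coincide. Since an $n$-step walk cannot leave the ball of radius $n$, the distribution $\mu_n^{(m)}$ is determined by the radius-$n$ ball, and therefore $\rH(\mu_n^{(m)}) = \rH(\mu_n)$ for all $m$ sufficiently large (depending on $n$). Combining with the Fekete formula, for each fixed $n$,
$$
\limsup_{m\to\infty} \. h(G_m,S_m) \. \le \. \limsup_{m\to\infty} \. \frac{\rH(\mu_n^{(m)})}{n} \. = \. \frac{\rH(\mu_n)}{n},
$$
and taking the infimum over $n$ yields $\limsup_m h(G_m,S_m) \le h(G,S)$.

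The main point to watch is the interplay between the subadditivity step (which is needed to compare $h$ against fixed-length entropies) and the Chabauty convergence (which only controls fixed-length entropies). Individually neither step is technically hard, but they must be composed in the right order: Fekete first converts $h$ to an infimum of fixed-$n$ entropies, and only then can local convergence of balls be invoked.
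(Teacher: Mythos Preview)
Your argument is correct and is exactly the ``straightforward'' proof the paper has in mind but does not spell out; the paper only remarks (in the parallel case of Proposition~\ref{p:radius-mono}) that the semicontinuity half comes from super/sub-multiplicativity of the relevant sequence, and your subadditivity-plus-Fekete step is precisely the entropy version of that remark.
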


The proof of this result is straightforward.  The following result by Kaimanovich
shows that the asymptotic entropy is strictly increasing:

\begin{lemma}[{\rm \cite[Thm~2]{Kai02}}]
\label{l:entropy-strict}
Let \ts $G=\<S\>$ \ts and let $N$ be a 
normal subgroup of~$G$.  Suppose $N$ is nonamenable,
and let $S'$ be the projection of $S$ onto~$G/N$.
Then \ts $h(G,S) > h(G/N,S')$.
\end{lemma}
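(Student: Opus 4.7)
The displayed inequality $h(G,S)<h(G/N,S')$ is inconsistent with Proposition~\ref{p:entropy-mono} (which makes $h$ an \emph{increasing} parameter, so in fact $h(G/N,S')\le h(G,S)$); the direction that accords with the rest of the paper, and with the theorem in \cite[Thm~2]{Kai02}, is the strict gap $h(G,S)>h(G/N,S')$ when $N$ is nonamenable. The plan below establishes this (correctly oriented) version.

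The backbone of the argument is the Shannon chain rule applied to the factor map $\pi\colon G\twoheadrightarrow G/N$. Writing $\mu_n$ for the $n$-step distribution of the simple random walk $x_n$ on $\text{Cay}(G,S)$ and $\mu_n' := \pi_*\mu_n$ for its push-forward to $\text{Cay}(G/N,S')$, one has
$$
H(\mu_n) \;=\; H(\mu_n') \;+\; H\bigl(\mu_n \,\big|\, \pi(x_n)\bigr).
$$
The conditional-entropy sequence $a_n := H(\mu_n\mid \pi(x_n))$ is subadditive (Markov property, after conditioning on the pair $(x_m,\pi(x_m))$), so Fekete's lemma yields the decomposition
$$
h(G,S)\;=\;h(G/N,S')\;+\;h_N, \qquad h_N \;:=\; \lim_{n\to\infty}\tfrac{a_n}{n}\;\ge\;0.
$$
The lemma thus reduces to showing $h_N>0$ precisely when $N$ is nonamenable.

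To produce this positivity, I would realize $h_N$ as a positive multiple of the asymptotic entropy of a genuine random walk on $N$ itself. Let $0=T_0<T_1<T_2<\cdots$ be the successive return times of the projected walk to the identity coset of $G/N$, and set $y_k := x_{T_k}\in N$. Then $(y_k)$ is a symmetric random walk on $N$ driven by a step distribution $\nu$ whose support generates $N$ (every element of $N$ is the $G$-endpoint of some closed loop in $\text{Cay}(G/N,S')$, hence carries positive $\nu$-mass) and whose Shannon entropy is finite (an exponential tail on the first-return time $T_1$ combined with $|S|<\infty$ yields $H(\nu)<\infty$). A loop-decomposition then identifies $h_N = h(N,\nu)/\mathbb{E}[T_1]$. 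Nonamenability of $N$ forces the asymptotic entropy $h(N,\nu)$ of this finite-entropy, symmetric, generating walk on $N$ to be strictly positive, via the Kaimanovich–Vershik entropy criterion (nonamenability plus a generating, finite-entropy step distribution yields a nontrivial Furstenberg–Poisson boundary). Hence $h_N>0$.

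The main obstacle is the identification $h_N = h(N,\nu)/\mathbb{E}[T_1]$ together with the verifications that $\nu$ has finite Shannon entropy and generating support. Rigorously setting this up requires a careful coupling of the $G$-walk with its quotient and an exponential tail bound on the first-return time $T_1$; this is the technical core of Kaimanovich's original argument in \cite{Kai02}, and the point at which the hypothesis that $N$ is a normal subgroup (rather than just any subgroup) is used to produce a well-defined step distribution on $N$.
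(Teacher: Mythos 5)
You are right that the displayed inequality is reversed: by the paper's own convention an \emph{increasing} parameter satisfies $f(G/N,S')\le f(G,S)$, so ``strictly increasing'' must mean $h(G/N,S')<h(G,S)$ for nonamenable $N$, which is what Kaimanovich proves. Note, however, that the paper gives no proof of this lemma at all --- it is imported verbatim from \cite{Kai02} --- so the only thing to assess is your own argument. Your chain-rule decomposition $H(\mu_n)=H(\mu_n')+a_n$ with $a_n=H\bigl(x_n\mid\pi(x_n)\bigr)$, the subadditivity of $a_n$, and the resulting reduction to showing $h_N=\lim a_n/n>0$ are all correct.

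The gap is in the step that is supposed to deliver this positivity. The induced walk $y_k=x_{T_k}$ exists only if the projected walk on $G/N$ returns to the identity coset infinitely often, i.e.\ only if the quotient walk is recurrent. By Varopoulos this happens only when $G/N$ is finite or virtually $\zz$ or virtually $\zz^2$; in every other case --- in particular whenever $G/N$ is itself nonamenable, or is $\zz^d$ with $d\ge 3$ --- the walk visits the coset $N$ only finitely many times almost surely, so the sequence $(y_k)$ and the measure $\nu$ are simply not defined. Even in the recurrent cases the claimed exponential tail of $T_1$ is false: for $G/N\cong\zz$ one has $\pp[T_1>n]\asymp n^{-1/2}$, so $\ee[T_1]=\infty$, the Abramov-type formula $h_N=h(N,\nu)/\ee[T_1]$ degenerates, and finiteness of $\rH(\nu)$ does not follow. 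So the core of the argument works only for a thin family of quotients. Kaimanovich's actual argument avoids return times entirely and is boundary-theoretic: the Poisson boundary of $(G/N,\mu')$ is identified with the space of $N$-ergodic components of the Poisson boundary of $(G,\mu)$; equality of the two entropies then forces $N$ to act trivially on the latter boundary, and amenability of the boundary action (Zimmer) forces a group acting trivially to be amenable. Salvaging a trajectory-wise argument would require conditioning the $G$-walk on its (transient) quotient trajectory and controlling the resulting inhomogeneous increments in $N$ --- which is precisely where the real difficulty of the theorem lives.
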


Since there are amenable groups with positive asymptotic entropy,
we conclude that the asymptotic entropy is not sharply increasing.
Also, note that the corresponding inequality for the speed of random walk
\ts $\{x_n\}$ \ts remains open (Conjecture~\ref{conj:speed-strict}).
Let us mention other (closely related) entropy notions, such as the
\emph{Connes--St\o{}rmer entropy} (see e.g.~\cite{Bis92}),
and the \emph{tree entropy} \cite{Lyons}.  These can also
be viewed as probabilistic parameters.

\subsection{Proof outline}\label{ss:intro-proof}
The proof of Theorem~\ref{t:main} is extremely general and is
based on a group theoretic construction and a set theoretic argument.
Formally, Theorem~\ref{t:main} is an immediate consequence from
following two complementary lemmas.

\begin{lemma}[combined lemma] \label{l:combined-mono}
All parameters~$f$ in Theorem~\ref{t:main} are strictly monotone.
\end{lemma}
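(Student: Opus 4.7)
The approach is to observe that Lemma~\ref{l:combined-mono} is an immediate synthesis of the propositions and lemmas already recorded in \S\ref{ss:intro-radius}--\S\ref{ss:intro-entropy}. Unpacking the definition of \emph{strictly monotone} from \S\ref{ss:intro-mono}, for each of the four parameters $f$ appearing in Theorem~\ref{t:main} the plan is to verify two things in succession: first, that $f$ is monotone in the sense that both \eqref{eq:mono} and \eqref{eq:mono-limit} hold in a single consistent direction; and second, that the inequality in \eqref{eq:mono} becomes strict whenever the normal subgroup $N \lhd G$ is nonamenable.

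For the monotonicity step, I would simply invoke the three monotonicity propositions. Propositions~\ref{p:radius-mono} and \ref{p:perc-mono} show that the spectral radius $\rho$ and both critical probabilities $p_c^\site, p_c^\bond$ are decreasing parameters, while Proposition~\ref{p:entropy-mono} shows that the asymptotic entropy $h$ is an increasing parameter. Each of these statements already incorporates both \eqref{eq:mono} and the semi-continuity property \eqref{eq:mono-limit}.

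For the strict-inequality step, the plan is to cite the three classical strictness results. Kesten's Lemma~\ref{l:radius-strict} gives $\rho(G,S) < \rho(G/N,S')$ whenever $N$ is nonamenable (in fact sharply, with equality characterizing amenability of $N$). The Martineau--Severo Lemma~\ref{l:perc-strict} gives the strict decrease of both $p_c^\site$ and $p_c^\bond$ for \emph{any} nontrivial $N$, which is more than enough under the nonamenability hypothesis. Kaimanovich's Lemma~\ref{l:entropy-strict} gives $h(G,S) < h(G/N,S')$ whenever $N$ is nonamenable. Combining the two steps for each of the four parameters yields the lemma.

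I do not anticipate a substantive obstacle: the content of Lemma~\ref{l:combined-mono} is essentially organizational. The only thing to watch is that the ``increasing'' versus ``decreasing'' direction is correctly matched in each of the four cases, and that the hypotheses in each cited lemma are no stronger than nonamenability of $N$, which is indeed the case throughout.
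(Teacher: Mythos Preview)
Your proposal is correct and matches the paper's own proof essentially verbatim: the paper simply states that the lemma is a combination of Propositions~\ref{p:radius-mono},~\ref{p:perc-mono},~\ref{p:entropy-mono} and Lemmas~\ref{l:radius-strict},~\ref{l:perc-strict},~\ref{l:entropy-strict}, which is exactly the decomposition you spell out.
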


The lemma is a combination of
Propositions~\ref{p:radius-mono},~\ref{p:perc-mono},~\ref{p:entropy-mono}
and Lemmas~\ref{l:radius-strict},~\ref{l:perc-strict},~\ref{l:entropy-strict}.


\begin{lemma}[main lemma] \label{l:main-mono}
Let $f$ be a strictly monotone parameter.  Then, there is a family
 \. $\big\{(G_\om,S_\om)\big\}$ \. of marked groups, s.t. \.
$\big\{f(G_\om,S_\om)\big\}$ \. has cardinality of the continuum.
\end{lemma}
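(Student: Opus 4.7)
The plan is to combine an elementary set-theoretic construction of an uncountable chain in $\{0,1\}^{\mathbb N}$ with the algebraic input of decorated Grigorchuk groups from~\cite{KP}, and then apply strict monotonicity of $f$ directly; the semi-continuity clause~\eqref{eq:mono-limit} is not needed for this lemma.

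First I would invoke the uncountable family of decorated Grigorchuk groups $\{(G_\omega, S_\omega)\}_{\omega \in \{0,1\}^{\mathbb N}}$ constructed in~\cite{KP}, in which every $S_\omega$ has cardinality~$4$. The key property I need is that the coordinatewise partial order $\leq$ on $\{0,1\}^{\mathbb N}$ lifts to a quotient order on the family: for $\omega \leq \omega'$ there is a surjection $\pi_{\omega,\omega'} : G_\omega \twoheadrightarrow G_{\omega'}$ carrying $S_\omega$ onto $S_{\omega'}$, and whenever $\omega < \omega'$ strictly the kernel $\ker \pi_{\omega,\omega'}$ is nonamenable.

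Next I would produce a chain of continuum cardinality in $(\{0,1\}^{\mathbb N}, \leq)$. Enumerating $\mathbb Q \cap (0,1) = \{q_n : n \in \mathbb N\}$ and setting
$$\omega_t(n) \. := \. \mathbf{1}\bigl[q_n \leq t\bigr] \qquad \text{for } t \in (0,1),\ n \in \mathbb N,$$
we see that for $s < t$, density of $\mathbb Q$ supplies some $q_n \in (s,t]$, whence $\omega_s(n) = 0 < 1 = \omega_t(n)$, while $\omega_s(k) \leq \omega_t(k)$ for every $k$. Combining with the previous step, any $s < t$ yields a surjection $G_{\omega_s} \twoheadrightarrow G_{\omega_t}$ with nonamenable kernel, sending $S_{\omega_s}$ onto $S_{\omega_t}$. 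Strict monotonicity of $f$ now forces $f(G_{\omega_s}, S_{\omega_s}) \neq f(G_{\omega_t}, S_{\omega_t})$, with the strict inequality in the direction prescribed by whether $f$ is increasing or decreasing. Hence $t \mapsto f(G_{\omega_t}, S_{\omega_t})$ is an injection $(0,1) \to \mathbb R$, so its image has cardinality of the continuum.

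The main obstacle is verifying the property claimed in the first step: that the decorated Grigorchuk construction carries a partial-order structure for which strictly larger parameters yield nonamenable quotient kernels. This is a substantive algebraic fact, specific to the construction and not a generic feature of Cantor-indexed families of marked groups; the verification presumably proceeds through the branch / self-similar structure analyzed in~\cite{KP}, identifying subgroups that remain ``large enough'' to be nonamenable after quotienting between adjacent parameter values. Once that input is available, the combinatorial chain argument above is cheap and is entirely insensitive to which of the four parameters of Theorem~\ref{t:main} we are looking at.
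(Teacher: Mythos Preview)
Your argument is correct and in fact takes a cleaner route than the paper's. Both proofs rest on the same algebraic input --- the family of Lemma~\ref{l:main-construction} (which is constructed in the present paper, not in~\cite{KP}, though the tools come from there) --- but you deduce the main lemma from it differently. The paper passes to a sparse subset $M \subset \nn$ on which the map $J \mapsto f(G_J)$ becomes strictly monotone with respect to the \emph{lexicographic} order on~$2^M$; comparing lex-adjacent but inclusion-incomparable sets forces the paper to invoke both the continuity clause of Lemma~\ref{l:main-construction} and the semi-continuity clause~\eqref{eq:mono-limit} for~$f$ (this is the content of Lemmas~\ref{lm:semicontunuous} and~\ref{l:thm-2}). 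Your Dedekind-cut chain, by contrast, stays entirely inside the inclusion order on~$2^{\nn}$, so you only ever compare $G_J$ and $G_{J'}$ with $J \subsetneq J'$; hence only the strict-monotonicity half of Lemma~\ref{l:main-construction} and only clause~\eqref{eq:mono} of~$f$ are needed. Your remark that~\eqref{eq:mono-limit} is dispensable is therefore correct for your argument, though not for the paper's. What the paper's extra work buys is an order-embedding of a Cantor set rather than of an interval into the value set (cf.~$\S$\ref{ss:finrem-sets}); this is marginally stronger but not required for the lemma as stated. One cosmetic point: in Lemma~\ref{l:main-construction} the surjections go from $G_{J'}$ onto $G_J$ when $J \subsetneq J'$, opposite to the direction you wrote; this reverses the sign of your monotone map $t \mapsto f(G_{\omega_t})$ but does not affect the conclusion.
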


The proof of Lemma~\ref{l:main-mono} is in turn an easy consequence
of the following result of independent interest.

\begin{lemma}[main construction] \label{l:main-construction}
There exists a family \ts $\big\{(G_J,S_J) \ts : \ts J \in 2^{\nn}\big\}$ \ts
of \. $4$-generated marked groups, satisfying the following:
\begin{itemize}
\item {\rm (strict monotonicity)} \, For every subset \. $J\subsetneq J'$,
there is a surjection of marked groups \. $G_{J'} \. \to \. G_J$,
s.t.\ the kernel of the projection is nonamenable.
\item {\rm (continuity)} \, For every sequence \. $\{J_n \ssu J \. : \. n\in \nn\}$
\. s.t.\ \. $J_n \to J$ \. in the Tychonoff topology of \.  $2^{\nn}$,
we have \. $\big(G_{J_n},S_{J_n}\big) \. \to \. (G_J,S_J)$ \. in the Chabauty topology.
\end{itemize}
\end{lemma}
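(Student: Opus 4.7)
The plan is to adapt the \emph{decorated Grigorchuk} construction from~\cite{KP}. Fix once and for all a finitely generated nonamenable group $H$ (for instance, a free group of rank~$2$). For each \ts $J\subseteq \nn$ \ts build \ts $G_J$ \ts as a group of automorphisms of the infinite rooted binary tree \ts $\rT$, generated by \ts $S_J:=\{a,b_J,c_J,d_J\}$, where \ts $a$ \ts is the standard rooted involution swapping the two subtrees at the root, and \ts $b_J,c_J,d_J$ \ts are directed automorphisms defined recursively by a state at depth~$n$ that is either the corresponding standard Grigorchuk state (if $n\notin J$) or a twisted state activating an $H$-decoration attached at that level (if $n\in J$). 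By design, \ts $|S_J|=4$ \ts uniformly in \ts $J$, so the Cayley graphs are \ts $4$-regular.

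\textbf{Strict monotonicity.} For \ts $J\subsetneq J'$ \ts define \ts $\pi\colon G_{J'}\toto G_J$ \ts to trivialize the $H$-decoration at every level in \ts $J'\setminus J$ \ts while fixing the underlying tree action. On generators \ts $\pi(a)=a$, \ts $\pi(b_{J'})=b_J$, and similarly for \ts $c,d$; hence \ts $\pi$ \ts is a morphism of marked groups with the same target as source for the first three letters, and a surjection. For any \ts $n_0\in J'\setminus J$, the rigid vertex stabilizers at level \ts $n_0$ \ts in \ts $G_{J'}$ \ts contain copies of~$H$ coming from the decoration; their normal closure lies in \ts $\ker\pi$ \ts and surjects onto~$H$. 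Since \ts $H$ \ts is nonamenable, so is \ts $\ker\pi$.

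\textbf{Continuity.} The Chabauty topology on marked groups is characterized by eventual coincidence of Cayley balls: \ts $(G_n,S_n)\to (G,S)$ \ts iff for every \ts $R\ge 1$ \ts the labeled balls \ts $B_R$ \ts in the Cayley graphs eventually equal \ts $B_R(G,S)$. By the self-similar recursion, any word of length \ts $\le R$ \ts in \ts $S_J$ \ts acts trivially on \ts $\rT$ \ts below some level \ts $R'=R'(R)$ \ts that does not depend on~$J$; consequently, whether two such words represent the same element of \ts $G_J$ \ts is determined by \ts $J\cap\{1,\ldots,R'\}$. Tychonoff convergence \ts $J_n\to J$ \ts asserts that for every finite \ts $F\subseteq\nn$ \ts eventually \ts $J_n\cap F=J\cap F$; applying this with \ts $F=\{1,\ldots,R'\}$ \ts forces \ts $B_R(G_{J_n},S_{J_n})=B_R(G_J,S_J)$ \ts for all sufficiently large \ts $n$, giving Chabauty convergence.

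\textbf{Main obstacle.} The delicate point is the simultaneous realization of three constraints: a $J$-uniform generating set of size exactly four; \emph{faithfulness} of the decoration, so that passing from \ts $J'$ \ts to a strictly smaller \ts $J$ \ts genuinely kills a nonamenable subgroup rather than collapsing decorations to something amenable through unintended relations with the ambient Grigorchuk-type action; and locality of the relation structure, i.e.\ depth bounds on the tree support of short words. Of these, faithfulness is the crux: nonamenability of \ts $H$ \ts alone does not guarantee that the kernel of \ts $\pi$ \ts is nonamenable unless the injected copies of \ts $H$ \ts at level \ts $n_0$ \ts are not distorted by the interaction with the classical Grigorchuk states at neighbouring levels. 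This is precisely the technical content inherited from~\cite{KP}, which we adapt from the index set \ts $\{0,1,2\}^{\nn}$ \ts to the index set \ts $2^{\nn}$.
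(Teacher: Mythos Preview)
Your sketch has the right instincts but a genuine gap: the construction is never actually defined. You describe $b_J,c_J,d_J$ as ``directed automorphisms'' of the binary tree $\rT$ whose state at level $n\in J$ is ``a twisted state activating an $H$-decoration''. But an automorphism of $\rT$ has only a $\zz_2$-valued label at each vertex; there is no room for a free group $H$ to act. To accommodate $H$ you must enlarge the object acted on, and once you do, the claims that $|S_J|=4$, that $\pi(b_{J'})=b_J$ extends to a homomorphism, and that rigid stabilizers contain undistorted copies of $H$, all require proof. You correctly identify faithfulness as the crux and then defer it entirely to~\cite{KP}; that is the missing step, not a technicality.

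The paper's route is structurally different and avoids the tree-automorphism picture. It fixes a \emph{specific} $4$-generated marked quotient $\cH$ of the free Grigorchuk group $\cG$, realized concretely inside $\PSL\bigl(2,\zz[\ii,1/2]\bigr)$, sets $G_i:=F^i_{(012)^\infty}(\cH)$, and defines
\[
G_J \, := \, \Bigl(\bp_{i\in J} G_i\Bigr)\op \bG_{(012)^\infty}
\]
as a \emph{marked group product} (the diagonal subgroup of the direct product). Continuity then follows formally from $G_i\to\bG_\om$ and the fact that balls in $\bp\Gamma_i$ are determined by balls in the factors. For strict monotonicity the paper does not appeal to rigid stabilizers; instead it exhibits an explicit word $\eta_{\om,k}\in\cG$ of length $O(2^k)$ which evaluates in $G_k$ to a normal generator of a copy of $N^{\oplus 2^k}$, where $N=\PSL\bigl(2,\zz[1/2]\bigr)$, yet is trivial in every $G_i$ with $i>k$. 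This word is built from the particular element $h=[c,[d,[b,(ad)^4]]]\in\cH$, and its properties rely on Margulis's normal subgroup theorem for $N$. An arbitrary nonamenable $H$ would not suffice here: one needs both that $H$ is a marked quotient of $\cG$ (so the functors $F_x$ apply and the generating set stays at size~$4$) and that a single short word normally generates a nonamenable subgroup. Your proposal supplies neither ingredient.
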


Equivalently, the continuity condition says that there exists a function \.
$\lambda: \nn \to \nn$, such that the ball of radius $n$ in the Cayley graph
of  $(G_J,S_J)$, depend only on \. $J \cap \{1,\ldots,\lambda(n)\}$.

\subsection{Paper structure} \label{ss:intro-structure}
In Sections~\ref{s:marked} and~\ref{s:gri} we recall definitions
of marked groups, Grigorchuk groups and their convergence.
Most readers familiar with the area should be able to skip
this sections.  In Section~\ref{s:construction} we give the
proof of Lemmas~\ref{l:main-construction} and~\ref{l:main-mono},
which complete the proof of the main Theorem~\ref{t:main}.
In Section~\ref{s:para}, we prove Theorem~\ref{t:isol}.
We conclude with a discussion of other monotone parameters
in Section~\ref{s:other} and final remarks in Section~\ref{s:finrem}.

\medskip

\section{Marked groups and their limits}\label{s:marked}

We recall standard definitions of marked groups.  We stay close to \cite{KP}
from which we heavily borrow the notation and some basic results.

\subsection{Basic definitions and notation}\label{ss:marked-basic}

Denote \ts $[n]=\{1,\ldots,n\}$ \ts and \ts $\nn=\{1,2,\ldots\}$. 
Let \ts $\rr_{\ge 0} = \{x\ge 0\}$, \. $\ov \qqq$ \ts the algebraic numbers,
and \ts $\qqq_p$ \ts the $p$-adic numbers.
We use \ts $(\log a)$ \ts to denote the natural logarithm.

We use both \ts $1$ \ts and \ts $e$ \ts to denote the identity in the group, and
we use \ts $\mathbf{1}$ \ts to denote the trivial group.
Let \ts  $\zz_m=\zz/m\zz$ \ts denote the group of integers modulo~$m$, and let \ts $\F_k$ \ts
denote the free group on~$k$ generators.   Throughout the paper, all generating
sets will be finite and \emph{symmetric}: \ts $S=S^{-1}$.
We use $[x,y]=x^{-1}y^{-1}xy$ to denote the commutator of elements~$x$ and~$y$.

\subsection{Definition of marked groups and their homomorphisms}\label{ss:marked-def}
All groups we will consider will have ordered finite generating
sets of the same size~$k$.  Whenever we mention a group~$G$,
we mean a pair $(G,S)$ where $S=\{s_1,\ldots,s_k\}$ is a ordered symmetric
generating set of $G$ of size $k$.  Although Cayley graph does not depends on the order of generators,
the order is crucial for our results.  We call these \emph{marked groups},
and $k$ will always denote the size of the generating set.

By a slight abuse of notation, will often drop~$S$ and refer
to a \emph{marked group~$G$}, when $S$ is either clear from
the context or not relevant.  Note that we will also have
groups that are not marked; we hope this does not lead to confusion.

Throughout the paper, the homomorphisms between marked
groups will send one generating set to the other.  Formally,
let $(G,S)$ and $(G',S')$ be marked groups, where
$S=\{s_1,\ldots,s_k\}$ and $S'=\{s_1',\ldots,s_k'\}$.  Then
$\phi: (G,S) \to (G',S')$ is a \emph{marked group homomorphism}
if $\phi(s_j) = s_j'$, and this map on generators extends to
the (usual) homomorphism between groups: $\phi: G\to G'$.

An equivalent way to think of marked groups is as
epimorphisms \. $\F_k \toto G$ \. and \. $\F_k \toto G'$. In this picture,
maps between marked groups $G$ and $G'$ correspond to commutative diagrams:
\begin{diagram}[nohug,height=7mm]
     &              &  G      \\
\F_k & \ruOnto(2,1) & \dTo   \\
     & \rdOnto(2,1) & G'     \\
\end{diagram}

This means that for every two marked groups, there is at most one homomorphism $G\to G'$ which is necessary surjective.

\subsection{Products of groups}\label{ss:marked-product}
The \emph{direct product} of groups $G$ and $H$ is denoted $G\oplus H$,
rather than more standard $G \times H$.  This notation
allows us to write infinite product as $\ts \bigoplus \ts G_i$\.,
where all but finitely many terms are trivial
and we will typically omit the index of summation.

We denote by \ts $\prod G_i$ \ts the (usually uncountable) group of
sequences of group elements, without any finiteness conditions.
Of course, when the index set is infinite, the groups \ts
$\bigoplus \ts G_i$ \ts and \ts $\prod G_i$ \ts are not
finitely generated.

Finally, let $H \wr G = G\ltimes H^\ell$ denotes the permutation
wreath product of the groups, where \ts $G \subseteq \Si_\ell$ \ts
is a permutation group of $\ell$ letters.

\subsection{Products of marked groups}\label{ss:marked-product-marked}
Fix \. $I \subseteq \nn$, and let \. $\{(G_i,S_i), i\in I\}$ \. be a
sequence of marked groups with generating sets \. $S_i=\{s_{i1},\dots,s_{ik}\}$.
Define the $(\Gamma,S) = (\bp G_i,S_i)$ to be the subgroup of $\prod G_i$ generated by
diagonally embedding the generating sets of each \ts $G_i$, i.e.,\ \.
$\bp G_i = \< s_1,\dots, s_k\>$, where \. $s_j=\{s_{ij}\} \in \prod G_i$.

Note that $\Gamma$ comes with canonical epimorphisms
$\zeta_i: \Gamma \toto G_i$. Often the generating sets will be
clear from the context and will simply use $\Ga = \bp G_i$.
When the index set contains only $2$ elements we denote the product
by $G_1\op G_2$.

The product \. $\bp G_i$ \. can be defined by universal properties and it is the
``smallest'' marked group which surjects onto each~$G_i$.
Thus, this is equivalent to the categorical product in the category of marked groups.
We refer to~$\S$\ref{ss:finrem-hist}
for some background and references.

\subsection{Limits of groups}\label{ss:marked-limits}
We say that the sequence of marked groups \. $\{(G_i,S_i) \. : \. i \in I\}$ \.
\emph{converges in the Chabauty topology},
to a group $(G,S)$ if for any $n$ there exists $m=m(n)$ such that
such that for any $i>m$ the ball of radius $n$ in $G_i$
is the same as the ball of radius $n$ in~$G$.
We write $\lim_{i\to \infty} G_i = G$.
For Cayley graphs \ts Cay$(G_i,S_i)$ \ts rooted at identity,
this is equivalent to the \emph{Benjamini--Schramm graph convergence},
see e.g.\ \cite[$\S$19.1]{Lov12}.

Equivalently, this can be stated as follows:
if
$R_i = \ker( \F_k \toto G_i)$ and $R = \ker( \F_k \toto G)$
then
$$
\lim_{i\to \infty} R_i \cap B_{\F_k}(n) \. = \. R \cap B_{\F_k}(n)\ts,
$$
which means that  for a fixed $n$ and  sufficiently large $i$ the sets \.
$R_i \cap B_{\F_k}(n)$ \. and \.  $R \cap B_{\F_k}(n)$ \. must coincide.

\begin{lemma}[{\rm \cite[Lemma~4.6]{KP}}]\label{l:product}
Let $\{G_i\}$ be a sequence of marked groups which converge to a marked group~$G$,
and define \. $\Gamma := \bp G_i$. Then there is an epimorphism \. $\pi: \Gamma \toto G$.
Moreover, the kernel of~$\pi$ is equal to the
intersection \ts $\Gamma \cap \bigoplus G_i$\ts.
\end{lemma}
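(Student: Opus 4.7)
\textbf{Proof plan for Lemma~\ref{l:product}.} The plan is to construct $\pi$ on the generating set and then use the Chabauty convergence $G_i \to G$ to verify, on a word-by-word basis, that $\pi$ is well-defined, surjective, and has exactly $\Gamma \cap \bigoplus G_i$ as its kernel. The key technical device throughout will be the reformulation from $\S$\ref{ss:marked-limits}: writing $R_i = \ker(\F_k \toto G_i)$ and $R = \ker(\F_k \toto G)$, for every $n$ there exists $m(n)$ such that $R_i \cap B_{\F_k}(n) = R \cap B_{\F_k}(n)$ whenever $i > m(n)$. In particular, for any fixed word $w \in \F_k$ of length $n$, the question ``does $w$ equal $e$?'' has the same answer in $G_i$ and in $G$ for all $i > m(n)$.

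First I would define $\pi$ by sending the $j$-th generator $s_j$ of $\Gamma$ (the diagonal element $(s_{ij})_{i\in I} \in \prod G_i$) to the $j$-th generator of $G$. To see this extends to a homomorphism, suppose $w(s_1,\dots,s_k) = e$ in $\Gamma$ for some word $w$ of length $n$. Then $w \in R_i$ for every~$i$, and by Chabauty convergence $w \in R$, so $\pi$ is well-defined. Surjectivity is immediate because $S$ generates~$G$.

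Next, for the kernel computation I would handle both inclusions by running the same argument in opposite directions. If $\gamma \in \Gamma \cap \bigoplus G_i$, choose any word $w$ of length $n$ representing $\gamma$; the finite-support hypothesis gives $w \in R_i$ for all sufficiently large $i$, and picking one such $i$ with $i > m(n)$ yields $w \in R$, so $\pi(\gamma) = e$. Conversely, if $\gamma \in \ker \pi$, then the same representing word $w$ lies in $R$, hence in $R_i$ for every $i > m(n)$; this forces the $i$-th coordinate of $\gamma$ to be trivial for all but finitely many~$i$, so $\gamma \in \bigoplus G_i$.

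There is no real obstacle here beyond bookkeeping; the only subtlety is to remember that the length $n$ used to invoke $m(n)$ depends on the chosen representing word rather than on~$\gamma$ itself, but this causes no issue because $m(n)$ is finite for each fixed~$n$ and only finitely many indices are ever excluded in each direction.
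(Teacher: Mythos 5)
Your argument is correct and is exactly the standard proof of this statement (the paper itself only cites \cite[Lemma~4.6]{KP} without reproducing the argument): the reformulation of Chabauty convergence via $R_i \cap B_{\F_k}(n) = R \cap B_{\F_k}(n)$ for $i > m(n)$ is the right tool, and both the well-definedness of $\pi$ and the two kernel inclusions follow from it precisely as you describe. No gaps.
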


Lemma~\ref{l:product} allows us to think of $G$ as the \emph{group at infinity}
for~$\Gamma$.

\begin{ex} \label{e:finite}
We note that many group properties do not survive in the limit.
For example, it is easy to construct examples of amenable groups
with a nonamenable limit.  In fact, classic Margulis's (constant degree)
expander constructions are Cayley graphs of \ts $G_p = \PSL(2,\zz_p)$ \ts
have girth \ts $\Omega(p)$ \ts and virtually free group limit~$\PSL(2,\zz)$.
See e.g.\ \cite[$\S$11]{HLW06} and \cite[$\S$7.3]{Lub95} for more on this
and further references.
\end{ex}

\medskip

\section{Grigorchuk groups}\label{s:gri}

We now recall some definitions and results on Grigorchuk groups.
Again, we stay close to notation and definition in \cite{KP},
and note that these results can be found throughout the literature.

\subsection{Free Grigorchuk group}\label{ss:gri-free}
The \emph{free Grigorchuk group} \ts $\cG$ \ts with presentation
$$
\cG = \langle a,b,c,d \ts \mid \ts a^2=b^2=c^2=d^2=bcd=1 \ra
$$
will play a central role throughout the paper, as all our groups
and also all Grigorchuk groups are homomorphic images of~$\cG$.
This group is the free product of a group of order $2$ and
elementary abelian group of order $4$, i.e.\ \ts $\cG\simeq\zz_2 \ast \zz_2^2$\..
It contains free subgroups and is nonamenable.

\subsection{Family of Grigorchuk groups}\label{ss:gri-family}
Below we present variations on standard results on the Grigorchuk
groups $\bG_\om$. 
Rather than give standard definitions as a subgroup of $\Aut (\rT_2)$,
we define $\bG$ via its properties.  We refer to~\cite{Gri-one,GP08,dlH00}
for a more traditional introduction and most results in this
subsection.

\begin{defn}
\label{d:twist}
Let $\varphi: \cG \toto \cG$ denote the automorphism of order $3$ of the group
$G$ which cyclicly permutes the generators $b$, $c$ and $d$, i.e.,
$$
\varphi(a) = a,
\quad
\varphi(b) = c,
\quad
\varphi(c) = d,
\quad
\varphi(d) = b.
$$
\end{defn}

\smallskip

Let $\pi: \cG \toto H$ be an epimorphism, i.e., suppose group $H$
comes with generating set consisting of $4$ involutions
$\{a,b,c,d\}$ which satisfy $bcd=1$.  By $F(H)$
we define the subgroup of $H \wr \zzz  =  \zzz \ltimes (H \oplus H)$
generated by the elements $A,B,C,D$ defined as
$$
A = (\xi; \ts 1,1)\ts,
\quad
B = (1; \ts a,b)\ts,
\quad
C = (1; \ts a,c)
\quad \text{and} \ \
D = (1; \ts 1, d)\ts,
$$
where $\xi^2=1$ is the generator of~$\zzz$.
It is easy to verify that $A,B,C,D$ are involutions which satisfy $BCD=1$,
which allows us to define an epimorphism $\wt F(\pi): \cG \to F(H)$.

The construction can be twisted by the powers automorphism $\varphi$
$$
\wt F_x(\pi): = \wt F(\pi \circ \varphi^{-x}) \circ \varphi^{x}.
$$
An equivalent way of defining the group $F_x(H)$ is as the subgroups
generated by
\begin{align*}
A_0 =&  (\xi; \ts 1,1)\ts,  \quad &
B_0 =& (1; \ts a,b)\ts, \quad &
C_0 =& (1; \ts a,c)\ts, \quad & 
D_0 =& (1; \ts 1, d)\ts, \\
A_1 =&  (\xi; \ts 1,1)\ts,  \quad &
B_1 =& (1; \ts a,b)\ts, \quad &
C_1 =& (1; \ts 1,c)\ts, \quad & 
D_1 =& (1; \ts a, d)\ts, \\
A_2 =&  (\xi; \ts 1,1)\ts,  \quad &
B_2 =& (1; \ts 1,b)\ts, \quad &
C_2 =& (1; \ts a,c)\ts, \quad & 
D_2 =& (1; \ts a, d)\ts.
\end{align*}

\smallskip

\noindent
Here all groups~$H$ are marked, i.e., come with an epimorphism $\cG \toto H$.
This allows us to slightly simplify the notation as above.

\begin{prop}
Each $F_x$ is a functor form the category of homomorphic images of $\cG$ to itself,
i.e., a group homomorphism $H_1 \to H_2$ which preserves the generators induces,
a group homomorphism $F_x(H_1) \to F_x(H_2)$.
\end{prop}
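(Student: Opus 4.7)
The plan is to reduce the statement to the untwisted case $F_0 = F$ and then construct the induced map by extending componentwise. The key observation is that the wreath product construction $H \mapsto H \wr \zzz$ is itself functorial in $H$, and the generators of $F(H)$ are defined by uniform formulas that involve only the images $a,b,c,d$ of the marked generators of $\cG$ in $H$; hence a marked homomorphism must carry generators to generators.

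First I would handle the twisting. By definition $\wt F_x(\pi) = \wt F(\pi \circ \varphi^{-x}) \circ \varphi^{x}$, so up to pre- and post-composition with the (functorial) automorphism $\varphi^x$ of $\cG$, we may assume $x = 0$. Thus it suffices to produce, given a marked epimorphism $\phi: H_1 \to H_2$ (i.e., a morphism in the category of marked homomorphic images of $\cG$), a homomorphism $F(\phi): F(H_1) \to F(H_2)$ that again preserves generators.

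Next I would construct $F(\phi)$. Define $\Phi: H_1 \wr \zzz \to H_2 \wr \zzz$ by
\[
\Phi(\xi^\ep; \ts h, h') \. = \. (\xi^\ep; \ts \phi(h), \phi(h')).
\]
This is a homomorphism because $\zzz$ acts on $H_1 \oplus H_1$ and $H_2 \oplus H_2$ by the same swap, and $\phi$ is a homomorphism on the base group. Evaluating $\Phi$ on the four generators of $F(H_1) \subseteq H_1 \wr \zzz$ and using $\phi(a) = a$, $\phi(b) = b$, $\phi(c) = c$, $\phi(d) = d$ (as $\phi$ is a marked homomorphism), we see that $\Phi$ sends $A_0, B_0, C_0, D_0$ of $F(H_1)$ to the corresponding $A_0, B_0, C_0, D_0$ of $F(H_2)$. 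Therefore $\Phi$ restricts to a surjection $F(\phi): F(H_1) \toto F(H_2)$, and by construction this restriction intertwines the marked generating sets. Composing with $\varphi^{\pm x}$ yields $F_x(\phi)$.

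Finally, functoriality is immediate from this description: the identity of $H$ induces $\Phi = \mathrm{id}$ on $H \wr \zzz$, hence the identity on $F(H)$; and for composable $\phi_1: H_1 \to H_2$ and $\phi_2: H_2 \to H_3$, the componentwise extension satisfies $\Phi_2 \circ \Phi_1 = \Phi_{2 \circ 1}$, so $F(\phi_2) \circ F(\phi_1) = F(\phi_2 \circ \phi_1)$. The only step that requires genuine checking (rather than formal manipulation) is that $\Phi$ lands in $F(H_2)$ on the generators, and this is not really an obstacle since the generators $A, B, C, D$ are defined by identical formulas in both wreath products. The proof is therefore essentially bookkeeping once the right setup is in place.
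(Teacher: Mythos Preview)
Your proof is correct. The paper states this proposition without proof, presumably regarding it as immediate from the definitions; your argument supplies exactly the expected verification.

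One minor remark: the reduction to $x=0$ via twisting, while valid, is not needed. Your componentwise map $\Phi: H_1 \wr \zzz \to H_2 \wr \zzz$ already sends the generators $A_x, B_x, C_x, D_x$ of $F_x(H_1)$ to those of $F_x(H_2)$ for each $x \in \{0,1,2\}$, since all three generating sets are defined by formulas involving only $1, a, b, c, d$ in the $H$-coordinates and $1, \xi$ in the $\zzz$-coordinate. So the same $\Phi$ restricts correctly in every case, and the functoriality check is uniform in~$x$.
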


\begin{prop}
\label{pro:F_preserves_products}
The functors $F_x$ commutes with the products of marked groups, i.e.,
$$
F_x\left(\bp H_j\right) = \bp F_x(H_j).
$$
\end{prop}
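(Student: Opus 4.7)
The plan is to verify the marked-group identity \ts $F_x(\bp H_j) = \bp F_x(H_j)$ \ts by exhibiting marked epimorphisms in both directions; since any two marked groups admit at most one marked homomorphism between them ($\S$\ref{ss:marked-def}), this forces equality of the two quotients of~$\cG$.

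For the forward direction, I would apply the functor $F_x$ (previous proposition) to each canonical projection \ts $\zeta_j: \bp H_j \toto H_j$, producing marked epimorphisms \ts $F_x(\zeta_j): F_x(\bp H_j) \toto F_x(H_j)$. These assemble into a marked map \ts $F_x(\bp H_j) \to \prod F_x(H_j)$; since the generators of \ts $F_x(\bp H_j)$ \ts are sent to the diagonal tuples generating \ts $\bp F_x(H_j)$, the image is exactly \ts $\bp F_x(H_j)$.

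For the reverse direction, I must show that any word \ts $w(a,b,c,d)$ \ts in the generators of~$\cG$ which evaluates to the identity in every \ts $F_x(H_j)$ \ts also evaluates to the identity in \ts $F_x(\bp H_j)$. Using the wreath-product embedding \ts $F_x(H) \ssu H \wr \zzz = \zzz \ltimes (H \oplus H)$, the element \ts $w(A_x,B_x,C_x,D_x)$ \ts has the form \ts $(\ve_w;\ts p_w, q_w)$, where \ts $\ve_w \in \zzz$ \ts is the parity of the number of $a$'s in $w$, and \ts $p_w, q_w$ \ts are formal words in $a,b,c,d$ that depend only on $w$ and $x$ (the extraction rules involve only the letters of $w$ and the twist $\vp^x$, and never any relation of the target). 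In particular \ts $w$ \ts is trivial in \ts $F_x(H)$ \ts iff \ts $\ve_w = 1$ \ts and both \ts $p_w, q_w$ \ts evaluate to the identity in~$H$. Triviality of $w$ in every $F_x(H_j)$ therefore forces \ts $\ve_w = 1$ \ts and \ts $p_w, q_w \in \bigcap_j \ker(\cG \toto H_j) = \ker(\cG \toto \bp H_j)$, the last equality being immediate from the definition of the product of marked groups as a subgroup of \ts $\prod H_j$; hence $p_w, q_w$ evaluate to the identity in \ts $\bp H_j$, and therefore $w$ is trivial in \ts $F_x(\bp H_j)$.

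The main obstacle will be the clean articulation that the formal words \ts $p_w, q_w$ \ts are intrinsic to $w$ and are independent of the target group $H$. This is ultimately the statement that the wreath-product embedding defining $F_x$ is functorial in~$H$, so the ``section'' map \ts $w \mapsto (p_w, q_w)$ \ts can be performed directly inside~$\cG$ before evaluating anywhere; once that is spelled out, the rest of the argument is a routine unwinding of the universal property of~$\bp$.
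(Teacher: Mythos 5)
Your argument is correct and matches the paper's (one-line) proof, which appeals to exactly the two ingredients you use: functoriality of $F_x$ together with the universal property of $\bp$ for one inclusion, and a direct check from the wreath-product definition for the other. Your detailed verification that the coordinates $p_w,q_w$ are formal words independent of the target group is precisely the ``check directly from the definitions'' the paper alludes to, so there is nothing to add.
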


\begin{proof}
This is immediate consequence of the functoriality of $F_i$ and the universal
property of the products of marked groups. Equivalently one can check directly
from the definitions.
\end{proof}

\begin{defn}
One can define the functor $F_\om$ for any finite word $\om \in \{0,1,2\}^*$
as follows
$$
F_{x_1x_2\dots x_i}(H):= F_{x_1}(F_{x_2}(\dots F_{x_i}(H)\dots))
$$
If $\omega$ is an infinite word on the letters $\{0,1,2\}$ by
$F_\omega^i$ we will denote the functor $F_{\omega_i}$ where
$\omega_i$ is the prefix of $\omega$ of length $i$.
\end{defn}

In~\cite{Gri3}, Grigorchuk defined a group $\bG_\omega$ for any
infinite word $\omega$.  One way to define these groups
is by $\bG_{x\omega} = F_x(\bG_\omega)$, where $x$ is any
letter in $\{0,1,2\}$.  The \emph{first Grigorchuk group}
is denoted $\bG = \bG_{(012)^\infty}$, which
corresponds to a periodic infinite word, see e.g.~\cite{Gri3,Gri-one}.

\subsection{Contraction in Grigorchuk groups} \label{ss:gri-conv}
Let $\bG_{\omega,i} = F^i_\omega(\mathbf{1})$, where $\mathbf{1}$ denotes the
trivial group with one element (with the trivial map
$\cG \toto \mathbf{1}$).

\begin{prop}[{\rm \cite[Prop.~5.9]{KP}}]
\label{p:grig-rooted}
There is a canonical epimorphism $\bG_\omega \toto \bG_{\omega,i}$.
For every $i$, the groups $\bG_{\omega,i}$ are finite and naturally act on finite binary rooted tree
of depth $i$ and this action transitive on the leaves. These actions comes from the standard action
of the Grigorchuk group on the infinite binary tree~$\rT_2$.
\end{prop}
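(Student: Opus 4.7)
The plan is to extract the three claims in Proposition~\ref{p:grig-rooted} and treat them one after another, all as direct consequences of how the functor $F_x$ interacts with (i) the recursive definition of $\bG_\omega$, (ii) the wreath product structure it imposes, and (iii) the iteration that defines $F^i_\omega$.

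First I would build the canonical epimorphism. Since $\bG_{x\omega} = F_x(\bG_\omega)$ by definition, unfolding the first $i$ letters of $\omega$ gives
\[
\bG_\omega \, = \, F_{\omega_1}\bigl(F_{\omega_2}(\dots F_{\omega_i}(\bG_{\sigma^i\omega})\dots)\bigr) \, = \, F^i_\omega\bigl(\bG_{\sigma^i\omega}\bigr),
\]
where $\sigma$ is the shift. The unique marked homomorphism $\bG_{\sigma^i\omega}\toto \mathbf{1}$ exists (the trivial group is terminal in the category of marked images of $\cG$), and applying $F^i_\omega$ together with functoriality of each $F_x$ yields the desired epimorphism $\bG_\omega \toto F^i_\omega(\mathbf 1) = \bG_{\omega,i}$. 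This epimorphism is canonical because marked group homomorphisms are unique when they exist.

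Next I would prove finiteness and the action on the depth-$i$ binary tree simultaneously by induction on $i$. For $i=0$, $\bG_{\omega,0}=\mathbf{1}$ acts (trivially and transitively) on the unique leaf of $T_0$. For the inductive step I use
\[
\bG_{\omega,i+1} \, = \, F_{\omega_1}\bigl(\bG_{\sigma\omega,i}\bigr) \, \subseteq \, \zz_2 \ltimes \bigl(\bG_{\sigma\omega,i}\oplus \bG_{\sigma\omega,i}\bigr),
\]
which is finite by the inductive hypothesis. Viewing $T_{i+1}$ as the rooted join of two copies of $T_i$, the embedding above realises $\bG_{\omega,i+1}$ inside $\Aut(T_{i+1})$: the generator $A$ (in any of its twisted forms $A_0,A_1,A_2$) acts as $\xi$, swapping the two subtrees, while $B,C,D$ act inside each subtree through the two coordinates in $\bG_{\sigma\omega,i}\oplus \bG_{\sigma\omega,i}$. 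Transitivity on the $2^{i+1}$ leaves is immediate: $A$ interchanges the two subtrees, and within each subtree $\bG_{\sigma\omega,i}$ acts transitively on leaves by induction.

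Finally, I would identify these actions with the restriction of the standard Grigorchuk action on $\rT_2$. The standard action is defined by the same wreath-product recursion: a generator of $\bG_\omega$ acts on $\rT_2$ by splitting the tree at the root into two copies of $\rT_2$ and recursing, and this is literally what the definition of $F_{\omega_1}$ encodes when applied to the action of $\bG_{\sigma\omega}$ on $\rT_2$. Truncating at depth $i$ kills the subgroup acting trivially on the top $i$ levels; the induced action on $T_i$ factors through $\bG_{\omega,i}$, and the identification is compatible with the canonical epimorphism constructed in the first step. The main obstacle is bookkeeping the twist $\varphi$ when passing between $F_x$ and $F^i_\omega$ so that the three explicit generator-descriptions for $A_0,B_0,C_0,D_0$, $A_1,B_1,C_1,D_1$, $A_2,B_2,C_2,D_2$ line up consistently with the standard action across all levels; once that is checked, everything else is formal from the functoriality recorded in Proposition~\ref{pro:F_preserves_products} and the recursion $F^{i+1}_\omega = F_{\omega_1}\circ F^i_{\sigma\omega}$.
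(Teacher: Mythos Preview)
Your argument is correct and is the natural one: functoriality of $F_x$ applied to the terminal map $\bG_{\sigma^i\omega}\toto\mathbf{1}$ gives the epimorphism, the wreath recursion $\bG_{\omega,i+1}=F_{\omega_1}(\bG_{\sigma\omega,i})\subset \bG_{\sigma\omega,i}\wr\zz_2$ gives finiteness and the tree action by induction, and transitivity on leaves follows since $A$ swaps the two subtrees. Note, however, that the present paper does \emph{not} prove this proposition at all: it is stated with a citation to \cite[Prop.~5.9]{KP} and no argument is given here, so there is nothing to compare against beyond the fact that your proof is exactly the sort of standard verification one expects (and which is carried out in~\cite{KP}).
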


Here the group $F^i_\om(H)$ is a subgroup of the permutational wreath product
$H \wr_{X_i} \bG_{\om,i}$, where $X_i$ is the set of leaves of the
binary tree of depth~$i$.

\begin{lemma}[{\rm \cite[Lemma~5.11]{KP}}]
\label{l:contracting_property_of_G}
Let $\pi:\cG \toto H$ be an epimorphism, i.e., group~$H$ is
generated by $4$ \emph{nontrivial} involutions which satisfy $bcd=1$.
If the word \ts $\om\in \{0,1,2\}^\ast$ \ts does not stabilize,
then the balls of radius \ts $\leq 2^{m}-1$ \. in the groups
$F^m_{\om}(H)$ and $\bG_{\om}$ coincide.
\end{lemma}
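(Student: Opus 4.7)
The plan is to use the identification $\bG_\om = F^m_\om(\bG_{\sigma^m\om})$, which follows by iterating the recursion $\bG_{x\om}=F_x(\bG_\om)$, so that both $F^m_\om(H)$ and $\bG_\om$ arise as the \emph{same} functor $F^m_\om$ applied to two different base groups. Any word $w$ can then be iteratively decomposed into sections via the wreath-product structure of each $F_{\om_i}$. My aim is to show that, for $|w|_\cG \le 2^m - 1$, this iterated decomposition produces $2^m$ level-$m$ ``section words'' which, after $\cG$-reduction using only the universal relations $a^2=b^2=c^2=d^2=bcd=1$, have length at most $1$. Such short sections will have the same triviality in $H$ and in $\bG_{\sigma^m\om}$, reducing the claim to an elementary check.

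The central technical ingredient is a \emph{one-level contraction}: for any quotient $\pi\colon \cG \toto H'$ by $4$ nontrivial involutions satisfying $bcd=1$, any $x \in \{0,1,2\}$, and any reduced word $w \in F_x(H')$ of length $n$, the literal section words (the products of letters from $\{1,a,b,c,d\}$ produced by the wreath multiplication in $\zz_2 \ltimes (H' \oplus H')$) have length at most the number of non-$A_x$ positions in $w$. Since reduced words in $F_x(H')$ alternate between the generator $A_x=(\xi;1,1)$ and the non-$A$-generators $B_x, C_x, D_x$, and each non-$A$-generator contributes exactly one letter to each section, the literal length of each section is at most $\lceil n/2 \rceil$. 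Consequently each section represents an element of $\cG$-length $\le \lceil n/2 \rceil$.

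Iterating $m$ times---$\cG$-reducing between levels and applying the identity $\lceil \lceil a/b \rceil /c \rceil = \lceil a/(bc)\rceil$---the $2^m$ level-$m$ section words $\tilde w_\bullet$ satisfy $|\tilde w_\bullet|_\cG \le \lceil n/2^m \rceil \le 1$ whenever $n \le 2^m - 1$. So each $\tilde w_\bullet$ is either empty or a single generator $a$, $b$, $c$, or $d$. Because the decomposition algorithm depends only on the functors $F_{\om_i}$ and not on the base group, the same abstract section words arise when the procedure is carried out in $\bG_\om = F^m_\om(\bG_{\sigma^m\om})$. Since $\om$ does not stabilize, neither does $\sigma^m\om$, and it is standard that $a, b, c, d$ remain nontrivial in $\bG_{\sigma^m\om}$; by hypothesis they are also nontrivial in $H$. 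Hence any length-$\le 1$ abstract section is trivial in either base iff it is empty. Finally, an element of an iterated wreath product vanishes iff its top-level $\bG_{\om,m}$-component (the same in both interpretations) and every level-$m$ section vanish, which gives $w = 1$ in $F^m_\om(H)$ if and only if $w = 1$ in $\bG_\om$ for all $|w|_\cG \le 2^m - 1$. This is precisely the claimed coincidence of balls.

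The main obstacle is establishing the one-level contraction with the sharp constant $\lceil n/2 \rceil$: this requires careful bookkeeping of the $\zz_2$-swap in the wreath product and verification that (i) reduced words in $F_x(H')$ alternate $A_x$-letters with non-$A_x$-letters, and (ii) each non-$A_x$-letter contributes exactly one letter to each section, the $A_x$-letters merely reordering which letter lands in which section. The remaining pieces---iteration, the identification $\bG_\om = F^m_\om(\bG_{\sigma^m\om})$ from the recursive definition, and the triviality analysis of length-$\le 1$ sections---are routine bookkeeping.
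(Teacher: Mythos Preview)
The paper does not supply a proof of this lemma; it is quoted verbatim from \cite[Lemma~5.11]{KP}. Your argument is precisely the standard contraction proof for Grigorchuk-type groups and is correct: the halving estimate $|w_{\mathrm{section}}|_{\cG}\le\lceil |w|_{\cG}/2\rceil$ holds because reduced $\cG$-words alternate between $a$ and $\{b,c,d\}$, only the $\{b,c,d\}$-letters contribute to the two coordinates, and $A_x$ merely toggles which coordinate receives the next contribution; iterating and using $\lceil\lceil n/2\rceil/2^{m-1}\rceil=\lceil n/2^m\rceil$ reduces to length-$\le 1$ sections, whose triviality is decided identically in $H$ and in $\bG_{\sigma^m\om}$ since each of $b,c,d$ becomes trivial in $\bG_{\om'}$ only when $\om'$ is constant (which the non-stabilizing hypothesis rules out for every shift). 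This is the same mechanism as the argument in \cite{KP}, so there is nothing to add.
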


We conclude with an immediate corollary of the
Proposition~\ref{p:grig-rooted} and
Lemma~\ref{l:contracting_property_of_G},
which can also be found in~\cite{Gri5}.

\begin{cor}
\label{cor:contracting_property_of_G}
Let \. $\{\cG \toto H_i\}$ \.  be any sequence of  groups generated
by $k=4$ nontrivial involutions. Then the sequence of marked groups
converges: \. $\lim_{i\to \infty} F^{i}_\omega(H_i)\ts = \ts \bG_\omega$.
\end{cor}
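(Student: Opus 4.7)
The plan is to deduce the corollary directly from Lemma~\ref{l:contracting_property_of_G} by reading it as a statement about Chabauty convergence. Recall that a sequence $(\Ga_i,S_i)$ of marked $k$-generated groups converges to $(G,S)$ if and only if for every $n \in \nn$, the ball of radius $n$ around the identity in $\text{Cay}(\Ga_i,S_i)$ eventually agrees with the ball of radius $n$ in $\text{Cay}(G,S)$.

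First, I would verify that the hypotheses of Lemma~\ref{l:contracting_property_of_G} apply to each $H_i$. Since $\cG \toto H_i$, the images of $a,b,c,d$ are involutions in $H_i$ satisfying $bcd=1$, and by assumption they are nontrivial. Thus the lemma applies with $H=H_i$ for every~$i$ (we implicitly assume, as is standard when discussing $\bG_\om$, that the word $\om$ does not eventually stabilize).

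Next, fix an arbitrary $n \in \nn$ and choose $m = m(n)$ large enough that $2^{m}-1 \ge n$. For every $i \ge m$, applying Lemma~\ref{l:contracting_property_of_G} with this $m$ and $H = H_i$ shows that the balls of radius $\le 2^{m}-1$ in $F^{m}_\om(H_i)$ and in $\bG_\om$ coincide. But $F^{i}_\om(H_i) = F^{m}_\om\bigl(F^{i-m}_{\sigma^m \om}(H_i)\bigr)$, where $\sigma$ is the shift, so the same argument applied to the group $F^{i-m}_{\sigma^m\om}(H_i)$ (itself a quotient of $\cG$ by four nontrivial involutions with $bcd=1$, which one checks from the construction of~$F_x$) gives that the ball of radius $n \le 2^m-1$ in $F^{i}_\om(H_i)$ coincides with the ball of radius $n$ in $\bG_\om$. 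Since this holds for all $i \ge m(n)$ and for all $n$, Chabauty convergence follows.

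The only genuine point to verify is that at each stage of the iterated construction, the generators of the intermediate group remain \emph{nontrivial} involutions with $bcd=1$, so that the contraction lemma can legitimately be invoked; this is where I expect the bookkeeping to require a small amount of care, but it is immediate from the explicit formulas for $A_x, B_x, C_x, D_x$ in $\S$\ref{ss:gri-family} together with Proposition~\ref{p:grig-rooted}, which guarantees that the images of the generators in finite levels act nontrivially on the binary tree. Once this is in place the statement is just a restatement of Lemma~\ref{l:contracting_property_of_G} in the limit.
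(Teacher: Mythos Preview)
Your argument is correct, but it takes an unnecessary detour. You fix $m$ with $2^m-1\ge n$ and then, for each $i\ge m$, factor $F^i_\om(H_i)=F^m_\om\bigl(F^{i-m}_{\sigma^m\om}(H_i)\bigr)$ and apply Lemma~\ref{l:contracting_property_of_G} to the intermediate group $F^{i-m}_{\sigma^m\om}(H_i)$; this is what forces you to verify that the generators of that intermediate group remain nontrivial involutions, the ``bookkeeping'' you flag in your last paragraph. None of this is needed. Simply apply Lemma~\ref{l:contracting_property_of_G} directly with $m=i$ and $H=H_i$: the hypothesis holds because $H_i$ itself is generated by four nontrivial involutions with $bcd=1$, and the conclusion is that the balls of radius $\le 2^i-1$ in $F^i_\om(H_i)$ and in $\bG_\om$ coincide. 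Since $2^i-1\to\infty$, Chabauty convergence follows at once. This is the paper's intended one-line argument, and it sidesteps both the shift decomposition and the inductive nontriviality check.
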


\medskip

\section{Main construction} \label{s:construction}

\subsection{A nonamenable group}
\label{def_of_H}

Our main construction uses that the free Grigorchuk group $\cG$ is close to a free group and thus has many nonamenable quotients which are very different from the groups $\bG_\omega$. One such quotient is generated by following matrices in $\PSL\bigl(2,\zz[\ii,1/2]\bigr)$,  where $\ii^2=-1$,
$$
a = \left(\begin{array}{cc} \ii & \ii/4 \\ 0   & -\ii  \end{array}\right),
\quad \
b = \left(\begin{array}{cc}   0 & \ii   \\ \ii & 0   \end{array}\right),
\quad \
c = \left(\begin{array}{cc}   0 & 1     \\  -1 & 0    \end{array}\right),
\quad \
d = \left(\begin{array}{cc} \ii & 0     \\  0  & -\ii \end{array}\right).
$$
A direct computation shows that \. $a^2=b^2=c^2=d^2=bcd=1$, i.e., there is a (non-surjective)
homomorphism \.
$\iota : \cG \to \PSL\bigl(2,\zz[\ii,1/2]\bigr)$.

Let \. $\cH:=\<a,b,c,d\>$ \. denote the marked group generated by the above matrices;
as always we consider it as marked group. The group $\cG$ contains a normal subgroup of index (at most) $2$ generated by $\{c, ad\}$,%
\footnote{The group $\cG$ is $4$ generated and the standard Schreier algorithm gives that any subgroup of index $2$ is generated by $7$ elements - in these case several of these generators are trivial, and other are redundant.}
which yields a subgroup $N:=\<c,ad\>$ of index $2$ of $\cH$. The generators of this subgroup are
$$
c = \left(\begin{array}{cc}   0 & 1     \\  -1 & 0    \end{array}\right),
\quad \
ad = \left(\begin{array}{cc} 1 & -1/4 \\ 0   & 1  \end{array}\right).
$$
It can be verified that \. $\<c,ad\>=\PSL\bigl(2,\zz[1/2]\bigr)$, and thus we have \. $N =\PSL\bigl(2,\zz[1/2]\bigr)$.
Another computations shows that
$$
(ad)^4 \. = \. \left(\begin{array}{cc}1 & -1 \\ 0 & 1\end{array}\right),
\qquad
h \. := \, [c,[d,[b,(ad)^4]]]  \. = \. \left(\begin{array}{cc} -1 & 2 \\ 2 & - 5\end{array}\right).
$$

\begin{lemma}
The element $h$ normally generates the group~$N$.
\end{lemma}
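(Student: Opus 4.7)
The plan is to show that the normal closure $M := \langle\langle h \rangle\rangle_N$ equals all of $N = \PSL(2, \zz[1/2])$ by reducing to a verification in finite congruence quotients and handling an explicit small-prime case. I would begin by exploiting two classical structural facts about the target group: first, $N$ is an irreducible $S$-arithmetic lattice in the semisimple group $\PSL(2, \rr) \times \PSL(2, \qqq_2)$ of total real rank~$2$, so Margulis's normal subgroup theorem forces every nontrivial normal subgroup of~$N$ to have finite index; second, Serre's congruence subgroup property for $\SL_2(\zz[1/p])$ (applicable since $|S|=2$) implies that every such finite-index subgroup contains a principal congruence subgroup $\ker\bigl(N \to \PSL(2, \zz/m\zz)\bigr)$ for some odd $m \ge 3$.

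Combined, these reduce the task to verifying that the image of $h$ normally generates $\PSL(2, \zz/m\zz)$ for every odd $m$. By the Chinese remainder theorem we may assume $m = p^k$ is an odd prime power. Since the kernel of the reduction $\PSL(2, \zz/p^k\zz) \to \PSL(2, \F_p)$ is a nilpotent $p$-group lying in the derived subgroup, normal generation lifts automatically from the prime quotient, so the problem becomes: for every odd prime~$p$, does $h$ mod $p$ normally generate $\PSL(2, \F_p)$? For $p \ge 5$ this group is simple, and one only needs $h \not\equiv \pm I \pmod{p}$, which is immediate from the nonzero off-diagonal entry $\pm 2$ and the distinct diagonal entries modulo~$p$.

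The main obstacle will be the case $p = 3$, where $\PSL(2, \F_3) \cong A_4$ is not simple: the Klein four-group $V_4 \lhd A_4$ coincides with the image of the trace-zero matrices, and since $\tr(h) = -6 \equiv 0 \pmod{3}$, the reduction of $h$ modulo~$3$ lands in $V_4$. The pure structural reduction is therefore inadequate at this single prime. I would resolve this by a direct computation with short words in $h$ and its $N$-conjugates---for example $h \cdot (c h c^{-1})$, or conjugation by a power of $ad$---aimed at producing an element of $M$ whose trace is not divisible by $3$, equivalently an element whose image in $A_4$ escapes $V_4$. Once $M$ is seen to surject onto $A_4$, the congruence argument combines with the generic $p \ge 5$ case to force $M = N$. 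An alternative, more hands-on endgame is to exhibit a single elementary matrix $\begin{pmatrix} 1 & t \\ 0 & 1 \end{pmatrix}$ with $t \in \zz[1/2] \smallsetminus \{0\}$ inside $M$; conjugating by $c$ produces lower-triangular elementary matrices, and then the Serre--Mennicke theorem that $\SL_2(\zz[1/2])$ is generated by elementary matrices yields $M = N$ directly.
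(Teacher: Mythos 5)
You have reconstructed the paper's strategy (Margulis's normal subgroup theorem together with the congruence subgroup property for $\PSL(2,\zz[1/2])$) in considerably more detail than the paper itself gives, and in doing so you have put your finger on the one place where the argument genuinely breaks: the prime $3$. Unfortunately, what you call ``the main obstacle'' is not a technical difficulty to be computed around --- it is fatal, and neither of your proposed repairs can succeed. Since $\tr(h)=-6\equiv 0\pmod 3$, the image of $h$ in $\PSL(2,\mathbb{F}_3)\cong A_4$ is an involution and hence lies in the Klein subgroup $V_4$, exactly as you observe. But $V_4$ is \emph{normal} in $A_4$, so every conjugate of $h^{\pm1}$ also reduces into $V_4$, and therefore the entire normal closure $M=\langle\langle h\rangle\rangle_N$ reduces into $V_4$. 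Equivalently, $h$ dies under the surjection $N\toto\PSL(2,\mathbb{F}_3)\toto A_4/V_4\cong\zz/3\zz$, so $M$ is contained in the kernel of this map, a proper normal subgroup of index divisible by~$3$. No word in conjugates of $h^{\pm1}$ can escape $V_4$ modulo $3$, so the short-word computation you propose is doomed. The alternative endgame fails for the same reason: an elementary matrix $\left(\begin{smallmatrix}1&t\\0&1\end{smallmatrix}\right)$ with $t\in\zz[1/2]$ reduces mod $3$ to an element of order $3$ unless $\bar t=0$, and $V_4$ has exponent $2$; hence any elementary matrix lying in $M$ has $t\in 3\,\zz[1/2]$, and the normal closure of such matrices is at most a level-$3$ congruence subgroup, not $N$.

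The honest conclusion is that the lemma is false for this particular $h$, and the paper's own proof --- which invokes the same two ingredients and then asserts that ``$h$ is not contained in any proper congruence subgroup'' --- overlooks exactly the quotient you found: the condition one needs is not that $h$ survives in each congruence quotient, but that its image \emph{normally generates} each congruence quotient, and this fails modulo~$3$. Your write-up is therefore valuable as a diagnosis, but you should not present the $p=3$ step as completable. What does survive, and what the downstream applications in Section~\ref{s:construction} actually use, is the Margulis half of the argument: since $h\ne 1$ and the center of $N$ is trivial, $M$ has finite index in the lattice $N$ and is in particular nonamenable. To salvage the statement verbatim one must replace $h$ by an element of $N$ whose image in $\PSL(2,\mathbb{F}_3)$ has order $3$ (i.e.\ does not lie in $V_4$); with that change, your reduction via the Chinese remainder theorem, the lifting from $\mathbb{F}_p$ to $\zz/p^k\zz$, and the simplicity argument for $p\ge 5$ are all correct and would complete the proof.
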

\begin{proof}
Let $K$ be the normal subgroup of $N$ generated by~$h$. Since $N$ can be viewed as a lattice in \.
$\PSL(2,\rr) \times \PSL(2,\qqq_2)$, by Margulis's normal
subgroup theorem, we have that subgroup $K$ is either central or of finite
index, see e.g.~\cite[Ch.~IV]{Mar91}.  Moreover $N$ has congruence
subgroup property and~$h$ is not contained in any properly congruence subgroup.
\end{proof}

\smallskip

\subsection{Technical lemma}
Let $r$ denote the element
\ts $[c,[d,[b,(ad)^4]]] \in \cG$, so we have \ts $\iota(r)=h$. Let \ts $r_x = \varphi^x(r)$ \ts
be its twists by the automorphism $\varphi$ described in Definition~\ref{d:twist}.
The following lemma is a variation on our \cite[Lemma~5.16]{KP}, adjusted to
this setting.

\smallskip

\begin{lemma}
\label{l:normalsubgorups_of_F^i}
Let \. $\iota: \cG\toto \cH$, and let \. $N \lhd \cH$ \. be a normal subgroup
generated by element $r_{x_{k+1}}$, defined as above.  Then the kernel of the map \.
$F^k_\omega(\cH) \toto  \bG_{\omega,k}$ \. induced by $F^k_\omega$ from the trivial
homomorphism \. $\cH\toto\mathbf{1}$, contains \. $N^{\oplus 2^{k}}$.
Moreover, there exists a word \. $\eta_{\omega,k} \in F$ \. of length less than
$\le C\cdot 2^k$, for some universal constant $C$,
such that such the image of \.  $\eta_{\omega,k}$ in $F^k_\omega(\cH)$
normally generates \. $N^{\oplus 2^{k}}$ and \. $\eta_{\omega,k}$ \. is trivial in \.
$F^{k+1}_\om(H)$, for every $\cG\toto H$.
\end{lemma}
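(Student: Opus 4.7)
The plan is to construct $\eta_{\omega,k}$ as a carefully organized product of conjugates of $r_{x_{k+1}}$, whose image in $F^k_\omega(\cH)$ places a copy of $\iota(r)=h$ at each of the $2^k$ leaves of the depth-$k$ binary tree, while remaining trivial one level deeper.

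The first step is to verify the key local computation: $r_{x_{k+1}}=\varphi^{x_{k+1}}(r)$ is trivial in $F_{x_{k+1}}(H)$ for any marked $H$. By the $\varphi$-twist construction, the innermost factor $(a\.\varphi^{x_{k+1}}(d))^4$ of $r_{x_{k+1}}$ hits exactly the generator of $F_{x_{k+1}}$ whose first wreath coordinate is trivial; a direct computation in $H\wr\zz_2$ shows this fourth power to be the identity, since both wreath coordinates become squares of involutions in $H$. The innermost commutator, and therefore $r_{x_{k+1}}$ itself, vanishes in $F_{x_{k+1}}(H)$, so by functoriality $r_{x_{k+1}}$ is trivial in $F^{k+1}_\omega(H)=F^k_\omega(F_{x_{k+1}}(H))$ for every~$H$. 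Specializing to $H=\mathbf{1}$, $r_{x_{k+1}}$ is trivial in $\bG_{\omega,k+1}$, hence also in its quotient $\bG_{\omega,k}$, so the image $\widetilde r$ of $r_{x_{k+1}}$ in $F^k_\omega(\cH)$ lies in the kernel $F^k_\omega(\cH)\cap\cH^{\oplus X_k}$ of the projection to $\bG_{\omega,k}$, where $X_k$ denotes the set of $2^k$ leaves.

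A careful unraveling of the iterated wreath structure then shows that $\widetilde r$ is supported at a single distinguished leaf $\xi_0\in X_k$, with value a conjugate of $\iota(r)=h$ inside the corresponding $\cH$-factor. Invoking the transitivity of $\bG_{\omega,k}$ on $X_k$ from Proposition~\ref{p:grig-rooted}, I would spread $r_{x_{k+1}}$ across all leaves: for each $\xi\in X_k$ pick a word $w_\xi\in F$ whose image in $\bG_{\omega,k}$ sends $\xi_0$ to $\xi$, chosen along the binary tree levels so that consecutive $w_\xi$'s differ by a single generator, and set
\[
\eta_{\omega,k} \, := \, \prod_{\xi\in X_k}\, w_\xi\ts r_{x_{k+1}}\ts w_\xi^{-1},
\]
ordered by a recursive tree traversal. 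Together with the constant-length insertion of $r_{x_{k+1}}$ at each of $2^k$ leaves, this yields $|\eta_{\omega,k}|\le C\cdot 2^k$. Triviality of $\eta_{\omega,k}$ in $F^{k+1}_\omega(H)$ is automatic, since each factor is a conjugate of the already-trivial element $r_{x_{k+1}}$.

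The image of $\eta_{\omega,k}$ in $F^k_\omega(\cH)$ contains a copy of $h$ at every leaf; since the base $F^k_\omega(\cH)\cap\cH^{\oplus X_k}$ is normal in $F^k_\omega(\cH)$, conjugation inside $F^k_\omega(\cH)$ produces at each leaf the full normal closure of $h$ inside $\cH$, which equals $N$ by the preceding lemma, and so the total normal closure is $N^{\oplus 2^k}$. The main obstacle I anticipate is the single-leaf support claim: rigorously verifying that $\widetilde r$ sits at exactly one leaf with value a conjugate of $h$ requires careful tracking through the iterated wreath coordinates, and is most naturally handled by induction on $k$ using the recursive self-similarity of the functors $F^k_\omega$ together with the contracting property of Lemma~\ref{l:contracting_property_of_G}.
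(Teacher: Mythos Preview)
Your argument has a genuine gap precisely where you flag it: the claim that the image $\widetilde r$ of $r_{x_{k+1}}$ in $F^k_\omega(\cH)$ is supported at a single leaf with value a conjugate of~$h$ is not merely hard to verify---it is false for large~$k$. The word $r_{x_{k+1}}$ has \emph{fixed} length~$L$ (there are only three words $r_0,r_1,r_2$), and the contraction you invoke works against you rather than for you. A reduced word in $\cG$ alternates between $a$ and $\{b,c,d\}$, so passing to sections in each $F_{x_i}$ roughly halves the length; after $k$ levels the sections $g_\xi$ of $r_{x_{k+1}}$ in $F^k_\omega(\cG)$ have length $\lesssim L/2^k$, and for $k$ beyond $\log_2 L$ they lie in the nucleus $\{1,a,b,c,d\}$. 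Their images $\iota(g_\xi)\in\cH$ are then involutions or trivial, never a conjugate of the infinite-order element~$h$. Thus neither the single-leaf support nor even the weaker statement that some section normally generates~$N$ survives for large~$k$.

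The paper supplies exactly the missing mechanism. It introduces a substitution $\sigma:\cG\to\cG$ (essentially $a\mapsto aca$, fixing $b,c,d$) with the property that $\sigma(\eta)$ evaluates in $F(\cG)$ to $(1;\tau(\eta),\eta)$, together with its twists $\sigma_{x}=\varphi^x\sigma\varphi^{-x}$. Starting from $w_0=r_{x_{k+1}}$ and setting $w_{i+1}=\sigma_{x_{k-i}}(w_i)$, one obtains $\eta_{\omega,k}:=w_k$, a word of length $O(2^k)$ whose evaluation in $F^k_\omega(\cG)$ is, \emph{by construction}, exactly $r_{x_{k+1}}$ in one copy of~$\cG$ and trivial in the others (since $\tau$ kills the nested-commutator shape at each step). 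The single-leaf support is thus engineered, not discovered. From there the paper uses transitivity of $\bG_{\omega,k}$ on $X_k$ to conclude the normal closure contains $N^{\oplus 2^k}$---so your product of $2^k$ conjugates is unnecessary: a single well-placed element already does the job under normal closure.

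Your verification that $r_{x_{k+1}}$ is trivial in $F_{x_{k+1}}(H)$, and hence in $F^{k+1}_\omega(H)$ for every~$H$, is correct and matches the paper's final sentence.
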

\begin{proof}
Consider the substitutions $\sigma,\tau$ (endomorphisms $\cG \to \cG$),
defined as follows:
\begin{itemize}
\item $\sigma(a) = aca$  and $\sigma(s) = s$, for $s\in \{b,c,d\}$\ts,
\item $\tau(a) = c$,  $\tau(b) = \tau(c) = a$ and $\tau(d) = 1$\ts.
\end{itemize}
It is easy to see that for any $\eta \in \cG$, the evaluation of $\sigma(\eta)$
in $F(\cG)$ is equal to
$$
\bigl(1;\tau(\eta),\eta\bigr) \, \in \, \{1\} \times \cG \times \cG \, \subset \, \cG \wr \zzz \ts.
$$

Define $w_i\in\cG$ for $i=0,\dots, k$ as follows:
$w_0 = r_{x_{k+1}}$ and $w_{i+1} = \sigma_{x_{k-i}}(w_{i})$
where $\sigma_{x_i} = \varphi^{x_i} \sigma \varphi^{-x_i}$
the the twist of the substitution $\sigma$.
Notice that all these words have the form $[c,[d,[b,*]]]$ because
$\sigma_{x_i}$ fixes $b$, $c$ and $d$. Therefore $\tau_x(w_i) =1$.

By construction the word $\eta_{\om,k} = w_k$ evaluates in $F^k_\om(\cG)$
to $r_{x_{k+1}}$ in one of the copies of $\cG$, viewed as a subgroup of $F^k_\om(\cG) \subset \cG \wr_{X_k} \bG_{\om,k}$.
Therefore the evaluation of $w_k$ in $F^k_\om(\cH)$ is in the kernel of $F^k_\om(\cH) \to  \bG_{\om,k}$
is the elements $h$ in one of the copies of $\cH$. This together with the transitivity of the action of $\bG_{\om,k}$ on $X_k$
shows that the kernel contains $N^{2^k}$. Finally, the word $\eta$ is trivial in  $F^k_\om(\cG)$ since the $(ad)^4 =1 $ in $F(\cG)$ for any $x$.
\end{proof}

\subsection{Main construction}

Let $\cH$ be the marked group described in~$\S$\ref{def_of_H}.
Denote $G_i$ the marked group $F^i_{(012)^\infty}(\cH)$, which surjects onto $F^i_{(012)^\infty}(\mathbf{1}) =\bG_{{(012)^\infty},i}  $
Using Corollary~\ref{cor:contracting_property_of_G} we can see that $G_i$ converge in the Chabauty topology to $\bG_\om$.

\begin{defn}
Let \. $J\subseteq 2^\nn$ \. be a fixed subset of $\nn$. Denote
$$
\wt G_J  \. := \. \bigotimes_{i\in J} G_i \quad \text{and}
\quad
 G_J \. := \. \wt G_J  \otimes \bG_\om\..
$$
By construction these are $4$-marked groups that are quotients
of~$\cG$, and s.t.\ \. $G_{\emp}=\bG_\om\ts$.
\end{defn}

Using the definition on $G_J$ one can see that the  group $G_J$ can be defined as $\bigotimes_{i\in \nn} \Gamma_{i,J}$ where
$$
\Gamma_{i,J} = \left\{ \begin{array}{ll}
G_i = F^i_{(012)^\infty}(\cH) & \mbox{if } i \in J, \\
\bG_{{(012)^\infty},i} = F^i_{(012)^\infty}(\mathbf{1}) & \mbox{if } i \not\in J.
\end{array}
 \right.
$$

\begin{lemma}
\label{lm:continuity}
The map \. $I \to G_I$ \. defines a continuous map from \. $2^{\nn}$ \.
to the space 
of marked groups. 
\end{lemma}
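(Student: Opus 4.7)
The plan is to verify the reformulation of Chabauty continuity stated immediately after Lemma~\ref{l:main-construction}: I will exhibit a function $\lambda: \nn \to \nn$ such that the ball of radius $n$ in the Cayley graph of $(G_J, S_J)$ depends only on $J \cap [\lambda(n)]$. Equivalently (cf.~$\S$\ref{ss:marked-limits}), the set of relators $\ker(\F_4 \toto G_J) \cap B_{\F_4}(2n)$ must depend only on $J \cap [\lambda(n)]$.

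First I reduce from the product to its factors. By definition, $G_J = \bigotimes_{i \in \nn} \Gamma_{i,J}$ is the diagonal subgroup of $\prod_i \Gamma_{i,J}$, so a word in $\F_4$ is a relator in $G_J$ if and only if it is a relator in every factor $\Gamma_{i,J}$. Hence
$$\ker(\F_4 \toto G_J) \cap B_{\F_4}(2n) \, = \, \bigcap_{i \in \nn} \, \bigl( \ker(\F_4 \toto \Gamma_{i,J}) \cap B_{\F_4}(2n) \bigr).$$
For $i \leq \lambda(n)$, each factor $\Gamma_{i,J}$ is one of the two specific marked groups $F^i_\omega(\cH)$ or $\bG_{\omega,i}$, depending on whether $i \in J$, so its contribution to the intersection depends only on $J \cap [\lambda(n)]$. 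It remains to choose $\lambda$ so that for $i > \lambda(n)$ the factor is always equal to $\ker(\F_4 \toto \bG_\omega) \cap B_{\F_4}(2n)$, independent of $J$.

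This tail claim is the main technical input and I would prove it via Lemma~\ref{l:contracting_property_of_G} applied to the periodic (non-stabilizing) word $\omega = (012)^\infty$. Setting $\lambda(n) := \lceil \log_2(n+1) \rceil + k_0$ for a small absolute constant $k_0$, I split by whether $i \in J$. If $i \in J$, then $\Gamma_{i,J} = F^i_\omega(\cH)$; since $\cH$ from $\S$\ref{def_of_H} is generated by four nontrivial involutions satisfying $bcd = 1$, Lemma~\ref{l:contracting_property_of_G} with $m = i$ yields that the balls of radius $\leq 2^i - 1$ in $F^i_\omega(\cH)$ and in $\bG_\omega$ coincide; since $2^i - 1 \geq n$ whenever $i > \log_2(n+1)$, this supplies the required equality of relators up to length $2n$. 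If $i \notin J$, then $\Gamma_{i,J} = \bG_{\omega,i} = F^i_\omega(\mathbf{1})$; here Lemma~\ref{l:contracting_property_of_G} does not apply as stated because $\mathbf{1}$ has no nontrivial involutions. I would handle this case by rewriting $F^i_\omega(\mathbf{1}) = F^{i-k_0}_\omega(\bG_{\omega',k_0})$, where $\omega' := \omega_{i-k_0+1} \ldots \omega_i$ and $k_0$ is chosen large enough that $\bG_{\omega', k_0}$ already carries four nontrivial involutions, and then applying Lemma~\ref{l:contracting_property_of_G} with $m = i - k_0$ and input $H = \bG_{\omega',k_0}$. The shift $k_0$ is exactly the additive constant in the choice of $\lambda(n)$.

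The only delicate point is the subcase $i \notin J$ above; everything else is formal bookkeeping on intersections of relator sets. The delicacy is cosmetic, however: the Chabauty convergence $\bG_{\omega,i} \to \bG_\omega$ for non-stabilizing $\omega$ is a classical fact about Grigorchuk groups (and is essentially what the index-shift argument above establishes), so one may invoke it directly and avoid the explicit peeling.
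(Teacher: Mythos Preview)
Your proposal is correct and follows essentially the same approach as the paper: both reduce to the fact that the ball of radius $n$ in $G_J=\bigotimes_i \Gamma_{i,J}$ is determined by the balls of radius $n$ in the factors, and that for $i$ large the factors $G_i=F^i_\omega(\cH)$ and $\bG_{\omega,i}$ both agree with $\bG_\omega$ on that ball. The paper simply invokes the Chabauty convergence $G_i\to\bG_\omega$ and $\bG_{\omega,i}\to\bG_\omega$ directly, whereas you unpack the latter via the index-shift trick through Lemma~\ref{l:contracting_property_of_G}; your final paragraph correctly notes this extra care is unnecessary since that convergence is standard.
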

\begin{proof}
This follows from the convergence $G_i \to \bG_\om$ in Chabauty topology and the observation that the ball of radius $n$ in $\bigotimes \Gamma_i$ depend only on the balls of radius $n$ in $\Gamma_i$\ts, for every~$i$.  More precisely, for all~$n$, there exists $N$ such that the ball of radius $n$ in marked groups \ts $G_k$, \ts $\bG_{{(012)^\infty},k}$ \ts and \ts $\bG_{{(012)^\infty}}$ \ts coincide for all \ts $\k \geq N$. This implies that the ball of radius $n$ in $G_J$ coincide with the one in \. $\bigotimes_{i\in \nn} \wt \Gamma_{i,J}$\ts, where
$$
\wt \Gamma_{i,J} = \left\{ \begin{array}{ll}
G_i = F^i_{(012)^\infty}(\cH) & \mbox{if } i \in J, i \leq N \\
\bG_{{(012)^\infty},i} = F^i_{(012)^\infty}(\mathbf{1}) & \mbox{if } i \not\in J, i \leq  N \\
\bG_{{(012)^\infty}}  & \mbox{if }  i >  N. \\
\end{array}
\right.
$$
Therefore, the ball of radius $n$ in $G_J$ depends only on \ts $J \cap \{1,\dots, N\}$,
which implies that the function $J \to G_J$ is continuous.
\end{proof}

For any set \. $J \subset J'$ \. there is a surjection \. $G_{J'} \toto G_J$.
The main step in proving Lemma~\ref{l:main-construction} is show that
if $J$ is a proper subset of $J'$ then this map has a large kernel.

\begin{lemma}
\label{lm:splitting}
The kernel of the map $G_J \to \bG_\om$ is contains
$$
\bigoplus_{i\in J} N_i^{2^i} \subset \bigoplus_{i\in J} H_i^{2^i} \subset \bigoplus_{i\in I} G_J \,.
$$
\end{lemma}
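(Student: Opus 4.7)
The plan is to combine Lemma~\ref{l:product} with Lemma~\ref{l:normalsubgorups_of_F^i} to place enough explicit elements in the kernel of $G_J \to \bG_\om$ to realize the claimed direct sum. By Corollary~\ref{cor:contracting_property_of_G}, the sequence $\{\Gamma_{i,J}\}$ converges to $\bG_\om$ in the Chabauty topology (since both $G_i = F^i_{(012)^\infty}(\cH)$ and $\bG_{(012)^\infty,i} = F^i_{(012)^\infty}(\mathbf{1})$ do). Hence Lemma~\ref{l:product} identifies the kernel of $G_J \to \bG_\om$ with $G_J \cap \bigoplus_i \Gamma_{i,J}$. So it suffices to produce, for each $i \in J$, an element of $G_J$ whose image in $\prod_j \Gamma_{j,J}$ is supported only on the $i$-th coordinate and whose value there normally generates $N_i^{2^i}$; closing under the (slot-preserving) conjugation action of $G_J$ and summing over $i \in J$ then gives the desired inclusion.

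\smallskip

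The main ingredient is the word $\eta_{\om,i}$ from Lemma~\ref{l:normalsubgorups_of_F^i}. Its evaluation in slot $i$, i.e.\ in $G_i = F^i_\om(\cH)$, normally generates $N_i^{2^i}$. The same lemma guarantees that $\eta_{\om,i}$ is trivial in $F^{i+1}_\om(H)$ for every marked quotient $H$ of $\cG$. Combined with the contracting property (Lemma~\ref{l:contracting_property_of_G}) and the length bound $|\eta_{\om,i}| \le C \cdot 2^i$, this forces $\eta_{\om,i}$ to be trivial in $\bG_\om$ and hence in every deeper slot $\Gamma_{j,J}$ with $j > i$. Thus in $G_J$ the image of $\eta_{\om,i}$ is already supported only in the slots $j \le i$.

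\smallskip

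The main obstacle will be showing that this support is in fact concentrated at slot $j = i$, i.e.\ that $\eta_{\om,i}$ is trivial in the shallower factors $\Gamma_{j,J}$ for $j < i$. I would handle this by downward induction on $i$: assuming we have already placed $\bigoplus_{j < i} N_j^{2^j}$ inside the kernel, we can multiply $\eta_{\om,i}$ by a corrective element drawn from the normal closure in $G_J$ of the words $\{\eta_{\om,j} : j < i\}$ to cancel its shallow coordinates, leaving an element supported exactly at slot $i$. Alternatively, one can argue directly from the recursion $w_{t+1} = \sigma_{x_{k-t}}(w_t)$ used to build $\eta_{\om,i}$: each iteration of $\sigma$ embeds the previous word as a deep commutator $[c,[d,[b,\cdot\,]]]$ inside the next wreath-product level, which projects to the identity at any shallower depth. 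Once single-slot elements are produced, conjugating by generators of $G_J$ (which act as the generators of $G_i$ on slot $i$) produces all of $N_i^{2^i}$ there, and assembling across $i \in J$ yields $\bigoplus_{i \in J} N_i^{2^i} \subseteq \ker(G_J \to \bG_\om)$, as required.
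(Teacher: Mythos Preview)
Your main approach is essentially the paper's: use Lemma~\ref{l:product} to identify the kernel with $G_J\cap\bigoplus_i\Gamma_{i,J}$, invoke the word $\eta_{\om,i}$ from Lemma~\ref{l:normalsubgorups_of_F^i} to obtain an element supported in slots $\le i$ that normally generates $N^{2^i}$ at slot~$i$, and then induct on~$i$ to strip away the contributions at shallower slots.  Two small points: (1) what you call ``downward induction'' is really upward induction on~$i$ (you assume the statement for all $j<i$); (2) for the corrective step to make sense you need to know that the evaluation of $\eta_{\om,i}$ at a shallower slot $j\in J$, $j<i$, actually lies in $N^{2^j}$ --- this is what the paper asserts explicitly, and it follows because that evaluation is $\iota(w_{i-j})$ concentrated in a single leaf of $\cH^{2^j}$, and $w_{i-j}$ is a commutator $[c,[d,[b,\ast]]]$, hence lands in $[\cH,\cH]\subseteq N$ since $\cH/N\cong\zz_2$.  (For $j<i$ with $j\notin J$ the slot is $\bG_{\om,j}$, a quotient of $\bG_\om$, where $\eta_{\om,i}$ is already trivial.)

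Your ``alternative'' argument, however, is incorrect.  You suggest that the recursion $w_{t+1}=\sigma_{x_{k-t}}(w_t)$ forces $\eta_{\om,i}$ to project to the identity at shallower depths.  It does not: unwinding $j<i$ levels of the wreath product leaves $w_{i-j}$ in one copy of~$\cH$ (the $\tau$--side vanishes because $\tau_x(w_t)=1$, but the $\sigma$--side survives), and $\iota(w_{i-j})\in\cH$ is typically a nontrivial element of~$N$.  So you cannot avoid the inductive correction; the paper does not attempt to either.
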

\begin{proof}
We will use indiction on $k$ to show that
$$
\sum_{j\in J, j \leq k} N^{2^k} \subset G_J \cap \bigoplus_{i \leq k} \Gamma_{i,J}
$$
The base case of the induction $k=0$ is trivial.
Using  Lemma~\ref{l:normalsubgorups_of_F^i} we can see that element $\eta_{\omega,k}$ is trivial in $G_i$ for $i> k$, therefore it corresponds to an element in $G_J \cap \bigoplus_{i \leq k} \Gamma_{i,J}$. If $k\in J$ this elements evaluates to a normal generators in
of $N^{2^k}$ inside $G_k = \Gamma_{k,J}$ and for $i\leq k$  and $i \in J$ it to some element in $N^{2^i}$ inside $G_i = \Gamma_{i,J}$,
which finishes the induction step.
\end{proof}

\begin{proof}[Proof of Lemma~\ref{l:main-construction}]
The continuity is equivalent to Lemma~\ref{lm:continuity}. The existence of the surjection  $G_{J'} \toto G_J$ for $J \subset J'$ follows from the construction of the groups $G_J$. The strict monotonicity follows from Lemma~\ref{lm:splitting}
which gives that the kernel of $G_{J'} \toto G_J$ contains $N^{2^i}$ for any $i \in J' \setminus J$.
\end{proof}

\subsection{Proof of Lemma~\ref{l:main-mono}}
The first step is to show that for any decreasing parameter~$f$, the mapping
\ts $J \to f(G_J)$ \ts is a upper semi-continuous function \ts
$2^{\nn} \to [0,1]$.

\begin{lemma}
\label{lm:semicontunuous}
Suppose that \ts $I_n$ \ts is decreasing sequence of subsets of $\nn$ which converges to \ts
$I = \cap I_n$ \ts in the Tychonoff topology.  In other words, suppose for every \ts $n\in \nn$,
there exists \ts $k=k(n)$ \ts s.t.\ \ts $I_m \cap [n] = I \cap [n]$ \ts for all \ts $m > k(n)$.
Then for any decreasing parameter $f$ we have
$$
f(G_I) \. = \. \lim_{n\to \infty} \ts f\big(G_{I_n}\big).
$$
\end{lemma}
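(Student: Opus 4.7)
The plan is to combine the two defining features of a decreasing parameter --- monotonicity under quotients \eqref{eq:mono} and upper semi-continuity under Chabauty limits \eqref{eq:mono-limit} --- with the two structural properties of the family $\{G_J\}$ established in Lemmas~\ref{l:main-construction} and~\ref{lm:continuity}.

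First I would exploit the nestedness $I_n \supseteq I_{n+1} \supseteq I$. By Lemma~\ref{l:main-construction}, this yields marked-group surjections
\[
G_{I_n} \, \toto \, G_{I_{n+1}} \, \toto \, G_I,
\]
and property \eqref{eq:mono} for decreasing $f$ then forces
\[
f(G_{I_n}) \, \le \, f(G_{I_{n+1}}) \, \le \, f(G_I) \quad \text{for every } n.
\]
Thus $\{f(G_{I_n})\}$ is a nondecreasing sequence bounded above by $f(G_I)$, so its limit exists and satisfies $\lim_{n\to\infty} f(G_{I_n}) \le f(G_I)$.

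Second, Lemma~\ref{lm:continuity} supplies the Chabauty convergence $G_{I_n} \to G_I$, and then \eqref{eq:mono-limit} provides the matching inequality $\limsup_{n\to\infty} f(G_{I_n}) \ge f(G_I)$. Since the sequence is monotone its $\limsup$ is its limit, and the two inequalities together yield the asserted equality $f(G_I) = \lim_{n\to\infty} f(G_{I_n})$.

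The argument is essentially a packaging of the two parameter axioms against the construction of Section~\ref{s:construction}, so there is no serious obstacle. The only point requiring a moment of care is that the hypothesis ``$I_n$ decreasing'' is needed to upgrade the semi-continuity in \eqref{eq:mono-limit} to a genuine limit statement: without monotonicity of $\{f(G_{I_n})\}$ one could a priori only conclude $\limsup_n f(G_{I_n}) = f(G_I)$, with no control on $\liminf$. An entirely parallel argument, with all inequalities reversed, handles increasing parameters, though the lemma as stated treats only the decreasing case.
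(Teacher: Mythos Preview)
Your argument is correct and follows essentially the same route as the paper: one inequality from the quotient structure via~\eqref{eq:mono}, the other from Chabauty convergence via~\eqref{eq:mono-limit} and Lemma~\ref{lm:continuity}. Your version is in fact slightly more careful, since you use the monotonicity of $\{f(G_{I_n})\}$ to justify that the limit (not just the $\limsup$) exists, whereas the paper passes to the limit without making this step explicit.
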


\begin{proof}
The mapping $I \to G_I$ defines a function from $2^{\nn}$ to the space of marked groups, which is continuous with respect to the
Tychnoff topology on $2^{\nn}$ and the Chabauty topology on the space of marked groups. Therefore,
the balls in the Cayley graphs of the groups  $G_{I_n}$ converge the  ball of $G_{I}$, which implies that
$f(G_I) \leq \lim_{n\to \infty} f(G_{I_n})$ by property~(\ref{eq:mono-limit}).

On other hand, we have that $G_I$ is a quotient of $G_{I_n}$ for each $n$.  Therefore,
\ts $f(G_I) \geq f(G_{I_n})$ by property~(\ref{eq:mono}).  Passing to the limit gives \. $f(G_I) \geq \lim_{n\to \infty} f(G_{I_n})$.
This completes the proof.
\end{proof}

One way to prove that the set of all possible values of $f$ is large is to show that $f(G_I)$
is a strictly decreasing function with respect to the lex order of $2^{\nn}$.
Unfortunately this is not the case in general, however this become true if
we restrict to sufficiently sparse subsets of~$\nn$:

\begin{lemma}
\label{l:thm-2}
Let $f$ be a strictly decreasing parameter, then
there exists a function $\mu_f: \nn \to \nn$ such that the following holds:
For every infinite set $M =\big\{ m_1,m_2, \dots, m_n,\dots \big\}$ with $m_{n+1} > \mu_f(m_n)$,
the function $J \to f(G_J)$ is a strictly decreasing function from $2^M \to \rr$
with respect to the lex total order on $2^M$.
\end{lemma}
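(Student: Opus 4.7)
The plan is to use strict monotonicity of $f$ to produce a definite gap $\delta(K,m_i) > 0$ whenever a single element is flipped on at a low index of $M$, and to use the Tychonoff--Chabauty continuity of Lemma~\ref{lm:continuity} together with the semi-continuity property~\eqref{eq:mono-limit} to control the ``wiggle'' contributed by all elements further out. Choosing $\mu_f$ to enforce sufficient sparsity will then ensure the gap dominates the wiggle.

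\textbf{Reduction.} Given $J_1 \neq J_2 \subseteq M$, let $m_i$ be the smallest element of $M$ at which they disagree; without loss of generality $m_i \in J_1 \setminus J_2$, so $J_1 >_{\text{lex}} J_2$. Put $K := J_1 \cap \{m_1,\ldots,m_{i-1}\} = J_2 \cap \{m_1,\ldots,m_{i-1}\}$, so that $J_1 = K \cup \{m_i\} \cup A_1$ and $J_2 = K \cup A_2$ with $A_1, A_2 \subseteq \{m_{i+1}, m_{i+2},\ldots\}$. Writing $\delta(K,m) := f(G_K) - f(G_{K \cup \{m\}})$, Lemma~\ref{l:main-construction} supplies a nonamenable kernel for $G_{K \cup \{m\}} \toto G_K$, so strict monotonicity of $f$ gives $\delta(K,m) > 0$. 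Monotonicity then yields $f(G_{J_1}) \leq f(G_{K \cup \{m_i\}}) = f(G_K) - \delta(K,m_i)$, so proving $f(G_{J_2}) > f(G_{J_1})$ reduces to establishing
\[
f(G_K) - f(G_{K \cup A_2}) \, < \, \delta(K, m_i).
\]

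\textbf{Tail-control lemma (the main obstacle).} The crux, which I expect to be the main difficulty, is the uniform bound
\[
g_K(N) \, := \, \sup \big\{ f(G_K) - f(G_{K \cup A}) \, : \, A \subseteq (N,\infty) \big\} \. \longrightarrow \. 0 \quad \text{as } N \to \infty.
\]
The challenge is that the sup is over \emph{all} tails $A$ past $N$, possibly infinite, rather than along one nested sequence. My plan is to argue by contradiction: if $g_K(N) \geq \epsilon$ for infinitely many $N$, pick $A_N \subseteq (N,\infty)$ with $f(G_{K \cup A_N}) \leq f(G_K) - \epsilon/2$. Since $A_N \subseteq (N,\infty)$, the indicator sequences of $K \cup A_N$ converge to that of $K$ in the Tychonoff topology on $2^{\nn}$, so Lemma~\ref{lm:continuity} gives $G_{K \cup A_N} \to G_K$ in the Chabauty topology. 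Property~\eqref{eq:mono-limit} then forces $\limsup_N f(G_{K \cup A_N}) \geq f(G_K)$, contradicting $f(G_{K \cup A_N}) \leq f(G_K) - \epsilon/2$.

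\textbf{Definition of $\mu_f$ and conclusion.} Define
\[
\mu_f(m) \, := \, \max_{K \subseteq [m-1]} \, \min \big\{ N \in \nn \, : \, g_K(N) < \delta(K,m)/2 \big\}.
\]
This is a maximum over the finitely many subsets of $[m-1]$, and each inner minimum is finite by the tail-control lemma together with $\delta(K,m) > 0$, so $\mu_f(m)$ is well-defined. If $M$ satisfies $m_{n+1} > \mu_f(m_n)$, then in the reduction above $K \subseteq [m_i - 1]$ is one of the subsets participating in the max defining $\mu_f(m_i)$, and $A_2 \subseteq \{m_{i+1},m_{i+2},\ldots\} \subseteq (\mu_f(m_i),\infty)$, so
\[
f(G_K) - f(G_{K \cup A_2}) \, \leq \, g_K\big(\mu_f(m_i)\big) \, < \, \delta(K, m_i)/2 \, < \, \delta(K, m_i),
\]
which completes the proof by the reduction.
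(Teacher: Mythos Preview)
Your proof is correct and follows the same overall strategy as the paper: obtain a strict gap $\delta(K,m_i)>0$ at the first point of disagreement via strict monotonicity, then use Chabauty continuity and semicontinuity to ensure that the contribution of anything beyond a sufficiently sparse next index is smaller than this gap, and take a finite maximum over $K\subseteq[m_i-1]$ to define $\mu_f$.

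The one place you work harder than necessary is your tail-control lemma. You flag the supremum over \emph{all} tails $A\subseteq(N,\infty)$ as ``the main obstacle'' and handle it by a contradiction argument, extracting a sequence $A_N$ and applying~\eqref{eq:mono-limit} to $G_{K\cup A_N}\to G_K$. The paper sidesteps this entirely using monotonicity: for the single \emph{maximal} tail $K_m:=K\cup\{m,m+1,\ldots\}$, every $A\subseteq[m,\infty)$ satisfies $K\cup A\subseteq K_m$, hence $f(G_{K\cup A})\ge f(G_{K_m})$ by~\eqref{eq:mono}. So it suffices to control the single nested decreasing sequence $K_m\downarrow K$, to which Lemma~\ref{lm:semicontunuous} applies directly and gives $f(G_{K_m})\to f(G_K)$. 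The paper then simply chooses $\Lambda_f(n,K)$ so that $f(G_{K_m})>f(G_{K\cup\{n\}})$ for $m>\Lambda_f(n,K)$, and sets $\mu_f(n)=\max_K\Lambda_f(n,K)$. Both arguments are valid; the paper's is shorter and shows that the ``sup over all tails'' you worried about is in fact not an obstacle at all.
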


\begin{proof}
Let $n$ be an integer and let $K$ be any subset of \. $[n-1]:= \{1,\dots, n-1\}$.
Denote \ts $K'=K \cup \{n\}$.  Since \ts $K \subsetneq K'$, by Lemma~\ref{l:main-construction}
the kernel of $G_{K'} \toto G_K$ is nonamenable and the strict monotonicity of $f$ implies that \ts
$f(G_K) > f(G_{K'})$. Let $K_m$ denote the set \. $K_m = K \cup \{m,m+1,\dots, \}$ for $m > n$.
Clearly we have that $\cap_m K_m = K$ and the sets $K_m$ converge to $K$ from above.
The semi-continuity of $f$ (Lemma~\ref{lm:semicontunuous}) implies that
$$
f(G_K) \. = \. \lim_{m \to \infty} f(G_{K_m}).
$$
Therefore, there exist \. $\Lambda_f(n,K) \in \nn$ \. such that \. $f(G_{K'}) < f(G_{K_m})$ \.
for all \. $m > \Lambda_f(n,K)$.
Denote \. $\mu_f(n) := \max_K \{\Lambda_f(n,K) \} \in \nn$,  where the maximum is taken over
all subsets of~$[n-1]$.

Let \. $M =\{ m_1,m_2, \dots, m_n,\dots \} \ssu \nn$ \. be an infinite set of integers,
such that \. $m_{n+1} > \mu_f(m_n)$ \. for all $n$.
Let \ts $J',J''$ \ts be subsets of $J$ such that \ts $J' <  J''$ \ts
in the lex order on $2^M$. By the definition of the lex order there exists $k$ such that
$J'  \cap [m_k-1] = J'' \cap [m_k-1] $ and $m_k \in J''$ but $m_k \not\in  J'$.
Using the notation from the previous paragraph with $n = m_k$ and $K=J'  \cap [m_k-1]$, we have
$K' \subset J''$ and $J' \subset K_{j_{n+1}}$.  By the choice of the function $\mu$ and the sequence $m_k$,
we have that
$$
f(G_{J'}) \geq  f(G_{K_{M(n,K)}}) > f(G_{K'}) \geq f(G_{J''}),
$$
where the first and the last inequality follow from the inclusions $J' \subset K_{M(n,K)}$ and $K' \subset J''$, and the
the second inequality follows from the definition of the constant  $\Lambda_f(n,K)$.
This verifies that $f$ is a strictly decreasing function on $2^M$, as desired.
\end{proof}

\begin{proof}[Proof of Lemma~\ref{l:main-mono}]
Let $M$ be a sparse set satisfying the conditions in the previous theorem,
so the set $2^M$ has cardinality of a continuum.   By Lemma~\ref{l:thm-2},
the function $J \to f(G_J)$ is strictly decreasing function with respect
to the total lex order on~$2^M$.  Therefore the set of all possible values
of $f$ on the groups $G_J$ has cardinality of the continuum.
\end{proof}

\medskip

\section{Isolated points} \label{s:para}

We now return to the set of spectral radii and asymptotic entropy discussed in Theorem~\ref{t:isol}.

\smallskip

\begin{lemma}\label{lm:converges}
Let $k \ge 2$, and let $\Gamma$ be a $k$-generated marked group which is not free as a marked group.
Then there exists a sequence $H_i$ of nonamenable $k$-generated marked groups
such that $H_i$ converge to an amenable marked group $H$ of subexponential growth and for any $i$ the projection
$$
\Gamma \otimes H_i \toto \Gamma
$$
has nonamenable kernel.
\end{lemma}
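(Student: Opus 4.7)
Since $\Gamma$ is not free as a marked group, the kernel $R_\Gamma := \ker(\F_k \toto \Gamma)$ is a nontrivial normal subgroup of $\F_k$. The plan is to adapt the decorated Grigorchuk construction of Section~\ref{s:construction}: fix a non-eventually-constant word $\omega \in \{0,1,2\}^{\N}$ and a $k$-generated nonamenable quotient $\cH_k$ of the free Grigorchuk group $\cG$, modeled on the arithmetic group $\cH$ of $\S\ref{def_of_H}$, and set $H_i := F^i_\omega(\cH_k)$. For $k = 4$ take $\cH_k = \cH$ directly; for $k > 4$ pad with $k - 4$ trivial generators; for $k \in \{2, 3\}$ use $k$-generator presentations of $\cG$ (the relation $bcd = 1$ makes $d$ redundant, giving a $3$-generator presentation, and further reductions to $2$ generators are known) together with the corresponding nonamenable arithmetic quotient. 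By Corollary~\ref{cor:contracting_property_of_G}, $H_i \to \bG_\omega$ in the Chabauty topology, and $\bG_\omega$ has subexponential growth; each $H_i$ is nonamenable since it surjects onto $\cH_k$ via the natural forgetful map $(A,B,C,D) \mapsto (a,b,c,d)$ on the iterated wreath product.

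The crucial step is to verify that the kernel of $\Gamma \otimes H_i \toto \Gamma$, which coincides with the image of $R_\Gamma$ in $H_i$ under the quotient $\F_k \toto H_i$, is nonamenable for every $i$. I would split into two cases, depending on whether $\cH_k$ is a quotient of $\Gamma$.

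In \emph{Case A}, when $\cH_k$ is not a quotient of $\Gamma$ (so $R_\Gamma \not\subseteq R_{\cH_k}$), the image of $R_\Gamma$ under $\F_k \toto \cH_k$ is a nontrivial normal subgroup of $\cH_k$. By Margulis' normal subgroup theorem applied to the arithmetic lattice $N = \PSL(2, \zz[1/2]) \lhd \cH_k$ (as in the lemma of $\S\ref{def_of_H}$), together with the triviality of the center of $\cH_k$, every nontrivial normal subgroup of $\cH_k$ has finite index and is therefore nonamenable. Since the natural surjection $H_i \toto \cH_k$ sends the image of $R_\Gamma$ in $H_i$ onto the image of $R_\Gamma$ in $\cH_k$, the former is nonamenable as well. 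In \emph{Case B}, when $\cH_k$ is a quotient of $\Gamma$ (so $R_\Gamma \supseteq R_{\cH_k}$), Lemma~\ref{l:normalsubgorups_of_F^i} applies directly: the word $\eta_{\omega, i}$ is trivial in $F^{i+1}_\omega(\cH_k)$ and hence in $\cH_k$, so $\eta_{\omega, i} \in R_{\cH_k} \subseteq R_\Gamma$, yet its image in $H_i$ normally generates the nonamenable subgroup $N^{2^i}$. Therefore the image of $R_\Gamma$ in $H_i$ contains $N^{2^i}$ and is nonamenable.

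The main obstacle is the treatment of $k \in \{2, 3\}$: while the $4$-generated case is essentially self-contained within the framework of Section~\ref{s:construction}, for smaller $k$ one must exhibit a $k$-generated analog of $\cH$ which (i) is a homomorphic image of a $k$-generator presentation of $\cG$, (ii) carries the Margulis normal subgroup property via an arithmetic quotient, (iii) has trivial center so that nontrivial normal subgroups cannot be finite, and (iv) is compatible with the construction of the trigger words $\eta_{\omega, i}$ from Lemma~\ref{l:normalsubgorups_of_F^i}. All four conditions are standard but require some bookkeeping; once verified, the two-case argument above closes the proof.
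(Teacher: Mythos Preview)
Your approach has a genuine gap that the paper's own argument is designed to avoid. You take $H_i = F^i_\omega(\cH_k)$ directly as $k$-generated marked quotients of $\cG$. But then every $H_i$ is a quotient of $\cG$, so if $\Gamma = \cG$ (which is certainly not free as a marked group, since $bcd=1$) you get $\Gamma \otimes H_i = \cG \otimes H_i = \cG$, and the projection $\Gamma \otimes H_i \toto \Gamma$ is an \emph{isomorphism} with trivial kernel. The paper flags exactly this obstruction in the remark immediately following the lemma.

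Your Case~B is where this breaks down. First, the inclusion is reversed: ``$\cH_k$ is a quotient of $\Gamma$'' means $R_\Gamma \subseteq R_{\cH_k}$, not $R_\Gamma \supseteq R_{\cH_k}$, so knowing $\eta_{\omega,i}\in R_{\cH_k}$ does not place it in $R_\Gamma$. Second, ``$\eta_{\omega,i}$ trivial in $F^{i+1}_\omega(\cH_k)$ hence in $\cH_k$'' is unjustified: there is no marked-group map from $F^{i+1}_\omega(\cH_k)$ to $\cH_k$. For $\Gamma=\cG$ no word $\eta_{\omega,i}$ can lie in $R_\Gamma$ and simultaneously have nontrivial image in $H_i$, since $H_i$ is a quotient of $\cG$.

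The paper's proof sidesteps this by not asking $H_i$ to be quotients of $\cG$ at all. Instead it first produces, via the $2$-Frattini series of $\F_k$, a finite marked quotient $K$ of $\F_k$ whose kernel $R$ is free of rank $>3$ and contains a chosen relation $r$ of $\Gamma$ outside the Frattini subgroup of $R$. This lets one build a surjection $\psi: R \toto \cG$ sending $r$ to the generator $a$. The groups $H_i$ are then obtained by composing $\psi$ with $\cG \toto F^i_\omega(\cH)$ and inducing up to $\F_k$; they are extensions of $K$ by subgroups of $F^i_\omega(\cH)^{|K|}$. Because a specific relation of $\Gamma$ is \emph{forced} to hit the generator $a$, the words $\eta_{\omega,i}$ (built from $a$) automatically lie in $R_\Gamma$, and the kernel of $\Gamma\otimes H_i \toto \Gamma$ contains $N^{2^i}$. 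This Frattini-series step is the missing idea in your proposal; it is also what makes the argument uniform in $k$ without needing ad hoc $2$- or $3$-generator models of~$\cG$.
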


\begin{rem}
The condition that $\Gamma$ is not free is necessary since if \. $\Gamma = \F_k$\ts,
then for every $k$-generated group $H$ we have the map \.
$\F_k \otimes H \toto \F_k$ \. is an isomorphism.
\end{rem}

\begin{rem}
For $k=4$, if we drop the last condition in the lemma, one can simply take the group \.
$G_i=F^i_{(012)^\infty}(\cH)$ \. constructed in Section~\ref{s:construction}.
However, since all these groups are quotients of~$\cG$, we note that the
last condition fails for \ts $\Gamma = \cG$.
\end{rem}

As an immediate corollary of this lemma we obtain the following result.

\begin{thm}
\label{t:isol-generic}
Let $f$ be a sharply decreasing parameter.  Then, for any marked group $(\Gamma,S)$
which is not free, there exists a sequence of marked groups \. $\{(\Gamma_i,S_i)\}$,
s.t. $f(\Gamma_i,S_i)< f(\Gamma,S)$ \. and \. $\lim_{i\to \infty} f(\Gamma_i,S_i)= f(\Gamma,S)$.
Therefore, the set \ts $X_{f,2k}$ \ts of values of $f$ on all marked groups with
a generating set of size $2k$ has no isolated points, except possibly at \ts $f(\F_k)$.
\end{thm}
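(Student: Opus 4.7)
The plan is to apply Lemma~\ref{lm:converges} directly. Given a $k$-generated marked group $(\Gamma,S)$ that is not free, let $\{H_i\}$ be the sequence of nonamenable $k$-generated marked groups provided by that lemma, converging in Chabauty topology to an amenable marked group $H$ of subexponential growth, such that each projection $\Gamma\otimes H_i\toto\Gamma$ has nonamenable kernel. Set $(\Gamma_i,S_i):=\Gamma\otimes H_i$ with its canonical diagonal generating set of size $k$. I claim this is the sequence the theorem asks for.

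Since the kernel of $\Gamma_i\toto\Gamma$ is nonamenable and $f$ is sharply decreasing, property~\eqref{eq:mono} yields $f(\Gamma_i,S_i)<f(\Gamma,S)$ for every $i$. It remains to verify $\lim_i f(\Gamma_i,S_i)=f(\Gamma,S)$. The first observation is that $H_i\to H$ in Chabauty topology implies $\Gamma\otimes H_i\to \Gamma\otimes H$ in Chabauty: since $\Gamma\otimes H_i$ is the diagonal subgroup of $\Gamma\times H_i$, the ball of radius $n$ in $\Gamma\otimes H_i$ is determined purely by the balls of radius $n$ in $\Gamma$ and in $H_i$, and for fixed $n$ the latter stabilize.

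The key step is to compute $f(\Gamma\otimes H)$. The kernel of the canonical projection $\Gamma\otimes H \toto \Gamma$ consists of elements of the form $(1,h)\in\Gamma\times H$ with $h\in H$, so it embeds into $H$. Since $H$ has subexponential growth it is amenable, hence so is this kernel. The sharpness of $f$ forces equality in~\eqref{eq:mono}, giving $f(\Gamma\otimes H)=f(\Gamma,S)$. Now the semicontinuity property~\eqref{eq:mono-limit} yields
$$\limsup_i f(\Gamma_i,S_i)\. \geq \. f(\Gamma\otimes H)\. =\. f(\Gamma,S),$$
while from the preceding paragraph $f(\Gamma_i,S_i)<f(\Gamma,S)$ for all $i$. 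Consequently $\limsup_i f(\Gamma_i,S_i)=f(\Gamma,S)$, and passing to a subsequence if needed produces a genuine limit equal to $f(\Gamma,S)$. Thus $f(\Gamma,S)$ is a non-isolated point of $X_{f,k}$, which is the content of the theorem.

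The main obstacle is conceptual rather than computational: everything rests on the carefully calibrated input of Lemma~\ref{lm:converges}, namely that one can approximate $\Gamma$ by groups $\Gamma\otimes H_i$ whose kernels are nonamenable while the Chabauty limit $\Gamma\otimes H$ has amenable kernel over $\Gamma$. The exclusion of $\F_k$ in the conclusion corresponds exactly to the hypothesis in Lemma~\ref{lm:converges} that $\Gamma$ is not free as a marked group: when $\Gamma=\F_k$ every product $\F_k\otimes H$ collapses back to $\F_k$, so no nonamenable kernel can be manufactured this way, and the argument above has no starting point.
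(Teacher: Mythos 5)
Your proposal is correct and follows essentially the same route as the paper's proof: take $\Gamma_i=\Gamma\otimes H_i$ from Lemma~\ref{lm:converges}, get strict inequality from the nonamenable kernels, and use semicontinuity together with sharpness at the amenable-kernel limit $\Gamma\otimes H$ to recover $f(\Gamma,S)$ in the limit. Your added remarks (why the products converge in Chabauty topology, and passing to a subsequence to turn the $\limsup$ into a genuine limit) only make explicit what the paper leaves implicit.
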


\begin{proof}
This is an immediate consequence of Lemma~\ref{lm:converges}, with $\Gamma_i = \Gamma \otimes H_i$ --
the inequality $f(\Gamma_i,S_i)< f(\Gamma,S)$ follow from the strict property of $f$ and the fact that the kernel of
$\Gamma \otimes H_i \toto \Gamma$ is nonamenable. The property $H_i \to H$ in the Chabauty topology implies that \.
$\Gamma \otimes H_i \to \Gamma \otimes H$ \. in the  in the Chabauty topology.  Therefore, \.
$\limsup_{i\to \infty} f(\Gamma \otimes H_i)\geq  f(\Gamma \otimes H)$.
However the sharpness of $f$ implies that $f(\Gamma \otimes H) = f(\Gamma)$ since
the kernel of $\Gamma \otimes H \toto H$ is a subgroup of $H$, and therefore it is amenable.
\end{proof}

\begin{proof}[Proof of Lemma~\ref{lm:converges}]
We start with the following group theoretic result.

\begin{clm}
For any nontrivial normal subgroup $\Delta \lhd \F_k$, there exist a finite $k$-generated marked group $K$ such that the kernel $R= \ker (\F_k \toto K)$ admits a surjective homomorphism $\psi: R \toto  \cG$ where $\psi(\Delta \cap R)$ contains the generator $a$ of $\cG$.
\end{clm}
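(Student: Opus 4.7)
The plan is to find a non-trivial $w\in \Delta$ which becomes a primitive element of some finite-index normal subgroup $R \lhd \F_k$, and then to define $\psi: R\toto \cG$ by extending $\{w\}$ to a free basis of $R$, mapping $w\mapsto a$, sending two other basis elements to $b$ and $c$, and the remaining ones to $1$.

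The first step is to select the witness $w$. Since $\F_k$ is centerless for $k\geq 2$, the subgroup $[\F_k,\Delta]$ is non-trivial, so $\Delta\cap[\F_k,\F_k]$ contains a non-trivial element. Given such an element, write it as $u^n$ with $u\in \F_k$ not a proper power; torsion-freeness of $\F_k^{ab}=\zz^k$ forces $u\in[\F_k,\F_k]$. By replacing the chosen element either by $u$ (if $u\in \Delta$) or by a product of conjugates in $\Delta\cap[\F_k,\F_k]$ that fails to be a proper power (such products are generically non-powers, by uniqueness of roots in free groups), one may assume $w\in \Delta$ is itself not a proper power in $\F_k$.

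Next, apply Marshall Hall's theorem to obtain a finite-index subgroup $R_0\leq \F_k$ in which $\langle w\rangle$ is a free factor, so that $w$ is primitive in $R_0$. The key technical step is to refine this to a \emph{normal} subgroup while preserving primitivity. The naive normal core $\bigcap_{g\in\F_k} gR_0g^{-1}$ can destroy primitivity; instead, perform a Stallings-fold construction on the rose $\bigvee_{i=1}^{k}S^1$: starting from the embedded circle labelled by the cyclically reduced word $w$, fold and extend to a finite \emph{regular} cover in which the lifted loop $\tilde w$ is embedded and occupies a single non-tree edge. The non-power assumption on $w$ is precisely what ensures cyclic aperiodicity and allows this to be done. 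Taking the cover sufficiently large yields a finite-index normal $R\lhd \F_k$ with $w\in R$ primitive and $\operatorname{rk}(R)\geq 4$. Writing $K:=\F_k/R$ and $R=\langle w\rangle\ast\langle r_2,\ldots,r_n\rangle$, define
\[
\psi(w)=a,\quad \psi(r_2)=b,\quad \psi(r_3)=c,\quad \psi(r_i)=1 \ \text{for}\ i\geq 4.
\]
Since $\{a,b,c\}$ generates $\cG$ (as $d=bc$), $\psi$ is surjective, and $a=\psi(w)\in \psi(\Delta\cap R)$, as required.

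The main obstacle is the normalising refinement: the standard way to produce a normal subgroup (passing to the normal core of $R_0$) in general destroys primitivity of $w$, so the careful Stallings-fold construction of a regular cover in which $w$ lifts to an embedded primitive loop is the crucial technical ingredient.
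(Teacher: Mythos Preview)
Your overall strategy matches the paper's: find $w\in\Delta$, then a finite-index normal $R\lhd\F_k$ containing $w$ with a property strong enough to build a surjection $\psi:R\toto\cG$ sending $w$ to $a$. The divergence is in what property you demand of $w$ inside $R$, and this is exactly where your argument has a genuine gap.

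You require $w$ to be \emph{primitive} in $R$. Marshall Hall gives primitivity in \emph{some} finite-index subgroup $R_0$, but upgrading $R_0$ to a normal subgroup while retaining primitivity is not a routine step, and your Stallings-fold sketch does not accomplish it. Starting from the labelled circle for the cyclically reduced $w$ and ``extending to a regular cover'' amounts to asking for a finite quotient $F_k/R$ in whose Cayley graph the path $w$ is an embedded loop that is moreover a non-tree edge for some spanning tree. Nothing in the fold-and-complete procedure forces regularity, and passing to the normal core of $R_0$ can (and typically does) destroy primitivity --- you acknowledge this yourself but then assert without proof that a more careful construction avoids it. The ``cyclic aperiodicity'' of a non-power controls embeddedness of the loop in \emph{some} cover, not primitivity in a \emph{regular} one. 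As stated, this step is not justified; whether the statement is even true in general is far from obvious.

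The paper sidesteps this entirely by asking for much less than primitivity. It picks any nontrivial $r\in\Delta\cap[\F_k,\F_k]$ (no non-power condition needed, so your first paragraph of root-extraction gymnastics is unnecessary), then runs the $2$-Frattini series $T_0=\F_k$, $T_{i+1}=T_i^2[T_i,T_i]$. Residual-$2$-finiteness of $\F_k$ gives a unique $i$ with $r\in T_i\setminus T_{i+1}$; setting $R:=T_i$ one has $r\notin R^2[R,R]$ automatically, and $i\ge 1$ forces $\operatorname{rk}(R)$ large. The point is that the target generators of $\cG$ (via $\zz_2*\zz_2*\zz_2$) have order~$2$, so being nonzero in $R/R^2[R,R]$ --- far weaker than primitivity --- is already enough to arrange $\psi(r)$ to be a generator. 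This is the idea you are missing: match the condition on $r$ to the $2$-torsion of the target rather than insisting on full primitivity.
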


\begin{proof}[Proof of the Claim]
Since $\Delta$  is nontrivial there exists a nontrivial $r \in \Delta \cap [\F_k,\F_k]$.
Consider the $2$-Frattini series for $\F_k$ defined by $T_0 = \F_k$ and $T_i = T_i^2[T_i,T_i]$. Each $T_i$ is a finite index in $\F_k$ and
$\bigcap T_i = 1$ since the free group is a residually $2$-group.  Therefore, there exists an index $i$ such that $r \in T_i \setminus T_{i+1}$ and by the choice of $r$ we have that $i \geq 1$. We can take $K = \F_k/T_i$, the kernel $R$ is $T_i$  and is a free group
of rank $(k-1) |K| + 1 > 3$. Since $r$ is outside the $2$-Frattini subgroup of $R$, there exists a homomorphism $R \toto \zz_2 * \zz_2 * \zz_2$ which maps $r$ to one of the generators. Finally $\psi$ is the composition with the projection  $\zz_2 * \zz_2 * \zz_2 \toto \cG$.
\end{proof}

Let $\psi$ be the homomorphism from the Claim applied to $\Delta = \ker (\F_k \toto \Gamma)$.
Denote by $\psi_i$ the composition of $\psi$ and the projection $\cG \toto F^i_{(012)^\infty}(\cH)$ and $\psi_\infty$ the composition of $\psi$ and the projection $\cG \toto \bG$.
Let $L_i$ be in intersection of all conjugates of $\ker \psi_i$ in $F_k$ (there are exactly $|K|$ of them since $\ker \phi_i$ is a normal subgroup of $R$, similarly define $L_\infty$.
By construction we have that $H_i = F_k/L_i$ fits into exact sequence
$$
1 \to M_i \to H_i \to K \to 1
$$
where $M_i$ is naturally a subgroup of $F^i_{(012)^\infty}(\cH)^{\oplus |K|}$, and a similar statement for $H_\infty$.

By construction there is a relation in $\Gamma$ which maps to a generator of $\cG$ therefore there exists a relation in $\Gamma$ which maps to the word $\eta_{(012)^\infty,i}$ constructed in the proof of Lemma~\ref{l:normalsubgorups_of_F^i}. This implies that
the kernel of the map
$$
\Gamma \otimes H_i \toto \Gamma
$$
contains $N^{2^i}$ and is nonamenable.

The convergence \ts $\cG \toto F^i_{(012)^\infty}(\cH) \to \bG$ \ts
in the Chabauty topology implies that $H_i$ converge to $H_\infty$
which is an amenable group.  This completes the proof of Lemma~\ref{lm:converges}.
\end{proof}

\smallskip

\begin{proof}[Proof of Theorem~\ref{t:isol}]
For $k$ even and \ts $x> \al_k$\ts,
the theorem follows immediately from Theorem~\ref{t:isol-generic} since
the spectral radius is sharply decreasing by Lemma~\ref{l:radius-strict}.
The case for $k$ odd follows from a variant of Lemma~\ref{lm:converges} for groups where one of the generators has order $2$.

For \ts $k=2r\ge 4$, it is known that $\al_k$ is not an isolated point because
$$
\al_k \, = \, \lim_{n \to \infty} \. \rho\big(G_{r,n}\ts,S\big),
$$
where \ts $G_{r,n}$ \ts is the free product of \ts $r$ \ts copies of \ts $\zz_n$ \ts
with a natural generating set~$S$,  see e.g.\ \cite[Ex.~9.25(2)]{Woe}.
The argument in the reference above can be extended to show that for all \ts $k=2r+1\ge 3$, we have:
$$
\al_k \, = \, \lim_{n \to \infty} \. \rho\big(Y_{r,n},S\big)\., \quad \text{where} \quad
Y_{r,n} \, := \ \underbrace{\zz_2 \ast \. \cdots \. \ast \zz_2}_{2r} \. \ast \.\ts \zz_n \..
$$
This shows that \ts $\al_k$ \ts are not isolated points, for all \ts $k\ge 3$. This completes the proof.
\end{proof}

\medskip

\section{Other monotone parameters}\label{s:other}

\subsection{Cheeger constant} \label{ss:other-isop}
Let $|S|=k$ and define the \emph{Cheeger constant} (also called \emph{isoperimetric constant}):
$$
\phi(G,S) \, := \, \inf_{X\ssu G} \. \frac{|E(X,G\sm X)|}{k\ts |X|}\.,
$$
where the infimum is over all nonempty finite $X$, and \ts $E(X,Y)$ \ts
is the set of edges $(x,y)$ in \ts $\text{Cay}(G,S)$ \ts
such that $x\in X, \ts y\in Y$.  Note that \. $0\le \phi(G,S) \le 1$.

The following celebrated inequality relates the spectral radius
and the Cheeger constant:
$$
\tfrac12 \.\phi(G,S)^2 \, \le \, 1 \. - \. \rho(G,S) \, \le \, \phi(G,S).
$$
This inequality was discovered independently by a number of authors in different
contexts, see e.g.~\cite[$\S$7.2]{Pete}.  In particular, we have \ts $\phi(G,S)>0$ \ts
if and only if $G$ is nonamenable.

Similar to other probabilistic parameters, computing the Cheeger constant
is very difficult and the exact values are known only in a few special cases.
For example,  $\phi(\F_k,S) = \frac{k-1}{k}$  \. for a free group with standard
generators.  Additionally, the Cheeger constant is computed for nonamenable
hyperbolic tessellations where it is always algebraic~\cite{HJL02}
(see also~\cite[$\S$5.3]{LP16}).

\begin{prop}\label{p:isop-mono}
Cheeger constant \. $\phi: \{(G,S)\} \to \rr_{\ge 0}$ \. is an increasing parameter.
\end{prop}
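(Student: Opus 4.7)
The plan is to verify directly the two defining clauses of an increasing parameter: the quotient inequality $\phi(G/N, S') \le \phi(G, S)$ for every $N \lhd G$, and the semi-continuity $\liminf_{n \to \infty} \phi(G_n, S_n) \le \phi(G, S)$ whenever $(G_n, S_n) \to (G, S)$ in the Chabauty topology.

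I would handle the semi-continuity first, as it is purely mechanical. Fix $\varepsilon > 0$ and pick a finite witness $X \subset G$ contained in the ball $B_G(R)$ with $|E(X, G \sm X)|/(k|X|) < \phi(G,S) + \varepsilon$. By the definition of Chabauty convergence, for all $n$ sufficiently large the rooted ball of radius $R+1$ in $\text{Cay}(G_n,S_n)$ is canonically isomorphic as a labeled graph to the corresponding ball in $\text{Cay}(G,S)$; transporting $X$ across this isomorphism gives a set $X_n \subset G_n$ with the same cardinality and the same boundary count. Hence $\phi(G_n,S_n) \le \phi(G,S) + \varepsilon$ for all large $n$, and passing first to the limit in $n$ and then in $\varepsilon$ yields the desired inequality.

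For the quotient inequality, my approach is a discrete co-area formula along the fibers of the projection $\pi : G \to G/N$. Given a finite $X \subset G$, define the multiplicity function $w : G/N \to \Z_{\ge 0}$ by $w(y) := |\pi^{-1}(y) \cap X|$, so that $\|w\|_1 = |X|$. The key identity is $\pi^{-1}(ys') = \pi^{-1}(y) \cdot s$ for every $s \in S$ with projection $s' \in S'$, which together with the triangle inequality gives
$$
\sum_{y \in G/N, \, s' \in S'} |w(ys') - w(y)| \. \le \. \sum_{g \in G, \, s \in S} |\mathbf{1}_X(gs) - \mathbf{1}_X(g)| \. = \. 2 \ts |E(X, G \sm X)|.
$$
Rewriting the left-hand side via the layer-cake formula as $\int_0^\infty 2\ts |E(Y_t, (G/N) \sm Y_t)| \ts dt$ for the superlevel sets $Y_t := \{w > t\}$, and using $\|w\|_1 = \int_0^\infty |Y_t| \ts dt$, a pigeonhole argument in $t$ produces some threshold with $|E(Y_t, (G/N) \sm Y_t)|/(k|Y_t|) \le |E(X, G \sm X)|/(k|X|)$. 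Taking the infimum over $X$ gives $\phi(G/N, S') \le \phi(G, S)$.

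The one place I expect a small amount of care is the co-area slicing: since the pushed-forward function $w$ is not $\{0,1\}$-valued, one must extract an honest subset of $G/N$ from the weighted boundary inequality via the superlevel-set trick, because the Cheeger constant is defined as an infimum over sets rather than over nonnegative functions. Everything else is routine bookkeeping, in line with the paper's treatment of the analogous propositions for asymptotic entropy and critical percolation as ``straightforward.''
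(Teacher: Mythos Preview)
Your proposal is correct. The paper does not actually supply a proof of this proposition; it only remarks that ``the proof of this result is straightforward,'' so there is nothing detailed to compare against. Your two verifications are exactly the kind of routine checks the paper is gesturing at: the semi-continuity step is the standard ``transport a near-optimal finite witness across the ball isomorphism'' argument, and your quotient inequality via pushing $\mathbf{1}_X$ forward along the fibers of $\pi$ and then slicing with the layer-cake formula is the usual route (equivalently, one recognizes $\phi$ as the $\ell^1$-Sobolev constant for finitely supported nonnegative functions, notes that summing over fibers preserves the $\ell^1$-norm and contracts the gradient seminorm, and then invokes co-area to pass back to sets). The one point you flagged --- that the pushed-forward function is integer-valued rather than $\{0,1\}$-valued, so a superlevel-set extraction is needed --- is indeed the only place requiring a line of thought, and you handle it correctly.
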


The proof of this result is straightforward.  By analogy with Lemma~\ref{l:radius-strict},
one can ask if $\phi$ is strictly increasing?  Perhaps even sharply increasing?
Unfortunately, this remains open:

\begin{conj}
\label{conj:isop-strict}
Let \ts $G=\<S\>$, let $N$ be a finitely generated normal subgroup of~$G$,
and let $S'$ be the projection of $S$ onto~$G/N$. Then \ts $\phi(G,S) < \phi(G/N,S')$
\. \underline{if and only if} \. $N$ is nonamenable.
\end{conj}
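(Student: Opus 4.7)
The plan is to prove the two directions of the iff separately, both via a fibration-style analysis of the projection $\pi\colon \text{Cay}(G,S) \toto \text{Cay}(G/N,S')$. Proposition~\ref{p:isop-mono} already gives $\phi(G,S) \le \phi(G/N,S')$, so the remaining tasks are (A) upgrade to equality when $N$ is amenable, and (B) upgrade to a strict gap when $N$ is nonamenable.

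For direction (A), I would attempt a Folner-lifting construction. Given $\epsilon>0$ and a near-optimal set $Y \subset G/N$ with $|E(Y,Y^c)| \le (\phi(G/N,S')+\epsilon)\ts k\ts |Y|$, choose a transversal $T=\{t_y : y\in Y\} \subset G$ and define $X = T\cdot F$ for a suitable finite $F \subset N$. Bookkeeping the edges out of $X$ with label $s \in S$ reduces to controlling two kinds of Folner defects of $F$: horizontal defects $|F\triangle Fs|$ for $s\in S\cap N$, and vertical defects of the form $|F\triangle c_{y,s}(s^{-1}Fs)|$ where $c_{y,s} = t_{ys'}^{-1}t_y s \in N$ is the cocycle of the transversal. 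Since $N\lhd G$, conjugation by each $s\in S$ is an automorphism of $N$, so in principle all of these defects can be made simultaneously small using amenability of $N$. However, conjugation can badly distort natural Folner sets — already in solvable Baumslag--Solitar groups $BS(1,n)$ — so establishing (A) in general will likely require a re-averaging argument tailored to the $G$-action on $N$ by automorphisms.

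For direction (B), I would run a fibration-decomposition argument. For any finite $X \subset G$ set $Y = \pi(X)$ and $X_c = X \cap \pi^{-1}(c)$, and split $|E(X,X^c)| = E_{\text{hor}} + E_{\text{ver}}$ according to whether the generator label lies in $S\cap N$ or in $S\setminus N$. The vertical contribution equals the edge boundary of $Y$ in $\text{Cay}(G/N,S')$ weighted by fiber sizes $|X_c|$; combining this with a level-set/coarea version of the Cheeger inequality for $\text{Cay}(G/N,S')$ should give $E_{\text{ver}} \ge \phi(G/N,S')\ts k\ts |X|$ up to controllable lower-order terms. A horizontal lower bound $E_{\text{hor}} \ge \delta\ts|X|$ with $\delta>0$ independent of $X$, coming from nonamenability of $N$ on each fiber, would then force the strict gap $\phi(G,S) \le \phi(G/N,S') - \delta/k$.

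The main obstacle is making this horizontal bound effective when $S\cap N$ does not generate $N$, and especially when $S \cap N = \emptyset$ (which happens for many natural generating sets). In that case $\text{Cay}(G,S)$ has no in-fiber edges at all, and one must generate in-fiber connectivity by replacing single edges with longer $S$-words whose product lies in $N$ — essentially a Schreier-like description of a Cayley graph of $N$ embedded inside $\text{Cay}(G,S)$. Converting nonamenability of $N$ on this auxiliary graph into a genuine edge-Cheeger bound for $E_{\text{hor}}$, without losing a factor proportional to the depth of the replacement words, is the technical crux. I would look to the enhancement/coupling strategy behind the Martineau--Severo Lemma~\ref{l:perc-strict}, which first bypassed the analogous difficulty for percolation, as the most promising template; transplanting that technique from percolation to the vertex-level Cheeger constant is where I expect the real difficulty of the conjecture to lie.
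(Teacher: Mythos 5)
The first thing to say is that the paper does not prove this statement: it appears as Conjecture~\ref{conj:isop-strict}, and the surrounding text says explicitly that the question ``remains open.'' So there is no proof of record to compare yours against; a complete argument here would be a new theorem, not a reconstruction. Your write-up is, by your own admission, a plan rather than a proof: both directions terminate at a step you flag as unresolved, and those are exactly the steps that make the problem open. In direction (A), producing a F{\o}lner set of $N$ that is simultaneously almost invariant under the finitely many cocycle translations \emph{and} almost fixed by the conjugation automorphisms $n \mapsto s^{-1}ns$ is the entire difficulty: amenability of $N$ gives no control over an infinite-order automorphism (your own $BS(1,n)$ example shows the distortion can be exponential), and no ``re-averaging'' tailored to the $G$-action is currently known. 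In direction (B), the horizontal bound $E_{\mathrm{hor}} \ge \delta\ts|X|$ is precisely a quantitative edge-isoperimetric inequality for the fibers inside $\mathrm{Cay}(G,S)$; when $S\cap N$ does not generate $N$ (in particular when it is empty), nonamenability of $N$ as an abstract group does not obviously yield such a $\delta$ uniform over all $X$, and the Martineau--Severo argument behind Lemma~\ref{l:perc-strict} is a percolation coupling with no known Cheeger-constant analogue. Naming these obstacles accurately is valuable, but it is not the same as overcoming them.

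Two sign issues are worth fixing even at the level of an outline. The paper declares $\phi$ an \emph{increasing} parameter, so Proposition~\ref{p:isop-mono} gives $\phi(G/N,S') \le \phi(G,S)$, not $\phi(G,S) \le \phi(G/N,S')$ as you assert at the outset; the inequality one wants to make strict when $N$ is nonamenable is therefore $\phi(G/N,S') < \phi(G,S)$ (the displayed inequality in the conjecture as printed appears to have its two sides transposed relative to ``strictly increasing''). Correspondingly, your own estimate in (B) --- $|E(X,X^c)| \ge \bigl(\phi(G/N,S')\ts k + \delta\bigr)|X|$ for every finite $X$ --- yields the \emph{lower} bound $\phi(G,S) \ge \phi(G/N,S') + \delta/k$, which is the correct target, not the upper bound $\phi(G,S) \le \phi(G/N,S') - \delta/k$ that you wrote; likewise the F{\o}lner lifting in (A) produces sets of small boundary in $\mathrm{Cay}(G,S)$ and hence an upper bound on $\phi(G,S)$, which combines with monotonicity to give equality. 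With those corrections your two-direction scheme is the natural line of attack, but as it stands it is a description of the difficulty rather than a resolution of it.
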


By analogy with the proof of our Theorem~\ref{t:main}, the \emph{if} part
of the conjecture gives:

\begin{prop}  \label{p:isop-main}
Conjecture~\ref{conj:isop-strict} implies that
there is a family of Cayley graphs
\. $\big\{(G_\om,S_\om)\big\}$, such that the set of values \.
$\big\{\phi(G_\om,S_\om)\big\}$ \. has cardinality
of the continuum.
\end{prop}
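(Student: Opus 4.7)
My plan is to follow the proof of Theorem~\ref{t:main} verbatim, with the Cheeger constant $\phi$ substituted for the generic strictly monotone parameter. The whole machinery in Sections~\ref{s:marked}--\ref{s:construction} was phrased for an abstract parameter satisfying the axioms of Section~\ref{ss:intro-mono}, so once we verify that $\phi$ fits into that framework, the conclusion is automatic.

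The first step is to observe that Proposition~\ref{p:isop-mono} already shows that $\phi$ is an \emph{increasing} parameter, i.e., it satisfies both the reversed quotient inequality~\eqref{eq:mono} and the reversed semi-continuity~\eqref{eq:mono-limit}. Under the assumption of Conjecture~\ref{conj:isop-strict}, the ``if'' direction asserts that for every normal subgroup $N\lhd G$ with $N$ nonamenable, one has $\phi(G/N,S')<\phi(G,S)$. This is precisely the strictness hypothesis needed to call $\phi$ a \emph{strictly increasing} parameter in the sense of Section~\ref{ss:intro-mono}. (The converse direction of the conjecture is not needed for this proposition; one only requires strictness on nonamenable quotients.)

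The second step is to apply Lemma~\ref{l:main-mono} to $\phi$. Unpacking the proof: we take the family $\{(G_J,S_J):J\subseteq\nn\}$ built in Lemma~\ref{l:main-construction}, which provides (a) a surjection $G_{J'}\toto G_J$ with nonamenable kernel whenever $J\subsetneq J'$ (by Lemma~\ref{lm:splitting}, the kernel contains $\bigoplus_{i\in J'\setminus J} N^{2^i}$), and (b) Chabauty continuity of $J\mapsto G_J$ (Lemma~\ref{lm:continuity}). Combining (a) with the conjectured strictness and (b) with the semi-continuity of $\phi$, the argument of Lemma~\ref{l:thm-2} produces a function $\mu_\phi:\nn\to\nn$ and a sparse infinite set $M=\{m_1<m_2<\cdots\}\subseteq\nn$ with $m_{n+1}>\mu_\phi(m_n)$, on which $J\mapsto\phi(G_J,S_J)$ is strictly decreasing in the lex order of $2^M$. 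Since $|2^M|=2^{\aleph_0}$, the image $\{\phi(G_J,S_J):J\in 2^M\}$ has cardinality of the continuum, completing the proof.

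There is no genuine obstacle inside the present proposition: every single ingredient is either already established (Propositions~\ref{p:isop-mono} and the constructions of Section~\ref{s:construction}) or assumed as hypothesis (Conjecture~\ref{conj:isop-strict}). The substitution of $\phi$ for the parameters treated in Theorem~\ref{t:main} is formal, exactly as in the parallel treatment of the increasing parameter $h$ in Lemma~\ref{l:entropy-strict}. The only nontrivial mathematics is thus packed into the conjecture itself; any eventual unconditional proof would have to either establish the strict Cheeger inequality on nonamenable quotients directly or find a substitute for it within the decorated Grigorchuk family.
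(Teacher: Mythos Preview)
Your proposal is correct and mirrors the paper's own approach exactly: the paper states this proposition as following ``by analogy with the proof of our Theorem~\ref{t:main}'' using only the \emph{if} part of Conjecture~\ref{conj:isop-strict}, and you have spelled out precisely that analogy via Proposition~\ref{p:isop-mono} and Lemma~\ref{l:main-mono}. One cosmetic slip: since $\phi$ is \emph{increasing}, the map $J\mapsto\phi(G_J,S_J)$ on $2^M$ is strictly \emph{increasing} (not decreasing) in the lex order, but this does not affect the cardinality conclusion.
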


The conclusion of the proposition remains an
open problem in the area.  The analogue of Theorem~\ref{t:isol}
for \ts $X_{\phi,k}$ \ts follow from the conjecture.
We omit the details.

\smallskip

\subsection{Speed of simple random walks} \label{ss:other-speed}
As in the introduction, let \ts $\{x_n\}$ \ts denote simple random walk
on \ts $\Ga = \text{Cay}(G,S)$ \ts starting at \ts $x_0=e$.  For all $z\in G$,
denote by \ts $|z|$ \ts the distance from~$e$ to~$z$ in~$\Ga$.
The \emph{speed} (also called \emph{drift} and \emph{rate of escape})
of \ts $\{x_n\}$ \ts is defined as follows:
$$
\si(G,S) \, := \, \lim_{n\to \infty} \. \frac{\mathbb{E} \ts [\ts |x_n| \ts ]}{n}\.,
$$
see e.g.~\cite[$\S$9.1]{Pete} and \cite[$\S$II.8]{Woe}.  Note that
the limit is well defined and the same for almost all sample paths.
Famously, it was shows in \cite{KL07} \ts that \ts $\si(G,S)>0$ \ts if and only if \ts
$h(G,S)>0$.   We also have:

\begin{prop}\label{p:speed-mono}
Speed \. $\si: \{(G,S)\} \to \rr_{\ge 0}$ \. is an increasing parameter.
\end{prop}

The proof of this result is straightforward.  Unfortunately, the following
natural analogue of Lemma~\ref{l:entropy-strict} remains open:

\begin{conj}
\label{conj:speed-strict}
Let \ts $G=\<S\>$ \ts and let $N$ be a finitely generated
normal subgroup of~$G$.  Suppose $N$ is nonamenable,
and let $S'$ be the projection of $S$ onto~$G/N$.
Then \ts $\si(G,S) < \si(G/N,S')$.
\end{conj}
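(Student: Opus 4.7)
The plan is to attack Conjecture~\ref{conj:speed-strict} by adapting the Kaimanovich proof of the strict entropy inequality (Lemma~\ref{l:entropy-strict}) to the word-length setting. The short exact sequence $1 \to N \to G \to G/N \to 1$ comes with $\pi: G \twoheadrightarrow G/N$ sending $S$ to $S'$, so the walk on $(G,S)$ projects to the walk on $(G/N,S')$, and the strategy is to compare $\sigma(G,S)$ and $\sigma(G/N,S')$ through the fibre geometry over the nonamenable kernel $N$.

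My first move is to fix a set-theoretic section $\tau: G/N \to G$ (for instance a Schreier transversal) and write every position of the walk as $x_n = \tau(\pi(x_n))\cdot r_n$, producing a fibre process $\{r_n\}$ on $N$ whose transition kernel depends on the current coset $\pi(x_n)$. Kingman's subadditive ergodic theorem should then give the asymptotic limits of $\mathbb{E}[d_G(e,x_n)]/n$, $\mathbb{E}[d_{G/N}(e,\pi(x_n))]/n$, and a suitable cocycle encoding the ``vertical length''. The key identity to aim for takes the form
\[
    \sigma(G/N,S') \; = \; \sigma(G,S) \; + \; \delta(G,N),
\]
where $\delta(G,N)\ge 0$ captures the linear-rate excess between the lifted quotient geodesic and the actual $G$-geodesic realising $x_n$.

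The core step would then be to argue that $\delta(G,N) > 0$ precisely when $N$ is nonamenable. Here I would invoke Kesten's criterion and Lemma~\ref{l:radius-strict}: nonamenability of $N$ gives $\rho(N)<1$, hence a spectral gap for the restricted fibre walk on $N$, and one would want to turn this spectral gap into a generic cancellation in $\mathrm{Cay}(G,S)$ — geodesic words representing $x_n$ should be strictly shorter than any reconstruction $\tau(\pi(x_n))\cdot r_n$ built from a shortest $(G/N,S')$-word plus an $N$-correction, with a deficit accumulating at a rate controlled by $1-\rho(N)$.

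The main obstacle, and the reason the conjecture is still open, is producing that generic geodesic shortcut rigorously. Entropy decomposes additively into marginal and conditional parts, so the nonamenable fibre contribution lands for free in the Kaimanovich argument; by contrast, word length is not additive along the extension $N\to G\to G/N$, and the naive pointwise bound $d_{G/N}(e,\pi(x_n))\le d_G(e,x_n)$ only yields monotonicity of the coarser, opposite form. Extracting the strict inequality $\sigma(G,S)<\sigma(G/N,S')$ therefore requires quantitative cancellation estimates in $\mathrm{Cay}(G,S)$ that are sensitive to $\rho(N)<1$ yet invisible to a coarse combing of $G$ by $\tau$, and no analogue of Kaimanovich's clean conditional-entropy identity is currently available in this purely metric setting; bridging this gap is the essential novel ingredient the proof would need.
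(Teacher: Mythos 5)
The first thing to note is that the paper does not prove this statement: Conjecture~\ref{conj:speed-strict} is explicitly presented as open (the authors say the natural analogue of Lemma~\ref{l:entropy-strict} for the speed ``remains open''), and it is only used conditionally, to deduce that a proof would yield continuum many values of $\si$. So there is no argument in the paper to measure yours against, and your proposal --- which you candidly frame as a strategy with the decisive step missing --- does not close the gap either. Your diagnosis of where the difficulty sits is essentially correct and matches the reason the problem is open: Kaimanovich's proof of the entropy statement rests on the exact additivity $\rH(\mu_n)=\rH(\pi_*\mu_n)+\mathbb{E}\bigl[\rH(\mu_n\mid\pi)\bigr]$, word length admits no such chain rule along $N\to G\to G/N$, and nobody currently knows how to convert nonamenability of the fibre (via $\rho(N)<1$) into a linear-rate geodesic saving in $\mathrm{Cay}(G,S)$. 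Identifying that obstruction is not the same as supplying the missing ingredient.

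There is also a concrete internal problem in your sketch beyond the admitted gap: your ``key identity'' $\si(G/N,S')=\si(G,S)+\delta(G,N)$ with $\delta\ge 0$ is oriented the wrong way. The projection $\pi$ is $1$-Lipschitz for the word metrics of $S$ and $S'$, so $d_{G/N}(e,\pi(x_n))\le d_G(e,x_n)$ pointwise and hence $\si(G/N,S')\le\si(G,S)$; any nonnegative defect term must therefore sit on the other side, $\si(G,S)=\si(G/N,S')+\delta$, with $\delta$ the linear rate of ``vertical'' displacement in the fibres. You notice this tension when you call the pointwise bound ``opposite,'' but you then build the decomposition in the impossible direction. (The inequality as printed in the conjecture, like the one in Lemma~\ref{l:entropy-strict}, is reversed relative to the paper's own convention that speed and entropy are \emph{increasing} parameters, i.e.\ $f(G/N,S')\le f(G,S)$; the substantive open question is whether $\si(G,S)>\si(G/N,S')$ strictly when $N$ is nonamenable.) A correct attack must show that the vertical rate $\delta$ is strictly positive --- for instance that the induced $N$-valued process escapes linearly --- and the passage from $\rho(N)<1$ for the section-dependent fibre kernel to such a drift estimate is exactly the step your outline leaves blank.
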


By analogy with the proof of our Theorem~\ref{t:main} and Proposition~\ref{p:isop-main} above,
we have:

\begin{prop}  \label{p:speed-main}
Conjecture~\ref{conj:speed-strict} implies that
there is a family of Cayley graphs
\. $\big\{(G_\om,S_\om)\big\}$, such that the set of values \.
$\big\{\si(G_\om,S_\om)\big\}$ \. has cardinality
of the continuum.
\end{prop}

\medskip

\subsection{Rate of exponential growth} \label{ss:other-exp}
Let \ts $b(n) := |\{z\in G \. : \. |z|\le n\}|$ \ts denote the number of elements
at distance at most~$n$, and define the \emph{rate of exponential growth} as follows:
$$
\ga(G,S) \, := \, \frac{1}{|S|} \. \lim_{n\to \infty} \. \frac{\log b(n)}{n}\..
$$
Clearly, \ts $\ga(G,S)\in [0,1)$, and \ts $\ga(G,S)>0$ \ts is independent of
the generating set~$S$.  See e.g.\ \cite{Gri-one,GP08,dlH00} for accessible introductions.

\begin{prop}\label{p:exp-mono}
Rate of exponential growth \. $\ga: \{(G,S)\} \to \rr_{\ge 0}$ \. is an increasing parameter.
\end{prop}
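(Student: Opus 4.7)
The strategy is to verify the two conditions in the definition of an increasing parameter, with the inequalities in \eqref{eq:mono} and \eqref{eq:mono-limit} reversed. The key tool is the classical subadditivity of the ball-size sequence.

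First I would check \eqref{eq:mono} in the reversed form. If $\pi:(G,S)\toto (G/N,S')$ is a surjection of marked groups, then any element of $G/N$ of length at most $n$ in the alphabet $S'$ is the image under $\pi$ of an element of $G$ of length at most $n$ in the alphabet $S$; hence $\pi$ maps the ball $B_G(n)$ onto $B_{G/N}(n)$. Consequently $b_{G/N}(n)\le b_G(n)$ for every $n$, and dividing by $n|S|$ and letting $n\to\infty$ gives $\gamma(G/N,S')\le \gamma(G,S)$.

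Next, I would establish the reversed form of \eqref{eq:mono-limit}, namely $\liminf_{n\to\infty}\gamma(G_n,S_n)\le\gamma(G,S)$ whenever $(G_n,S_n)\to(G,S)$ in the Chabauty topology. The engine here is Fekete's lemma: since every element of length at most $m+n$ can be written as a product of an element of length at most $m$ with one of length at most $n$, we have $b(m+n)\le b(m)\,b(n)$, so the sequence $\log b(n)$ is subadditive and
\[
\gamma(G,S)\,=\,\frac{1}{|S|}\,\inf_{r\ge 1}\frac{\log b_G(r)}{r}.
\]
By Chabauty convergence, for every fixed $r$ the balls of radius $r$ in $(G_n,S_n)$ and $(G,S)$ eventually coincide, so $b_{G_n}(r)=b_G(r)$ for all $n$ large enough. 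Then for each $r$,
\[
\frac{\log b_G(r)}{r}\,=\,\lim_{n\to\infty}\frac{\log b_{G_n}(r)}{r}\,\ge\,\liminf_{n\to\infty}\inf_{s\ge 1}\frac{\log b_{G_n}(s)}{s}\,=\,|S|\cdot\liminf_{n\to\infty}\gamma(G_n,S_n),
\]
using that $|S_n|=|S|$ for all $n$ (marked groups in the paper share a common generating-set size). Taking the infimum over $r$ on the left yields $\gamma(G,S)\ge \liminf_{n\to\infty}\gamma(G_n,S_n)$, as required.

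There is no real obstacle here: the only non-cosmetic ingredient is Fekete's lemma, which is needed because the defining limit $\lim_n \log b(n)/n$ is \emph{a priori} only a $\limsup$ along subsequences and could in principle jump up in the Chabauty limit. The infimum characterization eliminates this concern and makes the semicontinuity immediate, so Proposition \ref{p:exp-mono} follows.
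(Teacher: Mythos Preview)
Your argument is correct and is exactly the kind of routine verification the paper has in mind: the text simply asserts ``the proof of this result is straightforward'' and gives no details, so there is nothing to compare beyond noting that your Fekete/subadditivity computation is the standard way to see the semicontinuity in~\eqref{eq:mono-limit}. One cosmetic remark: since the paper literally says the inequalities in \eqref{eq:mono} and \eqref{eq:mono-limit} are ``reverted'', the increasing form of \eqref{eq:mono-limit} should read $\limsup_n \gamma(G_n,S_n)\le \gamma(G,S)$ rather than the $\liminf$ version you state; your proof already yields this stronger conclusion, because for each fixed $r$ one has $\gamma(G_n,S_n)\le \tfrac{1}{|S|}\tfrac{\log b_G(r)}{r}$ for all large~$n$, hence $\limsup_n \gamma(G_n,S_n)\le \tfrac{1}{|S|}\tfrac{\log b_G(r)}{r}$, and taking the infimum over $r$ finishes it.
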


The proof of this result is straightforward.  Clearly, the rate of exponential
growth is not strictly increasing.  In fact, the notion of strictly increasing
parameters for the rate of exponential growth is closely related to the notion
of \emph{growth tightness} introduced
in \cite[$\S$2]{GH} and established in \cite{AL02} for word hyperbolic groups.
In \cite{Ers04}, Erschler was able to modify the notion of ``strict monotonicity''
to obtain the following natural analogue of Theorem~\ref{t:main} for the rate
of exponential growth:

\begin{thm}[\cite{Ers04}]  \label{t:exp-main}
The set of rates of exponential growth  \.
$\big\{\ga(G,S)\big\}$ \. has cardinality
of the continuum.
\end{thm}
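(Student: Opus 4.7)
The plan is to adapt the general framework of Lemmas~\ref{l:main-mono} and~\ref{l:main-construction} to the rate of exponential growth, working around the fact that $\ga$ is \emph{not} strictly monotone on all marked groups (finite-index subgroups can have the same exponential growth rate as the ambient group). The key substitute is the \emph{growth tightness} theorem of Arzhantseva--Lysenok~\cite{AL02}: for a non-elementary word-hyperbolic group $\Gamma=\<S\>$ and any infinite normal subgroup $N\lhd \Gamma$, one has $\ga(\Gamma/N,S') < \ga(\Gamma,S)$ strictly. This provides exactly the kind of strict decrease that drives the Chabauty-topology argument in Section~\ref{s:construction}.

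First I would fix a non-elementary word-hyperbolic group $\Gamma$ (most simply, a free group $\F_k$ with $k\ge 2$) and a symmetric generating set $S$. Then I would pick a sequence $r_1, r_2, \ldots \in \Gamma$ whose lengths grow very rapidly, say $|r_{i+1}| \ge 2^{|r_i|}$, and which satisfy a strong small-cancellation condition (for instance $C'(\tfrac{1}{6})$) relative to each other, so that no $r_i$ lies in the normal closure of the rest and each partial quotient $\Gamma/\<\<r_j : j \in J\cap[n]\>\>$ remains non-elementary hyperbolic. For each $J \subseteq \nn$ set \. $\Gamma_J := \Gamma/\<\<r_i : i \in J\>\>$, marked by the image of~$S$.

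This family is set up so that (i) for $J \subsetneq J'$ there is a surjection $\Gamma_{J'} \toto \Gamma_J$ with infinite kernel, so growth tightness yields $\ga(\Gamma_{J'}) < \ga(\Gamma_J)$; and (ii) the rapid growth of $|r_i|$ ensures that the ball of radius $n$ in $\Gamma_J$ depends only on $J \cap [\lambda(n)]$ for some $\lambda: \nn \to \nn$, so $J \mapsto (\Gamma_J, S)$ is continuous from the Tychonoff topology on $2^\nn$ to the Chabauty topology on marked groups. With (i) and (ii) in hand, the upper semi-continuity argument of Lemma~\ref{lm:semicontunuous} and the sparse-subset extraction of Lemma~\ref{l:thm-2} transfer essentially verbatim, producing a sparse $M\subseteq \nn$ on which $J \mapsto \ga(\Gamma_J)$ is strictly decreasing with respect to the lex order on $2^M$, and hence takes continuum many distinct values.

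The main obstacle is the combinatorial group-theoretic choice of the relators~$r_i$: one must simultaneously ensure that (a) each enlargement $J \mapsto J\cup\{i\}$ genuinely produces an infinite kernel, so that growth tightness applies rather than just $\ga(\Gamma_{J'})\le\ga(\Gamma_J)$; and (b) hyperbolicity and the small-cancellation constants propagate uniformly along the entire tower of quotients as $i$ grows, so that the Arzhantseva--Lysenok theorem may be invoked at every stage. Both can be arranged by a careful greedy construction inside a free group using classical small-cancellation theory~\cite{AL02}, but this group-theoretic bookkeeping is where essentially all of the work in Erschler's argument sits.
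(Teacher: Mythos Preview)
The paper does not give its own proof of this theorem: it is stated purely as a citation of Erschler~\cite{Ers04}, with no argument supplied. There is therefore nothing in the paper to compare your proof against line by line. That said, the paper's surrounding discussion (growth tightness from~\cite{GH,AL02}, and the remark in~\S\ref{ss:finrem-small} that Erschler's construction is via small cancellation groups) makes clear that your outline is very much in the spirit of the original argument: build a family of small-cancellation quotients of a hyperbolic group indexed by $2^{\nn}$, use growth tightness as the substitute for strict monotonicity, and run the semi-continuity/sparse-subset extraction of Lemmas~\ref{lm:semicontunuous}--\ref{l:thm-2}.

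One slip to fix: with $\Gamma_J := \Gamma/\<\<r_i : i\in J\>\>$, the surjection for $J\subsetneq J'$ goes from $\Gamma_J$ onto $\Gamma_{J'}$ (adding relators gives a further quotient), not the other way around. Your inequality $\ga(\Gamma_{J'}) < \ga(\Gamma_J)$ is the correct one, but the arrow is reversed. Also, to invoke~\cite{AL02} at every stage you need each $\Gamma_J$ (not just the finite-stage quotients) to remain non-elementary word-hyperbolic; for infinite $J$ this requires the infinite small-cancellation theory rather than just $C'(1/6)$ on a finite presentation, so your condition~(b) is doing a bit more work than the phrasing suggests.
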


It is worth comparing this question with Grigorchuk's
celebrated result in \cite{Gri3}, that there is an uncountable family of
marked groups with incomparable growth functions.  The proof of
Theorem~\ref{t:exp-main} is also somewhat related to Bowditch's
elementary construction in \cite{Bow98}, of an uncountable family of
marked groups with pairwise non-quasi-isometric Cayley graphs.
See also the most recent result by Louvaris, Wise and Yehuda \cite{LWY24},
which proves that the set of (unscaled) growth rates of subgroups of the
free group $\F_k$ is dense on $[1,2k-1]$.

\medskip

\subsection{Connective constant} \label{ss:other-SAW}
Let $c(n)$ denote the number of \emph{self-avoiding walks} of length~$n$
in the Cayley graph \ts $\text{Cay}(G,S)$, and define the
\emph{connective constant} \ts as follows:
$$
\mu(G,S) \, := \, \lim_{n\to \infty} \. \sqrt[n]{\upsilon(n)} \..
$$
Here the limit exists by submultiplicativity: \ts $\upsilon(m+n) \le \upsilon(m) \ts \upsilon(n)$,
see e.g.\ \cite[$\S$1.2]{MS93}.  The exact value is known for the hexagonal
lattice \cite{DS12} and only few other special cases, see e.g.\ \cite[$\S$1.2]{GL19}.

\begin{thm}[{\cite[Cor.~4.1]{GL14}}{}]  \label{t:SAW-GL}
Connective constant \. $\mu:  \{(G,S)\} \to \rr_{\ge 1}$ \. is a strictly
increasing parameter.  Moreover, the inequality~\eqref{eq:mono} is strict
for all \ts $N\ne \mathbf{1}$.
\end{thm}

Now Lemma~\ref{l:main-mono} gives a new proof of the following known result:

\begin{thm}[{\cite{Mar17}}{}]  \label{t:SAW-Mar}
There is a family of \ts $4$-regular Cayley graphs
\. $\big\{\ts \text{\em Cay}(G_\om,S_\om)\ts\big\}$, such that the set
of connective constants \.
$\big\{\mu(G_\om,S_\om)\big\}$ \. has cardinality of the continuum.
\end{thm}

The original proof by Martineau \cite{Mar17} is similar in nature
to our approach but using weaker tools, namely \cite[Lemma~III.40]{dlH00}
which suffices for the case of connective constants (cf.~$\S$\ref{ss:finrem-par}).

\medskip

\section{Final remarks and open problems}\label{s:finrem}

\subsection{Historical notes} \label{ss:finrem-hist}
The results of this paper (for the spectral radius) were announced over seven
years ago.\footnote{Martin~Kassabov, \emph{A nice trick involving amenable groups},
MSRI talk (Dec.~9, 2016), video and slides available at \ts
\href{https://www.slmath.org/workshops/770/schedules/21638}{www.slmath.org/workshops/770/schedules/21638}}
We are happy that we could extend them to other monotone parameters, and
create an alternative to the small cancellation theory approach (see below).
In the meantime, our construction influenced \cite{TZ19} mentioned
in the introduction.
While this creates a messy timeline, we would like to
acknowledge that the asymptotic entropy version of Theorem~\ref{t:main}
can be attributed to Tamuz and Zheng at least as much as to this work.

Let us mention that the idea of taking products of marked groups
goes back to B.H.~Neumann \cite{Neu37}, and was repeatedly used in the
last two decades.  Besides \cite{KP}, let us single out
Pyber's \cite{Pyb04} (see also \cite{Pyb03})
with an uncountable family of groups related to
the \emph{Grothendieck problem}, Erschler's \cite{Ers06} with a
closely related construction based on decorated Grigorchuk groups,
and the most recent breakthrough \cite{BZ21} by Brieussel and Zheng.

\subsection{Graph theoretic aspects} \label{ss:finrem-graph}
In~\cite{LM06}, Leader and Markstr\"om constructed a simple uncountable family
of pairwise nonisomorphic $4$-regular Cayley graphs.  They were clearly
unaware of the earlier works by Grigorchuk~\cite{Gri3}, Bowditch~\cite{Bow98}
and Erschler~\cite{Ers04} which prove stronger results.  However,
the elementary nature of their construction is of independent interest.

\subsection{Computability aspects} \label{ss:finrem-comp}
In~\cite{Hag08}, H\"aggstr\"om showed that the critical probability \ts
$p^\site_c(\zz^2)$ \ts is \emph{computable} \ts in the sense of the
\emph{Church--Turing thesis}: there exists a Turing machine which computes
the digits in the binary expansion.  This resolved Toom's question.
H\"aggstr\"om then asked if \ts
$p_c(G,S)$ \ts is always computable (ibid., p.~323).  The negative
answer follows immediately from our Theorem~\ref{t:main}
and the observation that there are countably many Turing machines.

\subsection{Set theoretic aspects} \label{ss:finrem-sets}
As in the introduction, let \. $X_{\rho,k} \subset  (0,1]$ \. denote the
set of spectral radii of marked groups with $k$ generators, and let
\. $X_\rho :=\cup X_{\rho,k}$\ts.

\begin{OP}\label{op:rho}
We have: \. $X_\rho \. = \. (0,1]$.
\end{OP}

Between ourselves, we disagree whether one should believe or disbelieve this claim.
While our results seem to suggest a positive answer, they give no intuition
whether $X_\rho$ is closed or dense.

Now, Main~Theorem~\ref{t:main} shows that \ts $X_{\rho,8}$ \ts has cardinality
of the continuum.  Our proof implies a stronger result, that \ts $X_{\rho,8}$ \ts
has an embedding of the \emph{Cantor set} (see Lemma~\ref{l:main-construction}).
On the surface, this appears a stronger claim since there is a natural construction
of the \emph{Bernstein set} which has cardinality of the continuum and no embedding
of the Cantor set, see e.g.\ \cite[Ex.~8.24]{Kechris}.

Looking closer at our results, for even $k$, we prove in Theorem~\ref{t:isol}
that \ts $X_{\rho,k}$ \ts has no isolated points in the interval
\ts $[\al_k,1]$.  If \ts $X_{\rho,k}$ \ts is closed, then it is a perfect
Polish space that always contains the Cantor set, see e.g.\ \cite[Thm~6.2]{Kechris}.
In our case, it is easy to see that \ts $X_{\rho,k}$ \ts is a projection
of a Borel set, and thus \emph{analytic}, see e.g.\ \cite[$\S$14.A]{Kechris}.
Recall that every analytic set that is a subset of a Polish space either is countable,
or contains a Cantor set, see e.g.\ \cite[Ex.~14.13]{Kechris}.

In other words, very general set theoretic arguments imply that
set \ts $X_{\rho,k}$ \ts is either countable or contains a Cantor set,
and thus has the cardinality of the continuum.  This argument applies to
other monotone parameters in Theorem~\ref{t:main}, suggesting that containment of the Cantor set
is unsurprising.

\subsection{Explicit constructions} \label{ss:finrem-explicit}
The proof of Lemma~\ref{l:main-construction} is fundamentally set theoretic
and does not allow an \emph{explicit construction} \ts of the Cayley graph with a
transcendental spectral radius: \ts $\rho(G,S) \notin \ov \qqq$.  Here we
are intentionally vague about the notion of ``explicit construction'',
as opposed to the setting in graph theory where it is well defined,
see e.g.\ \cite[$\S$2.1]{HLW06}
and \cite[$\S$9.2]{Wig19}.  This leads to a host of interrelated open problems
corresponding to different possible meanings in our context.

\begin{question}\label{q:growth-transc}
Is there a finitely presented group with a transcendental spectral radius?
What about recursively presented groups?  Similarly, what about graph
automata groups?~\footnote{Soon after this paper was posted on the \ts {\tt arXiv},
a positive answer to the first two questions was given in~\cite{Bod24}.}
\end{question}

Despite our efforts, we are unable to resolve either of these questions
using our tools.
Note that there is a closely related but weaker notion of \emph{D-transcendental
cogrowth series}, see e.g.\ \cite{GH,GP17}. We refer to \cite{Pak18}
for some context about problems of counting walks in graphs
and further references.

\subsection{Asymptotic versions} \label{ss:finrem-inter}
When the group is amenable, one can ask about the asymptotics
of the return probability and the (closely related) \emph{isoperimetric profile}.
Similarly, when the Cayley graph has no non-constant bounded harmonic
functions, one can ask about the asymptotics of the speed and entropy
functions.  In these setting, there is a large literature on both the
exact and oscillating growth of these functions, too large to be reviewed
here.  We refer to \cite{BZ21} for the recent breakthrough and many
references therein.

For the critical probability, one can ask about asymptotics of the
number of cuts, see e.g.\ \cite{Tim07}.  We note that this asymptotic
version is always exponential for vertex-transitive graphs, and is
largely of interest for families of graphs with nearly linear growth.

\subsection{Critical probability on general graphs} \label{ss:finrem-perc}
The problem of describing critical probability constants on general
graphs was introduced by van den Berg \cite{vdB82}, who showed that
every \ts $p \in [0,1]$ \ts is a critical probability using a
probabilistic method.  Famously, Grimmett \cite{Gri83} showed
that natural subgraphs of \ts $\zz^2$ \ts can be used to obtain every
\ts $p \in [\frac12,1]$ \ts as a critical probability of
the bond percolation.   In a different direction, Ord, Whittington
and Wilker \cite{OWW84} construct a countable family of graphs using
decorations of~$\zz^2$, which has \ts $p_c^\bond$ \ts dense
on~$(0,1)$.  While neither of these constructions is vertex-transitive,
they suggest the following

\begin{question}  \label{q:perc-bond}
Let \ts $Y:= \cup_k \ts X_{p_c^\bond,k}$ \ts denote the set of critical probabilities
of all Cayley graphs of bounded degree.  Does there exist a constant \ts
$\al>0$ \ts such that \ts  $Y\cap (0,\al)$ \ts is dense?  Does there exist
a constant \ts $\be >0$ \ts such that \ts  $(0,\be) \ssu Y\ts$?
\end{question}

\subsection{Monotone properties} \label{ss:finrem-par}
The notion of \emph{monotone properties} are modeled after standard
notions of monotone and hereditary properties in probabilistic
combinatorics, which describe set systems closed under taking
subsets.  Typical examples include properties of graphs that are
invariant under deletion of edges or vertices, see e.g.\
\cite[$\S$6.3,~$\S$17.4]{AS16}.
Similarly, \emph{parameters} are standard in graph theory, and
describe any of the numerous quantitative graph functions,
see e.g.\ \cite{Bon95,Lov12}.

Finally, note that both critical probabilities and connective
constants are \emph{completely monotone}, i.e.\ the
inequality~\eqref{eq:mono} is strict for \emph{all} nontrivial~$N$
(see Lemma~\ref{l:perc-strict} and Theorem~\ref{t:SAW-GL}).
For the completely monotone parameters, the proof of Theorem~\ref{l:main-mono}
substantially simplifies (cf.~\cite{Mar17}), while Theorem~\ref{t:isol-generic}
is no longer valid.  As we mentioned in the introduction, it is false
for critical probabilities.

\subsection{Small cancellation groups} \label{ss:finrem-small}
There is an alternative approach to monotone parameters coming
from the small cancellation theory.  Notable highlights include
Bowditch's work \cite{Bow98} mentioned in~$\S$\ref{ss:other-exp},
and Erschler's Theorem~\ref{t:exp-main}.

After the results of this paper were obtained, Erschler showed
us how to prove both Lemma~\ref{l:main-construction} and
Theorem~\ref{t:main} using the construction from \cite{Ers04}
combined with strictly monotone properties.\footnote{Anna Erschler,
personal communication.}
Moreover, Osin suggested that this approach could also
be used to have groups satisfy additional properties, such as acylindrically
hyperbolic, lacunary hyperbolic, and  property~(T).\footnote{Denis Osin,
personal communication.}
It would be interesting to see how much further this
construction can be developed.
We note, however, that our approach is nearly
self-contained (modulo some lemmas proved in \cite{KP}).
Note also that Theorem~\ref{t:isol} seems not attainable
via small cancellation groups.

\vskip.6cm

\subsection*{Acknowledgements}
We are very grateful to Jason Bell, Corentin Bodart,
Nikita Gladkov, Slava Grigorchuk, Tom Hutchcroft,
Vadim Kaimanovich, Anders Karlsson, Andrew Marks,
Justin Moore, Tatiana Nagnibeda, G\'abor Pete,
Omer Tamuz, Romain Tessera
and Tianyi Zheng, for many interesting remarks and help with
the references.  Special thanks to Anna Erschler and Denis Osin
for telling us about the small cancelation group approach.
Both authors were partially supported by the NSF.


\newpage

\end{document}